\begin{document}

%%%%%%%%%%%%%%%%%%%%%%%%%%%%%%%%%%%%%%%%%%%%%%%%%%%%%%%%%%%%%%%%%%%%%%%%
%%%%%%%%%%%%%%%%%%%%%%%%%%%%%%%%%%%%%%%%%%%%%%%%%%%%%%%%%%%%%%%%%%%%%%%%
\title[Unitary embeddings of finite loop spaces]
{Unitary embeddings of finite loop spaces}
%%%%%%%%%%%%%%%%%%%%%%%%%%%%%%%%%%%%%%%%%%%%%%%%%%%%%%%%%%%%%%%%%%%%%%%%
%%%%%%%%%%%%%%%%%%%%%%%%%%%%%%%%%%%%%%%%%%%%%%%%%%%%%%%%%%%%%%%%%%%%%%%%

\author{Jos\'e Cantarero}
\email{cantarero@cimat.mx}
\address{Consejo Nacional de Ciencia y Tecnolog\'ia \\
Centro de Investigaci\'on en Matem\'aticas, A.C. Unidad M\'erida \\
Parque Cient\'ifico y Tecnol\'ogico de Yucat\'an \\
Carretera Sierra Papacal-Chuburn\'a Km 5.5 \\
M\'erida, YUC 97302 \\
Mexico.}

\author{Nat\`alia Castellana}
\email{natalia@mat.uab.cat}
\address{Departament de Matem\`atiques\\
         Universitat Aut\`onoma de Barcelona\\
         Edifici Cc\\
         E--08193 Bellaterra\\
         Spain.}

\subjclass[2010]{55R35, (primary), 20D20, 20C20 (secondary)}
\keywords{$p$-local, $p$-compact, fusion system}
\thanks{Both authors are partially supported by FEDER/MEC grant MTM2010-20692.}

%%%%%%%%%%%%%%%%%%%%%%%%%%%%%%%%%%%%

\newcommand{\A}{{\mathcal A}}
\newcommand{\B}{{\mathcal B}}
\newcommand{\C}{{\mathbb C}}
\newcommand{\Ca}{{\mathcal C}}
\newcommand{\F}{{\mathbb F}}
\newcommand{\G}{{\mathcal G}}
\newcommand{\I}{{\mathbb I}}
\newcommand{\K}{{\mathbb K}}
\renewcommand{\H}{{\mathcal H}}
\newcommand{\N}{{\mathbb N}}
\newcommand{\Or}{{\mathcal O}}
\newcommand{\Q}{{\mathbb Q}}
\newcommand{\R}{{\mathbb R}}
\newcommand{\T}{{\cal T}}
\newcommand{\W}{{\mathbb W}}
\newcommand{\Z}{{\mathbb Z}}

\newcommand{\Ab}{\operatorname{Ab}\nolimits}
\newcommand{\Aut}{\operatorname{Aut}\nolimits}
\newcommand{\holim}{\operatorname{holim}}
\newcommand{\Hom}{\operatorname{Hom}\nolimits}
\newcommand{\ho}{\operatorname{ho}\nolimits}
\newcommand{\hofib}{\operatorname{hofib}\nolimits}
\newcommand{\HoTop}{\operatorname{HoTop}\nolimits}
\newcommand{\im}{\operatorname{Im}\nolimits}
\newcommand{\id}{\operatorname{id}\nolimits}
\newcommand{\ind}{\operatorname{ind}\nolimits}
\newcommand{\Inj}{\operatorname{Inj}\nolimits}
\newcommand{\Inn}{\operatorname{Inn}\nolimits}
\newcommand{\Irr}{\operatorname{Irr}\nolimits}
\newcommand{\Iso}{\operatorname{Iso}\nolimits}
\newcommand{\Ker}{\operatorname{Ker}\nolimits}
\newcommand{\Map}{\operatorname{Map}\nolimits}
\newcommand{\Mod}{\operatorname{Mod}\nolimits}
\newcommand{\Mor}{\operatorname{Mor}\nolimits}
\newcommand{\Ob}{\operatorname{Ob}\nolimits}
\newcommand{\Out}{\operatorname{Out}\nolimits}
\newcommand{\Reg}{\operatorname{Reg}\nolimits}
\newcommand{\Rep}{\operatorname{Rep}\nolimits}
\newcommand{\res}{\operatorname{res}\nolimits}
\newcommand{\rk}{\operatorname{rk}\nolimits}
\newcommand{\Spin}{\operatorname{Spin}\nolimits}
\newcommand{\Syl}{\operatorname{Syl}\nolimits}
\newcommand{\Top}{\operatorname{Top}\nolimits}
\newcommand{\Tor}{\operatorname{Tor}\nolimits}

\newcommand{\higherlim}[2]{\displaystyle\setbox1=\hbox{\rm lim}
	\setbox2=\hbox to \wd1{\leftarrowfill} \ht2=0pt \dp2=-1pt
	\setbox3=\hbox{$\scriptstyle{#1}$}
	\def\test{#1}\ifx\test\empty
	\mathop{\mathop{\vtop{\baselineskip=5pt\box1\box2}}}\nolimits^{#2}
	\else
	\ifdim\wd1<\wd3
	\mathop{\hphantom{^{#2}}\vtop{\baselineskip=5pt\box1\box2}^{#2}}_{#1}
	\else
	\mathop{\mathop{\vtop{\baselineskip=5pt\box1\box2}}_{#1}}%
	\nolimits^{#2}
	\fi\fi}
	
\newcommand{\hocolim}[2]{\displaystyle\setbox1=\hbox{\rm hocolim}
	\setbox2=\hbox to \wd1{\rightarrowfill} \ht2=0pt \dp2=-1pt
	\setbox3=\hbox{$\scriptstyle{#1}$}
	\def\test{#1}\ifx\test\empty
	\mathop{\mathop{\vtop{\baselineskip=5pt\box1\box2}}}\nolimits^{#2}
	\else
	\ifdim\wd1<\wd3
	\mathop{\hphantom{^{#2}}\vtop{\baselineskip=5pt\box1\box2}^{#2}}_{#1}
	\else
	\mathop{\mathop{\vtop{\baselineskip=5pt\box1\box2}}_{#1}}%
	\nolimits^{#2}
	\fi\fi}

% p-lfgs declarations

\newcommand{\Ff}{{\mathcal{F}}}
\newcommand{\Ll}{{\mathcal{L}}}
\newcommand{\pcom}{^\wedge_p}

%Declarations
\theoremstyle{plain}
\newtheorem{theorem}{Theorem}[section]
\newtheorem*{introtheorem}{Theorem}
\newtheorem*{introproposition}{Proposition}
\newtheorem*{introcorollary}{Corollary}
\newtheorem{proposition}[theorem]{Proposition}
\newtheorem{corollary}[theorem]{Corollary}
\newtheorem{lemma}[theorem]{Lemma}

\theoremstyle{definition}
\newtheorem{definition}[theorem]{Definition}

\theoremstyle{remark}
\newtheorem{remark}[theorem]{Remark}
\newtheorem{example}[theorem]{Example}

\begin{abstract}
In this paper we construct faithful representations of
saturated fusion systems over discrete $p$-toral groups 
and use them to find conditions that guarantee the 
existence of unitary embeddings of $p$-local compact groups.
These conditions hold for the Clark-Ewing and Aguad\'e-Zabrodsky $p$-compact
groups as well as some exotic $3$-local compact groups. We also 
show the existence of unitary embeddings of finite loop spaces.
\end{abstract}

\maketitle

%%%%%%%%%%%%%%%%%%%%%%%%%%%%%%%%%%%%%%%%%%%%%%%%%%%%%%%%%%%%%%%%%%%%
\section*{Introduction}
%%%%%%%%%%%%%%%%%%%%%%%%%%%%%%%%%%%%%%%%%%%%%%%%%%%%%%%%%%%%%%%%%%%%

In the theory of compact Lie groups, the existence of a faithful
unitary representation for every compact Lie group is a consequence
of the Peter-Weyl theorem. This paper is concerned with the existence
of analogous representations for several objects in the literature
which are considered to be homotopical counterparts of compact Lie groups.

In 1994, W.G.~Dwyer and C.W.~Wilkerson \cite{DW} introduced $p$-compact groups. 
They are loop spaces which satisfy some finiteness properties at a particular
prime $p$. For example, if $G$ is a compact Lie group such that its
group of connected components is a finite $p$-group, then its
$p$-completion $G \pcom $ in the sense of Bousfield-Kan \cite{BK} is a $p$-compact group. 
But there are examples of $p$-compact groups which are not the $p$-completion 
of any compact Lie group, which are called exotic. Connected $p$-compact groups 
were classified in \cite{AG} and \cite{AGMV}, where a bijective correspondence 
between connected $p$-compact groups and reflection data over the $p$-adic
integers was established.

Many ideas from the theory of compact Lie groups have a homotopical
analogue for $p$-compact groups. Faithful unitary representations 
correspond to homotopy monomorphisms at the prime $p$ from the 
classifying space $BX$ of a $p$-compact group into $BU(n) \pcom$
for some $n$. A homotopy monomorphism at $p$ is map $g$ such that
the homotopy fiber $F$ of $g \pcom$ is $B\Z/p$-null, that is, 
the evaluation map $\Map(B\Z/p,F) \to F$ is a homotopy equivalence. 
For simplicity, we will call such maps $BX \to BU(n) \pcom$ 
unitary embeddings. The existence of such maps follows from the Peter-Weyl theorem, the 
classification of connected $p$-compact groups and the works \cite{C2}, \cite{C1} 
for $ p > 2 $ and \cite{Z1}, \cite{Z15}, \cite{Z2} for $ p = 2 $. 

In this article we deal with the same question for
the combinatorial structures called $p$-local compact groups,
which encode the $p$-local information of some spaces at a prime 
$p$. They were introduced in \cite{BLO3} to model $p$-completed classifying
spaces of compact Lie groups, $p$-compact groups, as well as linear
torsion groups, and they have been shown recently to model $p$-completions 
of classifying spaces of finite loop spaces \cite{BLO4} and other exotic examples \cite{G}. 

These structures generalize $p$-local finite groups \cite{BLO2}. In fact, 
they are given by a fusion system $\Ff$ over a discrete $p$-toral group $S$ 
and an associated centric linking system $\Ll$. Recall that a fusion system
$\Ff$ over $S$ is category whose objects are the subgroups of $S$ and whose morphisms
$\Hom_{\Ff}(P,Q)$ satisfy certain properties. More details can be found in 
Section \ref{Background} of this article. After introducing the notion of
homotopy monomorphism at the prime $p$ in Section \ref{Monomorphisms} and
studying some of its properties, we construct complex representations of $S$ 
which are faithful and $\Ff$-invariant in Section \ref{Representations}. A 
representation $\rho$ is fusion-preserving or $\Ff$-invariant if for any $P \leq S $ and any morphism $ f $ in 
$\Hom_{\Ff}(P,S)$, the representations $\rho_{|P}$ and $\rho_{|f(P)} \circ f$
are isomorphic. The following is Theorem \ref{FaithfulFusion}, the main theorem 
in this section. 

\begin{introtheorem}
Let $\Ff$ be a saturated fusion system over a discrete $p$-toral
group $S$. There exists a faithful unitary representation of $S$
which is $\Ff$-invariant.
\end{introtheorem}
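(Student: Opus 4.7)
The plan is in two stages: first build a faithful (not yet fusion-invariant) unitary representation of $S$, then symmetrize it over $\Ff$ to obtain $\Ff$-invariance while preserving faithfulness.

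\textbf{Stage 1: Faithful representation.} Write $S$ as an extension $1 \to T \to S \to \pi \to 1$, where $T \cong (\Z/p^\infty)^r$ is the maximal discrete torus and $\pi$ is a finite $p$-group. The torus admits a faithful character $\chi_T\colon T \to U(r)$ given by summing the coordinate embeddings $\Z/p^\infty \hookrightarrow U(1)$. Lifting a faithful representation of the finite group $\pi$ along $S \twoheadrightarrow \pi$ produces $\bar\sigma\colon S \to U(k)$ with kernel exactly $T$. Setting
$$\rho_0 := \bar\sigma \oplus \Ind_T^S(\chi_T),$$
which is finite-dimensional since $[S:T] = |\pi| < \infty$, yields a faithful unitary representation of $S$: $\bar\sigma$ separates elements with nontrivial image in $\pi$, while the restriction of $\Ind_T^S(\chi_T)$ to $T$ is a direct sum of $S$-conjugates of $\chi_T$, each faithful on $T$.

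\textbf{Stage 2: Fusion-invariance.} Invoke a version of Alperin's fusion theorem for saturated fusion systems over discrete $p$-toral groups: there is a finite family $\{P_0 = S, P_1, \ldots, P_n\}$ of representatives of the $\Ff$-conjugacy classes of $\Ff$-centric radical subgroups; each $\Out_\Ff(P_i)$ is finite; and every morphism in $\Ff$ is a composition of inclusions and restrictions of elements of the groups $\Aut_\Ff(P_i)$. Consequently, $\Ff$-invariance of a representation of $S$ reduces to $\Aut_\Ff(P_i)$-invariance of each restriction $\rho|_{P_i}$. Choose lifts $\tilde O_i \subset \Aut_\Ff(P_i)$ of $\Out_\Ff(P_i)$ and form
$$\rho := \bigoplus_{i=0}^{n}\ \bigoplus_{\alpha \in \tilde O_i} \Ind_{P_i}^S\bigl(\rho_0|_{P_i} \circ \alpha\bigr).$$
The $(i, \alpha) = (0, \id)$ summand equals $\Ind_S^S(\rho_0) = \rho_0$, so faithfulness of $\rho$ is automatic.

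\textbf{Main obstacle.} The delicate step is checking that $\rho$ really is $\Ff$-invariant. Local $\Aut_\Ff(P_i)$-invariance on each essential subgroup is built into the summation indexed by $\tilde O_i$, but contributions of the form $\Ind_{P_j}^S(\,\cdot\,)|_{P_i}$ for $j \neq i$ decompose via a Mackey double-coset formula into induced pieces whose combined invariance under $\Aut_\Ff(P_i)$ is not immediate. Establishing this—possibly by refining the construction, iterating the averaging, or else working at the level of characters and producing an $\Ff$-stable non-negative integer combination of irreducible characters of $S$ containing $\chi_{\rho_0}$ as a subcharacter—is the core technical point. A secondary ingredient is the finiteness of the indexing set: that the collection of $\Ff$-conjugacy classes of $\Ff$-centric radical subgroups, and each $\Out_\Ff(P_i)$, are finite is a structural consequence of saturation in the discrete $p$-toral setting, and is essential for $\rho$ to remain finite-dimensional.
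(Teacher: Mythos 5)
Your Stage 1 is fine, and your reduction of $\Ff$-invariance to $\Aut_{\Ff}(P_i)$-invariance on the centric radical subgroups (the paper's Remark \ref{AlperinCharacters}) is correct. But the gap you flag in Stage 2 is a genuine one, and it is not a technicality that further averaging will fix: it is the entire content of the theorem. The elements of $\Aut_{\Ff}(P_i)$ are abstract automorphisms, not realized by conjugation in $S$, so they do not act on the double cosets $P_i\backslash S/P_j$ appearing in the Mackey decomposition of $\Ind_{P_j}^S(\cdot)|_{P_i}$; there is no reason for them to permute the Mackey constituents, and in general they do not. Even the $j=i$ summands are not invariant: $\Ind_{P_i}^S(\sigma\circ\alpha)|_{P_i}$ contains pieces induced from $P_i\cap{}^gP_i$ for $g\notin P_i$, and precomposing with $\varphi\in\Aut_{\Ff}(P_i)$ scrambles these in a way the sum over $\tilde O_i$ does not control. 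So your claim that local invariance ``is built into the summation'' already fails, and the construction as written does not produce an $\Ff$-invariant representation.

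The idea you are missing is the paper's way of killing the Mackey problem before it arises: induce only from the maximal torus $T$, and arrange for the character of the representation $\rho$ of $T$ to vanish on $P\cap T - P_0$ for every centric radical $P$. Then the induced character $\chi_{\ind_T^S\rho}$ vanishes on all of $Q - Q_0$ for each centric radical $Q$ (it vanishes off $T$ automatically, and on $Q\cap T - Q_0$ by the imposed condition), so fusion-invariance only has to be checked on the identity components $Q_0\leq T$. There, Lemma \ref{WeylGroup} (Lemma 2.4 of \cite{BLO3}) says every $\Ff$-morphism between subgroups of $T$ extends to the \emph{finite} group $\Aut_{\Ff}(T)$, over which one can genuinely symmetrize (the paper tensors the $\Aut_{\Ff}(T)$-translates of the vanishing representations from Lemma \ref{EachSubgroup}, then tensors with a sum of translates of a faithful character of $T$). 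Faithfulness then comes for free because inducing a faithful representation of the normal subgroup $T$ yields a faithful representation of $S$; your separate Stage 1 is unnecessary. Since two unitary representations of a locally finite group are isomorphic iff their characters agree (Corollary B.6 of \cite{Z15}), the character-level vanishing argument suffices. Without some device of this kind that confines all the fusion to a single group over which averaging is legitimate, the symmetrization strategy cannot be completed.
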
  

The importance of such representations comes from the maps $\Psi_n$
introduced in Section \ref{Embeddings} which take homotopy classes of maps $ |\Ll| \pcom \to BU(n) \pcom $ to 
its restriction to $BS$. This restriction map gives $n$-dimensional 
complex representations of $S$ which must be $\Ff$-invariant. We 
show that if a faithful fusion-preserving representation of $S$ is the image of
a map $f$ under $\Psi_n$, then $f$ is a unitary embedding. We also
say that $f$ is a unitary embedding of the $p$-local compact group $(S,\Ff,\Ll)$.

The problem is then reduced to studying the obstructions for a
(faithful) representation $\rho$ to be in the image of $\Psi _n$
coming from Wojktowiak's obstruction theory \cite{W}. These obstructions 
lie in the cohomology of the orbit category of centric radical 
subgroups with certain functors as coefficients. For our purpose it is 
enough to know whether $m\rho$ is in the image of $\Psi _{mn}$
for some $m > 0$. By stabilizing, we find that the obstructions in
even dimensions vanish and in odd dimensions we can replace our
original functors by the functor which takes a group $P$ to 
$R(P,\rho) \otimes _{\Z} \Z \pcom$, where $R(P,\rho)$ is the 
Grothendieck ring of subrepresentations of $\rho _{|P}$.

The orbit category of centric radical subgroups has finite length (see
Definition \ref{Length}) and the obstructions vanish above the length. 
Thus when the length of this category is smaller than three there are no obstructions, 
from where we obtain the following theorem, which corresponds to 
Corollary \ref{LowDepth} in the text. 

\begin{introtheorem}
Let $(S,\Ff,\Ll)$ be a $p$-local compact group such that $l(\Or(\Ff^{cr})) < 3 $.
Then there exists a unitary embedding of $(S,\Ff,\Ll)$.
\end{introtheorem}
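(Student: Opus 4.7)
The plan is to apply the machinery already assembled in the introduction. Begin by invoking the first displayed theorem to obtain a faithful $\Ff$-invariant unitary representation $\rho: S \to U(n)$. The strategy is not to attempt lifting $\rho$ itself through $\Psi_n$, but rather to show that some multiple $m\rho$ lies in the image of $\Psi_{mn}$. Any preimage $f: |\Ll|\pcom \to BU(mn)\pcom$ of $m\rho$ will then be a unitary embedding of $(S,\Ff,\Ll)$: the representation $m\rho$ has the same kernel as $\rho$ and is therefore still faithful, and faithfulness together with membership in the image of $\Psi_{mn}$ is exactly the criterion for a unitary embedding explained in Section \ref{Embeddings}.

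The next step is to analyze, via Wojtkowiak's obstruction theory, the obstructions to $m\rho$ lying in the image of $\Psi_{mn}$. These obstructions sit in cohomology groups $H^i(\Or(\Ff^{cr}); F_i)$, where $F_i$ are functors built from the homotopy groups of mapping spaces into $BU(mn)\pcom$. As sketched in the introduction, after replacing $\rho$ by a sufficiently large multiple $m\rho$, the even-degree obstructions are killed, while the odd-degree obstructions can be identified with classes in $H^{\mathrm{odd}}(\Or(\Ff^{cr}); R(-,\rho) \otimes_{\Z} \Z\pcom)$.

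The final step is to invoke the length hypothesis. By definition of $l(\Or(\Ff^{cr}))$, higher limits over this orbit category vanish in cohomological degrees strictly above the length, so the assumption $l(\Or(\Ff^{cr})) < 3$ forces all such limits to vanish in degrees $\geq 3$. Since the only obstructions that survive stabilization sit in odd cohomological degrees, and the smallest odd integer that is $\geq 3$ is $3$ itself, every surviving obstruction class automatically vanishes. Hence the lift $f$ exists, and composing its construction produces the desired unitary embedding.

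The difficulty of the argument is not concentrated in this corollary but entirely in the preceding ingredients: the construction of the faithful $\Ff$-invariant $\rho$, and above all the stabilization step that simultaneously annihilates even-degree obstructions and reduces the odd-degree coefficient systems to the representation-ring functor $R(-,\rho) \otimes_{\Z} \Z\pcom$. Given those results, the present corollary follows by the numerical observation that the only candidate degree for a surviving obstruction is $\geq 3$, which is excluded by the length hypothesis.
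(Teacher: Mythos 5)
Your proposal is correct and follows essentially the same route as the paper: construct a faithful $\Ff$-invariant representation via Theorem \ref{FaithfulFusion}, stabilize so that the even-degree obstructions vanish and the odd-degree ones land in higher limits of $R(-,\rho)\otimes_{\Z}\Z\pcom$ over $\Or(\Ff^{cr})$ (Theorem \ref{AlmostSurjective}), and then observe that these all sit in degrees $\geq 3 > l(\Or(\Ff^{cr}))$, where higher limits vanish by Proposition \ref{BoundedLimits}, so Proposition \ref{ExistenceEmbeddings} yields the embedding.
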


This is the case for the Clark-Ewing and the Aguad\'e-Zabrodsky $p$-compact groups, as
we show in Section \ref{pcompact}. The existence of unitary embeddings for these $p$-compact groups 
was already shown in \cite{C2} in a different way, but these results were never published.
We would also like to add the comment here that the existence of unitary embeddings 
for $p$-local finite groups was already shown in \cite{CCM}. 

In Section \ref{Examples} we show that if $X \simeq \Omega BX$ is a finite loop space,
then $BX \pcom$ has a unitary embedding using the existence of unitary embeddings for 
$p$-compact groups and the fact that the universal cover of $BX \pcom$ is the classifying 
space of a $p$-compact group. In this section we also show the existence of unitary 
embeddings for the exotic $3$-local compact groups constructed in \cite{G} using the 
results of Section \ref{Embeddings}. 

The following theorem is a consequence of the classification of $p$-compact
groups, the Peter-Weyl theorem, the existence of unitary embeddings of generalized
Grassmannians \cite{C1} and of $DI(4)$ \cite{Z1}, \cite{Z15}, \cite{Z2}, and the results
mentioned above. 
 
\begin{introtheorem}
Let $(S,\Ff,\Ll)$ be a $p$-local compact group which models a finite loop space
or a $p$-compact group. Then there exists a unitary embedding of $(S,\Ff,\Ll)$.
\end{introtheorem}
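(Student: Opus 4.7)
The plan is to treat the two cases of the theorem separately, in each case reducing to known results.

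For the $p$-compact group case, write $|\Ll|\pcom \simeq BX$ where $X$ is a $p$-compact group. Let $X_0$ be the identity component, a connected $p$-compact group, and set $\pi = \pi_0(X)$, which is a finite $p$-group, so there is a fibration $BX_0 \to BX \to B\pi$. Assuming a unitary embedding $\phi\colon BX_0 \to BU(n)\pcom$ is available, a homotopical version of representation-theoretic induction along this fibration produces a map $BX \to BU(n|\pi|)\pcom$ which is again a homotopy monomorphism at $p$; faithfulness is preserved because the kernel of an induced representation is the core in the ambient group of the original kernel, which is trivial for a faithful $\phi$. This reduces the problem to the connected case. By the classification of connected $p$-compact groups \cite{AG,AGMV}, $X_0$ is a product of the $p$-completion of a connected compact Lie group and finitely many exotic connected $p$-compact groups. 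The Lie factor admits a unitary embedding by the Peter-Weyl theorem followed by $p$-completion. At odd primes each exotic factor is of Clark-Ewing or Aguad\'e-Zabrodsky type, so Corollary \ref{LowDepth}, applied in Section \ref{pcompact}, supplies a unitary embedding (with \cite{C1},\cite{C2} as an alternative). At the prime $2$ the only exotic connected $p$-compact group is $DI(4)$, for which \cite{Z1},\cite{Z15},\cite{Z2} provide a unitary embedding. Combining the factor embeddings by direct sum of representations produces a unitary embedding of the product.

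For the finite loop space case, write $|\Ll|\pcom \simeq BX\pcom$ with $X \simeq \Omega BX$ and $BX$ of the homotopy type of a finite complex. As shown in Section \ref{Examples}, the fundamental group $\pi = \pi_1(BX\pcom)$ is a finite $p$-group and the universal cover $\widetilde{BX\pcom}$ is the classifying space $BY$ of a $p$-compact group $Y$. The previous case yields a unitary embedding $\psi\colon BY \to BU(n)\pcom$, and the same induction step applied to the fibration $BY \to BX\pcom \to B\pi$ extends $\psi$ to a unitary embedding of $BX\pcom$.

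The main technical obstacle is to verify that induction along a fibration with finite $p$-group base preserves the property of being a homotopy monomorphism at $p$; equivalently, that the homotopy fiber of the resulting map into a larger unitary classifying space remains $B\Z/p$-null. This should follow from the general behaviour of such monomorphisms under fibrations, developed in Section \ref{Monomorphisms}, combined with the observation that $B\Z/p$-nullity is preserved under extension by the classifying space of a finite $p$-group. By comparison, the classification-based reduction and the direct-sum assembly of factor embeddings are essentially formal once this preservation is in hand.
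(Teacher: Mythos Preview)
Your overall strategy matches the paper's: reduce to connected $p$-compact groups via a covering/fibration argument, invoke the classification there, and then pass back to the non-connected case. However, there is a factual gap in your enumeration of the exotic connected $p$-compact groups at odd primes. You assert that ``at odd primes each exotic factor is of Clark-Ewing or Aguad\'e-Zabrodsky type,'' but this is false: the generalized Grassmannians, corresponding to the imprimitive reflection groups $G(q,r,n)$, are exotic and are neither Clark-Ewing (their Weyl group order is in general divisible by $p$) nor among the four Aguad\'e-Zabrodsky examples $X_{12},X_{29},X_{31},X_{34}$. Their unitary embeddings are supplied by \cite{C1}, which the paper invokes explicitly; Corollary~\ref{LowDepth} does not cover them, since the length of their orbit categories is not bounded by $2$ in general. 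Without this input your connected case is incomplete.

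Your ``homotopical induction along the fibration $BX_0 \to BX \to B\pi$'' is the right idea but left as a sketch; the preservation of $B\Z/p$-nullity that you flag as the main technical point is not something Section~\ref{Monomorphisms} provides directly. The paper implements exactly this step as Proposition~\ref{RegularCovering}: given a finite regular cover $X \to Y$ with $X$ admitting a unitary embedding into $BU(N)\pcom$, one uses the Kahn--Priddy pretransfer to produce a map $Y\pcom \to (X \wr \Sigma_m)\pcom$, composes with the map to $BU(mN)\pcom$ induced by the block-diagonal inclusion $U(N) \wr \Sigma_m \hookrightarrow U(mN)$, checks that the induced map on Sylows is injective (via the Kaloujnine--Krasner embedding), and then appeals to Theorem~\ref{HomotopyMonomorphism}. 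This is precisely the induction you have in mind, made rigorous. Note also that in the finite loop space case the paper takes the universal cover of $BX$ (whose deck group $\pi_0(X)$ is finite but need not be a $p$-group) and then fiberwise $p$-completes, rather than the universal cover of $BX\pcom$; your formulation with $\pi$ a finite $p$-group is not quite the same setup.
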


Section \ref{pcompact} also contains a result of independent interest concerning
the relationship between the fusion systems of mapping spaces and
centralizer fusion systems. In general, if $Q$ is a fully centralized 
subgroup of $S$, it is not known whether the classifying space of the 
centralizer $p$-local compact group $(S,C_{\Ff}(Q),C_{\Ll}(Q))$ has 
the homotopy type of $\Map(BQ,|\Ll| \pcom)_{Bi}$. We have the following 
partial result, which corresponds to Proposition \ref{Centralizers}.

\begin{introproposition}
Let $X$ be a $p$-compact group, $S$ a maximal discrete $p$-toral subgroup and $\Ff$ the associated fusion
system over $S$. Let $E$ be a fully centralized subgroup of $Z(S)$. Then the fusion system $C_{\Ff}(E)$ 
is equal to the fusion system of the $p$-compact group $C_X(E)$ over $C_S(E)$.
\end{introproposition}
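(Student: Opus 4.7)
The plan is to verify both containments between the morphism sets of $C_{\Ff}(E)$ and the fusion system $\Ff_{C_S(E)}(C_X(E))$ associated to the $p$-compact group $C_X(E)$. Since $E \leq Z(S)$, every element of $S$ centralizes $E$, so $C_S(E) = S$; and by standard results on centralizers in $p$-compact groups, the hypothesis that $E$ is fully centralized in $\Ff$ implies that $S$ is a maximal discrete $p$-toral subgroup of $C_X(E)$. Thus both fusion systems are defined over the same group $S$ and have the same objects (all subgroups of $S$), reducing the statement to an equality of morphism sets $\Hom_{C_\Ff(E)}(P, Q) = \Hom_{\Ff_S(C_X(E))}(P, Q)$ for each pair $P, Q \leq S$.

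Fix a homomorphism $\varphi : P \to Q$. Using that $E$ is central in $S$, consider the two homomorphisms $\mu_P, \mu_\varphi : P \times E \to S$ given by $\mu_P(p, e) = pe$ and $\mu_\varphi(p, e) = \varphi(p) e$, with images $PE$ and $QE$. The exponential adjunction identifies $\Map(BP, \Map(BE, BX)_{Bi})$ with the appropriate component of $\Map(BP \times BE, BX)$, and under this identification the two relevant maps $BP \to BC_X(E)$ (from the inclusion $P \hookrightarrow S$ and from $\varphi$ composed with $Q \hookrightarrow S$) correspond respectively to $B\mu_P$ and $B\mu_\varphi$. Therefore $\varphi \in \Hom_{\Ff_S(C_X(E))}(P, Q)$ if and only if $B\mu_P \simeq B\mu_\varphi$ as maps $B(P \times E) \to BX$.

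The forward containment $C_{\Ff}(E) \subseteq \Ff_S(C_X(E))$ is immediate: any $\tilde\varphi \in \Hom_\Ff(PE, QE)$ extending $\varphi$ with $\tilde\varphi|_E = \id_E$ satisfies $\tilde\varphi \circ \mu_P = \mu_\varphi$, and since $\tilde\varphi$ is a morphism in $\Ff$, post-composition with $B\tilde\varphi$ preserves the homotopy class of the inclusion into $BX$, yielding $B\mu_P \simeq B\mu_\varphi$. For the reverse containment I would invoke the standard description of $[BG, BX]$ for $G$ a discrete $p$-toral group: two homomorphisms $\alpha, \beta : G \to S$ induce homotopic maps $BG \to BX$ if and only if there is an $\Ff$-isomorphism $\rho : \alpha(G) \to \beta(G)$ with $\rho \circ \alpha = \beta$. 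Applied to $\alpha = \mu_P$ and $\beta = \mu_\varphi$, this produces $\rho \in \Iso_\Ff(PE, QE)$ satisfying $\rho|_P = \varphi$ (evaluate at $(p, 1)$) and $\rho|_E = \id_E$ (evaluate at $(1, e)$), exhibiting $\varphi$ as a morphism in $C_{\Ff}(E)$.

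I expect the main obstacle to be the last step, namely establishing (or locating) the fusion-theoretic description of $[BG, BX]$ in the discrete $p$-toral setting for possibly non-injective homomorphisms $\alpha, \beta : G \to S$. This should follow from the Dwyer--Wilkerson analysis of $\Map(BG, BX)$ together with the Broto--Levi--Oliver theory of fusion systems for $p$-compact groups, but care is needed to verify that the parametrization of homotopy classes by $\Ff$-orbits of representations covers the case $P \cap E \neq 1$, where $\mu_P$ and $\mu_\varphi$ fail to be injective, and that the component condition from the adjunction is tracked correctly. The key trick making the argument clean is that centrality of $E$ in $S$ makes the combined maps $\mu_P$ and $\mu_\varphi$ honest group homomorphisms from $P \times E$, so the whole comparison can be carried out at the level of homomorphisms into $S$.
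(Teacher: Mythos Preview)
Your argument is correct and in fact takes a cleaner route than the paper's. Both proofs pivot on the adjunction $\Map(BP\times BE,BX)\simeq\Map(BP,\Map(BE,BX))$, but they use it at different levels of generality. The paper first proves that the two fusion systems have the same \emph{centric} subgroups (via a mapping-space computation showing $\Map(BP,BC_X(E))_k\simeq BZ(P)$ exactly when $P$ is $\Ff$-centric and contains $E$), then checks that $\Aut_{C_\Ff(E)}(P)=\Aut_\G(P)$ for centric $P$ using the same adjunction diagram you write down, and finally invokes Alperin's fusion theorem to conclude. Your approach bypasses both the centric-subgroup matching and Alperin by comparing \emph{all} morphism sets directly, packaging the adjunction via the multiplication maps $\mu_P,\mu_\varphi\colon P\times E\to S$. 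This is more economical: once the bijection $\Hom(G,S)/{\sim_\Ff}\to[BG,BX]$ is available for an arbitrary discrete $p$-toral $G$, there is no reason to restrict to centric subgroups.

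The obstacle you flag is not a real one here: the parametrization $[BG,BX]\cong\Hom(G,S)/{\sim_\Ff}$ for arbitrary (not necessarily injective) homomorphisms is precisely Theorem~6.3(a) of \cite{BLO3}, applied to the $p$-local compact group modelling $X$. In particular it covers the case $P\cap E\neq 1$. The component bookkeeping under the adjunction is also straightforward, since $\mu_P$ and $\mu_\varphi$ restrict to the inclusion $E\hookrightarrow S$ on $\{1\}\times E$, so both adjoints land in $\Map(BE,BX)_{Bi}=BC_X(E)$ and are identified there with the inclusion of $P$ and with $\varphi$, respectively.
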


Finally, in Section \ref{Consequences} we consider some of the consequences of the existence of a unitary embedding
of a $p$-local compact group. We obtain finiteness results for the $p$-local cohomology
of $|\Ll| \pcom $ and a stable elements formula for the Grothendieck ring $\K(|\Ll| \pcom)$ of 
maps from $|\Ll| \pcom $ to $BU(n) \pcom $ when the length of the orbit category of centric radical subgroups 
is small. The stable elements formula comes from the map 
\[ \Psi  \colon \K (|\Ll| \pcom) \to \higherlim{\Or(\Ff ^c)}{} R(P) \]
induced by the maps $\Psi_n$ introduced in Section \ref{Embeddings}. We show that if $l(\Or(\Ff^{cr}))<3$, then
$\Psi$ is surjective and if $l(\Or(\Ff^{cr}))<2$, then $\Psi$ is an isomorphism. In particular,
this map is an isomorphism for the Clark-Ewing, the Aguad\'e-Zabrodsky $p$-compact groups and 
the exotic $3$-local compact groups of \cite{G}.
\newline

Both authors are grateful to Jesper Grodal and Jesper M\o ller for 
providing arguments which helped simplify some of the proofs.

%The organization of the paper is as follows. Section \ref{Background} contains 
%an introduction to fusion systems over discrete $p$-toral groups
%and $p$-local compact groups. In Section \ref{Monomorphisms} we 
%introduce and study the notion of homotopy monomorphism at $p$.
%The existence of faithful representations that are fusion-invariant is shown in 
%Section \ref{Representations}. Unitary embeddings of
%$p$-local compact groups are studied in Section \ref{Embeddings}. 
%In Section \ref{pcompact} we apply the previous results to the Clark-Ewing
%and the Aguad\'e-Zabrodsky $p$-compact groups. Section \ref{Examples} is concerned with 
%unitary embeddings of finite loop spaces and exotic $3$-local 
%compact groups. Finally, in Section \ref{Consequences} we study some of the 
%consequences of the existence of unitary embeddings in $p$-local
%cohomology and maps to $BU(n) \pcom$.

%%%%%%%%%%%%%%%%%%%%%%%%%%%%%%%%%%%%%%%%%%%%%%%%%%%%%%%%%%%%%%%%%%
\section{Background on $p$-local compact groups}
\label{Background}
%%%%%%%%%%%%%%%%%%%%%%%%%%%%%%%%%%%%%%%%%%%%%%%%%%%%%%%%%%%%%%%%%

In this section we recall the definition of a $p$-local compact group in the form given in \cite{BLO3}. 
Let $\Z /p^{\infty} \cong \Z \left[ \frac{1}{p} \right] /\Z $ denote the union of
the cyclic $p$-groups $ \Z /p^n $ under the standard inclusions.

\begin{definition}
A discrete $p$-toral group is a group $P$ with a normal subgroup $
P_0 \triangleleft P $ such that $ P_0$ is isomorphic to a finite
product of copies of $ \Z /p^{\infty} $, and $ P / P_0 $ is a finite
$p$-group. The subgroup $P_0$ will be called the identity component
of $P$.
\end{definition}

The identity component $P_0$ of a discrete $p$-toral group can be
characterized as the characteristic subgroup of all infinitely 
$p$-divisible elements in $P$. If $ P_0 \cong (\Z /p^{\infty})^k $,
we say the rank of $P$ is $k$. 

Set $ |P| = ( \rk (P), |P/P_0|)$. We regard the order of a discrete $p$-toral 
group as an element of $\N ^2 $ with the lexicographical ordering. 
That is, $|P| \leq |P'| $ if and only if $\rk (P) < \rk (P') $, 
or $ \rk (P) = \rk (P') $ and $|P/P_0| \leq |P'/P'_0| $. In 
particular, $ P' \leq P $ implies $|P| \leq |P'| $, with equality 
only if $ P' = P $.
\newline

Given two discrete $p$-toral groups $P$, $Q$, let $\Hom (P,Q)$
denote the set of group homomorphisms from $P$ to $Q$, and let $\Inj
(P,Q)$ denote the set of monomorphisms. If $P$ and $Q$ are subgroups
of a larger group $S$, then $ \Hom _S(P,Q)\subseteq \Inj (P,Q)$
denotes the subset of homomorphisms induced by conjugation by
elements of $S$, and $\Aut _S(P)$ the group of automorphisms 
of $P$ induced by conjugation in $S$

\begin{definition}
\label{SaturatedFusion}

A fusion system $\Ff$ over a discrete $p$-toral group $S$ is a
subcategory of the category of groups whose objects are the subgroups 
of $S$, and whose morphism sets $ \Hom _{\Ff}(P,Q)$ satisfy the following conditions:

\begin{itemize}
\item[(a)] $ \Hom_S(P,Q) \subseteq \Hom_{\Ff}(P,Q) \subseteq \Inj(P,Q)$ for all $ P,Q \leq S $.
\item[(b)] Every morphism in $\Ff$ factors as an isomorphism in $\Ff$ followed by an inclusion.
\end{itemize}

\end{definition}

Let $\Iso _{\Ff}(P,Q)$ be the subset of $\Hom_{\Ff}(P,Q)$ of 
isomorphisms. We also use the notation $ \Aut _{\Ff}(P) = \Iso _{\Ff}(P,P) $ 
and $ \Out_{\Ff}(P) = \Aut _{\Ff}(P)/ \Inn (P)$. Two subgroups $P,P' \leq S $
are called $\Ff$-conjugate if $\Iso _{\Ff}(P,P') \neq \emptyset $.

The fusion systems we consider in this article will all satisfy the following saturation
condition. Here, and throughout the paper, we write $\Syl_p(G)$ for the set of Sylow 
$p$-subgroups of $G$. Also, for any $ P \leq G $ and any $ g \in N_G(P) $, $c_g \in \Aut (P)$ denotes the
automorphism $ c_g(x) = gxg^{-1} $.

\begin{definition}

Let $\Ff$ be a fusion system over a discrete $p$-toral group $S$.

\begin{itemize}
\item A subgroup $ P \leq S $ is fully centralized in $\Ff$ if $|C_S(P)| \geq |C_S(P')| $ for all $ P' \leq S $
that are $\Ff$-conjugate to $P$.
\item A subgroup $ P \leq S $ is fully normalized in $\Ff$ if $|N_S(P)| \geq |N_S(P')| $ for all $ P' \leq S $
that are $\Ff$-conjugate to $P$.
\item $\Ff$ is a saturated fusion system if the following three conditions hold:

\begin{itemize}
\item[(I)] For each $ P \leq S $ which is fully normalized in $\Ff$, $P$ is fully centralized in $\Ff$,
$\Out_{\Ff}(P)$ is finite and $\Out_S(P) \in \Syl_p(\Out_
{\Ff}(P))$.
\item[(II)] If $ P \leq S $ and $\phi \in \Hom _{\Ff}(P,S)$ are such that $\phi P $ is fully centralized, and if we set
\[ N_{\phi} = \{ g \in N_S(P) \mid \phi c_g \phi ^{-1} \in \Aut _S(\phi P) \} \]
then there is $\bar{\phi} \in \Hom _{\Ff}(N_{\phi},S)$ such that $
\bar{\phi} |_P = \phi $.
\item[(III)] If $ P_1 \leq P_2 \leq P_3 \leq \ldots $ is an increasing
sequence of subgroups of $S$, with $ P_{\infty} = \mathop{\cup}
\limits_{n=1}^{\infty} P_n $, and if $\phi \in \Hom (P_{\infty},S) $
is any homomorphism such that $ \phi _{|P_n} \in \Hom _{\Ff}(P_n,S)
$ for all $n$, then $ \phi \in \Hom _{\Ff}(P_{\infty},S)$.
\end{itemize}

\end{itemize}

\end{definition}

The motivating example for this definition is the fusion system of a connected compact Lie group $G$. 
If $T$ is a maximal torus of $G$, let $W_p$ be the $p$-Sylow subgroup of the Weyl group $W_G(T)
= N_G(T)/T$. This subgroup determines an extension $N_p$ of $W_p$ by $T$. The proof of Proposition 
9.3 (b) in \cite{BLO3} shows that any compact Lie group $G$ has a maximal discrete 
$p$-toral subgroup $S$ which can found as a discrete subgroup of $N_p$ and that it is unique up to $G$-conjugacy. 
We let $\Ff _S(G)$ be the fusion system over $S$ defined by setting $\Hom _{\Ff_S(G)}(P,Q) = \Hom _G(P,Q)$ for all $ P,Q \leq S $.

\begin{lemma}[Lemma 2.4 in \cite{BLO3}]
\label{WeylGroup}

Let $\Ff$ be a saturated fusion system over a discrete $p$-toral
group $S$ with connected component $ T = S_0 $. Then the following
hold for all $ P \leq T $.

\begin{itemize}
\item For every $ P' \leq S $ which is $\Ff$-conjugate to $P$ and
fully centralized in $\Ff$, $ P' \leq T $ and there exists some $ w
\in \Aut _{\Ff} (T) $ such that $ w _{|P} \in \Iso _{\Ff} (P,P') $.
\item Every $ \phi \in \Hom _{\Ff} (P,T) $ is the restriction of
some $ w \in \Aut _{\Ff}(T) $.
\end{itemize}

\end{lemma}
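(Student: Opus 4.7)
The plan is to reduce both assertions to a single extension property: if $P \leq T$ and $\phi \in \Hom_{\Ff}(P,S)$ has fully centralized image $\phi(P)$, then $\phi$ is the restriction of some $w \in \Aut_{\Ff}(T)$. Granting this, the first bullet is immediate: take $\phi$ to be any iso $f \colon P \to P'$ in $\Ff$ (the hypotheses on $P'$ match), and let $w \in \Aut_{\Ff}(T)$ be the resulting extension; then $P' = w(P) \leq w(T) = T$, which delivers both $P' \leq T$ and the required automorphism. For the second bullet I would first choose a fully centralized $\Ff$-conjugate $P^* \leq S$ of $\phi(P)$; by the first bullet $P^* \leq T$, and any iso $g \colon \phi(P) \to P^*$ in $\Ff$ extends to some $w_0 \in \Aut_{\Ff}(T)$. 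The composite $g \circ \phi \colon P \to P^*$ has fully centralized image, so the extension property yields $w_1 \in \Aut_{\Ff}(T)$ with $w_1|_P = g \circ \phi$, and then $w = w_0^{-1} w_1 \in \Aut_{\Ff}(T)$ restricts to $\phi$ on $P$.

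To establish the extension property I would first check that $T \leq N_\phi$, where $N_\phi$ is the subgroup produced by saturation axiom (II). Since $T$ is abelian and $P \leq T$, every $t \in T$ centralizes $P$, so $c_t$ is trivial on $P$ and hence $\phi c_t \phi^{-1} = \id_{\phi P} \in \Aut_S(\phi P)$. Because $\phi(P)$ is fully centralized, axiom (II) produces $\bar\phi \in \Hom_{\Ff}(N_\phi, S)$ extending $\phi$, and precomposition with the inclusion $T \hookrightarrow N_\phi$ delivers a homomorphism $\psi \in \Hom_{\Ff}(T,S)$ that extends $\phi$.

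The remaining step is to show that $\psi$ takes values in $T$ and is in fact an automorphism of $T$. Since $T$ is divisible and homomorphic images of divisible groups are divisible, $\psi(T)$ consists of infinitely $p$-divisible elements of $S$; by the characterization of $S_0$ as the set of such elements, $\psi(T) \leq T$. Moreover $\psi$ is injective because $\Hom_{\Ff}(T,S) \subseteq \Inj(T,S)$, so $\psi(T)$ is a divisible subgroup of $T$ of full rank $k = \rk(T)$. Since $T \cong (\Z/p^\infty)^k$ contains no proper divisible subgroup of rank $k$, we conclude $\psi(T) = T$, so $\psi \in \Aut_{\Ff}(T)$.

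Aside from the divisibility reasoning, the proof is a straightforward combination of axioms (I) and (II) with the characterization of $T = S_0$. The step I would want to handle most carefully is the surjectivity of $\psi$ onto $T$; the existence of a fully centralized conjugate $P^*$ used in the second bullet is standard in the discrete $p$-toral setting and should follow by combining the finite-index structure of $S/T$ with axiom (I).
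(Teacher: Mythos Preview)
The paper does not give its own proof of this lemma; it merely records it as Lemma 2.4 of \cite{BLO3} and cites that reference. Your argument is correct and is essentially the proof one finds in \cite{BLO3}: show $T \leq N_\phi$ using that $T$ is abelian and contains $P$, apply axiom (II) to extend $\phi$ to $\psi \in \Hom_{\Ff}(T,S)$, observe $\psi(T) \leq S_0 = T$ via the characterization of $S_0$ by infinite $p$-divisibility, and then conclude $\psi(T) = T$. For this last step the paper's framework offers a slightly cleaner phrasing than your divisibility/rank argument: since $\psi$ is injective one has $|\psi(T)| = |T|$ in the order defined just before Definition \ref{SaturatedFusion}, and the remark there that $P' \leq P$ with $|P'| = |P|$ forces $P' = P$ gives $\psi(T) = T$ immediately. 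Your reduction of the two bullets to the single extension property is clean; just note that in the second bullet you only need \emph{some} isomorphism $\phi(P) \to P^*$ that extends (as supplied by the first bullet), not that every such $g$ extends.
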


\begin{definition}
Let $\Ff$ be any fusion system over a discrete $p$-toral group $S$.
A subgroup $ P \leq S $ is called $\Ff$-centric if $P$ and all of its
$\Ff$-conjugates contain their $S$-centralizers. A subgroup $ Q \leq S $
is called $\Ff$-radical if $\Out_{\Ff}(Q)$ contains no nontrivial normal $p$-subgroup. 
\end{definition}

Note that $\Out_{\Ff}(Q)$ is finite for all $Q \leq S$ by Proposition 2.3
in \cite{BLO3}, so this definition makes sense.

We will denote by $\Ff ^c $ the full subcategory of $\Ff $ whose objects are the $\Ff$-centric subgroups of $S$
and by $\Ff^{cr}$ the full subcategory whose objects are the $\Ff$-centric radical subgroups of $S$.

\begin{theorem}[Alperin's fusion theorem]
\label{Alperinfusionthm}

Let $\Ff$ be a saturated fusion system over a discrete $p$-toral
group $S$. Then for each $ \phi \in \Iso _{\Ff} (P,P')$, there exist
sequences of subgroups of $S$
\[ P = P_0, P_1, \ldots , P_k = P' \text{ and  } Q_1, Q_2, \ldots , Q_k \]
and elements $ \phi _i \in \Aut _{\Ff} (Q_i) $, such that

\begin{itemize}
\item[(a)] $Q_i $ is fully normalized in $\Ff$, $\Ff$-radical, and
$\Ff$-centric for each $i$ ;
\item[(b)] $P_{i-1}$, $P_i \leq Q_i $ and $ \phi _i(P_{i-1}) = P_i $ for
each $i$ ; and
\item[(c)] $ \phi = \phi _k |_{P_{k-1}} \circ \phi _{k-1}|_{P_{k-2}} \circ \ldots \circ \phi _1|_{P_0}$.
\end{itemize}

\end{theorem}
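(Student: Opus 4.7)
The plan is to prove this by induction on the ``order'' $|P| = (\rk(P), |P/P_0|) \in \N^2$ equipped with the lexicographic order, exploiting that strict inclusion $P \lneq Q$ strictly decreases $|Q|$ relative to $|P|$. I first reduce to the case where $P'$ is fully normalized (equivalently, by saturation axiom (I), fully centralized): given $\phi \in \Iso_\Ff(P,P')$, pick a fully normalized representative $P^* $ of the $\Ff$-conjugacy class of $P$, choose $\ Ff$-isomorphisms $\psi : P \to P^*$ and $\psi' : P' \to P^*$, and write $\phi = (\psi')^{-1} \circ \psi$. Since Alperin factorizations compose, it suffices to factor isomorphisms whose target is fully normalized; so from now on assume $P'$ is fully normalized and fully centralized.

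The inductive step is driven by the extension axiom (II). For fixed $\phi$ consider
\[ N_\phi = \{ g \in N_S(P) \mid \phi c_g \phi^{-1} \in \Aut_S(P') \}, \]
which always contains $P$. If $P$ is itself fully normalized, $\Ff$-centric, and $\Ff$-radical, then $N_\phi = N_S(P) \geq P$ suffices and we may take $k=1$, $Q_1 = P$ and $\phi_1 \in \Aut_\Ff(P)$ equal to (a suitable extension of) $\phi$. Otherwise, standard arguments in the saturation setup force $P \lneq N_\phi$: if $P$ is not $\Ff$-centric, one enlarges $P$ by an element of $C_S(P)$; if $P$ is $\Ff$-centric but not $\Ff$-radical, one pulls back a nontrivial normal $p$-subgroup from $\Out_\Ff(P)$ into $\Aut_\Ff(P)$ and uses it to find conjugating elements outside $P$. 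In each case axiom (II) provides $\bar\phi \in \Hom_\Ff(N_\phi, S)$ extending $\phi$, and since $|N_\phi| < |P|$ in the lex order, the inductive hypothesis factors $\bar\phi$ as $\phi_k|_{\cdots} \circ \cdots \circ \phi_1|_{\cdots}$ through $\Ff$-centric, $\Ff$-radical, fully normalized subgroups; restricting each composition to $P \leq N_\phi$ gives the desired factorization of $\phi$.

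Two $p$-toral-specific subtleties arise. First, when $P \leq T = S_0$ lies in the identity component, the argument that forces $N_\phi \gneq P$ (or that $\phi$ extends to all of $T$) is carried out using Lemma \ref{WeylGroup}, which guarantees that any $\Ff$-morphism between subgroups of $T$ is the restriction of some $w \in \Aut_\Ff(T)$; this lets one treat $T$ as playing the role of the ``maximal torus.'' Second, the base case $P = S$ needs $S$ itself to be $\Ff$-centric radical fully normalized; $S$ is always fully normalized and $\Ff$-centric, and if $S$ fails to be $\Ff$-radical one iterates the argument using the preimage in $\Aut_\Ff(S)$ of $O_p(\Out_\Ff(S))$, which lives over a proper $\Ff$-centric radical subgroup by the finite Alperin-fusion descent on $\Out_\Ff(S)$ (this group is finite by Proposition 2.3 of \cite{BLO3}).

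I expect the main obstacle to be verifying that the induction is genuinely well-founded in the $p$-toral setting: although $\N^2$ under lex is well-ordered, one must confirm that every extension step in axiom (II) strictly increases $|P|$ and that no ascending chain is needed that would not be covered by axiom (III). The latter is guaranteed because at each step we produce a single honest supergroup from $N_\phi \gneq P$, and axiom (III) is invoked only to certify that the limit data defining $\Ff$ behaves coherently; the finiteness of $\Out_\Ff(Q)$ ensures that the descent inside $\Aut_\Ff(Q)$ used to extract the radicality step terminates.
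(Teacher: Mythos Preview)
The paper does not prove this theorem; it simply cites \cite{BLO3}, Theorem 3.6. Your proposal is an attempt to sketch that argument, but there is a genuine gap in the well-foundedness of the induction. You write ``since $|N_\phi| < |P|$ in the lex order'' --- the inequality is reversed ($P \leq N_\phi$, so $|P| \leq |N_\phi|$), and once corrected the inductive hypothesis must apply to subgroups of order \emph{strictly greater} than $|P|$. For that form of induction one needs that there is no infinite strictly ascending chain $|P_0| < |P_1| < \cdots$ of orders of subgroups of $S$, and this fails for discrete $p$-toral groups: for instance $\Z/p \lneq \Z/p^2 \lneq \cdots$ inside $T = S_0$ gives such a chain. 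Your appeal to the fact that $\N^2$ under the lexicographic order is well-ordered points in the wrong direction: well-ordering guarantees least elements, not greatest ones, and says nothing about bounding ascending chains. Your last paragraph recognizes the issue but dismisses it with ``at each step we produce a single honest supergroup $N_\phi \gneq P$'' --- that controls one step, not the iteration.

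In \cite{BLO3} this difficulty is handled by the construction $P \mapsto P^{\bullet}$ (their Section 3): one shows $P \leq P^{\bullet}$, that every $\phi \in \Hom_\Ff(P,S)$ extends to $P^{\bullet}$, and crucially that the set $\{P^{\bullet} : P \leq S\}$ contains only finitely many $S$-conjugacy classes. The inductive argument is then run among these finitely many subgroups, which forces termination. Your sketch provides no such finiteness device, so as written the induction does not close. Two smaller points: your ``$k=1$, $Q_1=P$'' case when $P$ is fully normalized, centric, and radical is not correct as stated, since $\phi \colon P \to P'$ need not have $P=P'$; and $S$ is always $\Ff$-radical because $\Out_S(S)=1 \in \Syl_p(\Out_\Ff(S))$ by axiom (I), so the contingency you raise in the base case cannot occur.
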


\begin{proof}
See \cite{BLO3}, Theorem 3.6.
\end{proof}

\begin{definition}
\label{Linking}
Let $\Ff $ be a fusion system over the $p$-group $S$. A centric
linking system associated to $\Ff$ is a category $\Ll$ whose objects
are the $\Ff $-centric subgroups of $S$, together with a functor
\[ \pi  \colon \Ll \rightarrow \Ff ^c \]
and monomorphisms $ P \stackrel{\delta_P}{\longrightarrow} \Aut _{\Ll}(P)$ 
for each $\Ff$-centric subgroup $ P \leq S $, which satisfy the following conditions:

\begin{itemize}
\item[(A)] $\pi$ is the identity on objects and surjective on morphisms. More precisely, for each pair of objects $ P,Q \in \Ll $,
$Z(P)$ acts freely on $\Mor _{\Ll}(P,Q)$ by composition (upon
identifying $Z(P)$ with $\delta _P(Z(P)) \leq \Aut _{\Ll}(P)$), and
$\pi$ induces a bijection
\[ \Mor _{\Ll}(P,Q)/Z(P) \stackrel{\cong}{\longrightarrow} \Hom _{\Ff}(P,Q) \]
\item[(B)] For each $\Ff$-centric subgroup $ P \leq S $ and each $ g \in P $, $\pi $ sends the 
element $\delta _P(g) \in \Aut_{\Ll}(P)$ to $c_g \in \Aut _{\Ff}(P) $.
\item[(C)] For each $ f \in \Mor _{\Ll}(P,Q) $ and each $g \in P$, the following square commutes in $\Ll$:
\[
\diagram P \rto^{f} \dto_{\delta _P(g)} & Q \dto^{\delta _Q(\pi
(f)(g))} \cr P \rto^f & Q
\enddiagram
\]
\end{itemize}

\end{definition}

\begin{definition}

A $p$-local compact group is a triple $(S,\Ff,\Ll)$, where $\Ff$ is
a saturated fusion system over the discrete $p$-toral group $S$ and
$\Ll$ is a centric linking system associated to $\Ff$. The
classifying space of the $p$-local finite group $(S,\Ff,\Ll)$ is the
space $|\Ll|^{\wedge}_p $. We will refer to $S$ as the Sylow of 
$|\Ll|^{\wedge}_p $.

\end{definition}

The orbit category $\Or (\Ff)$ is the category whose objects are the
subgroups of $S$ and whose morphisms are given by
\[ \Hom _{\Or (\Ff)}(P,Q) = \Rep _{\Ff} (P,Q) = \Hom _{\Ff}(P,Q) / \Inn (Q) . \] 
For any full subcategory $\Ff _0 $ of $\Ff$ we also consider $\Or (\Ff _0) $, the
full subcategory of $\Or (\Ff) $ whose objects are those of $\Ff
_0$. Let $B  \colon \Or (\Ff_0) \to \HoTop$ be the classifying space functor
defined on the homotopy category of topological spaces.

\begin{proposition}[Proposition 4.6 in \cite{BLO3}]
\label{Rigidification}

Fix a saturated fusion system $\Ff $ over a discrete $p$-toral group
$S$, and let $\Ff _0 \subseteq \Ff ^c $ be any full subcategory. For
any linking system $\Ll _0 $ associated to $\Ff _0 $, the left
homotopy Kan extension $ \widetilde{B} \colon \Or (\Ff _0) \to \Top $ of the
constant functor $ \Ll _ 0 \to \Top $
along the projection $ \widetilde{\pi}_0 \colon \Ll _0 \to \Or (\Ff _0) $ is
a rigidification of $B$, and there is a homotopy equivalence:
\[   |\Ll _0| \simeq \hocolim{\Or (\Ff _0)}{} \widetilde{B} \]
\end{proposition}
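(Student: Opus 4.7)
The plan is to compute $\widetilde{B}$ using the pointwise formula for homotopy left Kan extensions, identify each value with $BP$, and then apply the Fubini theorem for homotopy colimits. The pointwise formula gives $\widetilde{B}(P) \simeq |\widetilde{\pi}_0 \downarrow P|$, the nerve of the comma category at $P$. Two things must be verified: that this nerve is naturally homotopy equivalent to $BP$ with induced maps matching $B$ in the homotopy category, and that the Fubini interchange applies.

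To analyse $\widetilde{\pi}_0 \downarrow P$, I would unpack its objects as pairs $(Q, \bar{\alpha})$ with $Q \in \Ob(\Ff_0)$ and $\bar{\alpha} \in \Rep_\Ff(Q,P)$, and its morphisms as $\Ll_0$-morphisms making the evident triangle commute in $\Or(\Ff_0)$. I would then define a functor from the translation category $EP$ (one object with automorphism group $P$) to $\widetilde{\pi}_0 \downarrow P$ sending the unique object to $(P,\mathrm{id}_P)$ and $g \in P$ to $\delta_P(g) \in \Aut_{\Ll_0}(P)$; axiom (B) of Definition \ref{Linking} ensures that this lands in the endomorphisms of $(P,\mathrm{id}_P)$. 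Axiom (A) together with the injectivity of $\delta_P$ shows that this functor is fully faithful, since $\pi^{-1}(\Inn(P)) \subseteq \Aut_{\Ll_0}(P)$ has size $|P|$ and coincides with $\delta_P(P)$. Cofinality then follows from Quillen's Theorem A: every object $(Q,\bar{\alpha})$ admits a morphism to $(P,\mathrm{id}_P)$ by lifting a representative of $\bar{\alpha}$ through $\pi$ via axiom (A), and a diagram chase using axioms (A) and (C) shows the relevant comma categories to be contractible. Hence $\widetilde{B}(P) \simeq BP$, naturally in $P$ in the homotopy category, which identifies $\widetilde{B}$ as a rigidification of $B$.

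With this rigidification in hand, the equivalence with $|\Ll_0|$ is formal: since $\widetilde{B}$ is the homotopy left Kan extension of the constant point-valued functor $c_\ast \colon \Ll_0 \to \Top$ along $\widetilde{\pi}_0$, the Fubini theorem for homotopy colimits gives
\[ \hocolim{\Or(\Ff_0)}{} \widetilde{B} \simeq \hocolim{\Ll_0}{} c_\ast = |\Ll_0|. \]

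The main obstacle is the cofinality verification. Showing that the comma categories entering Quillen's Theorem A are contractible requires a careful interplay of all three axioms of a linking system together with axiom (b) of a fusion system, and uses $\Ff$-centricity of the objects of $\Ll_0$ in an essential way through the description of the kernel of $\pi$ on each $\Aut_{\Ll_0}(P)$ as $\delta_P(Z(P))$; absent centricity the endomorphism monoid of $(P,\mathrm{id}_P)$ would in general be a strictly larger extension and the identification $\widetilde{B}(P) \simeq BP$ would fail.
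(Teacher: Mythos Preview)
The paper does not give its own proof of this proposition; it is quoted verbatim from \cite[Proposition~4.6]{BLO3} and used as a black box. Your outline is essentially the argument given there: compute the homotopy left Kan extension pointwise as the nerve of the overcategory $\widetilde{\pi}_0 \downarrow P$, identify that nerve with $BP$ by exhibiting the full subcategory on $(P,[\mathrm{id}_P])$ as having endomorphism group $\delta_P(P)\cong P$ and showing it is homotopy terminal, and then invoke transitivity (Fubini) for homotopy colimits to recover $|\Ll_0|$.

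Two small corrections. First, what you call ``the translation category $EP$ (one object with automorphism group $P$)'' is $\B P$, not $EP$; the latter usually denotes the contractible groupoid with object set $P$. Second, the cardinality argument ``$\pi^{-1}(\Inn(P))$ has size $|P|$'' does not literally make sense for infinite discrete $p$-toral $P$; the correct phrasing is that axiom~(A) gives a bijection $\pi^{-1}(\Inn(P))/\delta_P(Z(P)) \cong \Inn(P)=P/Z(P)$, and since $\delta_P(P)\subseteq \pi^{-1}(\Inn(P))$ with $\delta_P$ injective, equality follows. With these adjustments your sketch matches the published proof.
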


It is possible to construct $p$-local compact groups associated to compact Lie groups, 
$p$-compact groups and some locally finite discrete groups in such a way that their 
classifying spaces are homotopy equivalent to the $p$-completion of the classifying 
spaces of the given groups \cite{BLO3}.

%%%%%%%%%%%%%%%%%%%%%%%%%%%%%%%%%%%%%%%%%%%%%%%%%%%%%%%%%%%%%%%%%%
\section{Homotopy monomorphisms}
\label{Monomorphisms}
%%%%%%%%%%%%%%%%%%%%%%%%%%%%%%%%%%%%%%%%%%%%%%%%%%%%%%%%%%%%%%%%%

In \cite{DW}, W.G.~Dwyer and C.W.~Wilkerson introduced the notion of a monomorphism between $p$-compact groups. 
A $p$-compact group is a triple $(X,BX,e)$, where $X$ is an $\F_p$-finite space (that is, $H^*(X;\F _p)$ is finite), 
$BX$ is a $p$-complete connected pointed space and $ e \colon X \to \Omega BX$ is a homotopy equivalence. A 
pointed map $f \colon BX \to BY$ is a monomorphism if the homotopy fiber $F$ of $f$ is $\F_p$-finite, that is, 
$H^*(F; \F_p)$ is finite. 

But in general, if $G$ is a finite group, the constant map $ * \to BG \pcom $ may not be a monomorphism 
in this sense. For instance, Proposition 5.1.1 of \cite{L} implies that if $H$ is a finite $2$-perfect
group whose $2$-Sylow subgroup is a dihedral group, then the homotopy fiber $\Omega ( BH _2^{\wedge} ) $ is 
not $\F_2$-finite. A particular example is given by the group $A_7$ with $2$-Sylow $D_8$. 

However, if $G$ is a finite group and $ c : B\Z/p \to BG \pcom$ is the trivial map, then
\[ \Map(B\Z/p,BG \pcom)_c \simeq BC_G(\{ 1 \}) \pcom = BG \pcom \]
where the homotopy equivalence follows from Proposition 7.5 of \cite{BrK}. By Remark \ref{LoopLocal} below,
the space $\Map_*(B\Z/p,\Omega (BG \pcom))$ is contractible. That is, $\Omega (BG \pcom)$ is $B\Z/p$-null in 
the sense of \cite{F} (see Definition \ref{Null} and Remark \ref{LoopLocal}).

\begin{definition}
\label{Null}
Let $A$ be a pointed space. We say that a pointed space $Z$ is $A$-null if the natural map
$ Z \to \Map(A,Z)$ is a homotopy equivalence. We say that a map $f \colon X \to Y$ 
is a homotopy monomorphism at $p$ if the homotopy fiber $F$ of $f \pcom$ is $B\Z/p$-null.
\end{definition}

The space $Z$ is a $A$-null if and only if the evaluation map $\Map(A,Z) \to Z$ is a homotopy 
equivalence. Note then that if $Z$ is a connected pointed space, then $Z$ is $A$-null if and 
only if $\Map_*(A,Z)$ is contractible. 

The following proposition shows that Definition \ref{Null} is consistent with the previous
definition of monomorphism for connected $p$-compact groups in \cite{DW}. Recall
that an unstable module $M$ over the Steenrod algebra $\A$ is called locally finite if for 
any element $x \in M$, the $\A$-module $M$ generated by $x$ is finite.

\begin{proposition}
\label{PCompactMono}
let $f \colon BX \to BY$ be a homomorphism of connected $p$-compact groups with homotopy fiber $Y/X$. 
Then $Y/X$ is $\F_p$-finite if and only if it is $B\Z/p$-null.  
\end{proposition}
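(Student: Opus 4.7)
I would prove the proposition by reducing both directions to Lannes' theory of mapping spaces out of $B\Z/p$, after first verifying that $Y/X$ is a connected, nilpotent, $p$-complete space so that Lannes' theorem applies.

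From the long exact sequence of homotopy groups for the fibration $Y/X \to BX \to BY$, and using that $X$, $Y$ are connected (so $BX$, $BY$ are simply connected, since $\pi_1(BX) = \pi_0(X) = 0$ and likewise for $Y$), one sees that $Y/X$ is connected and that $\pi_1(Y/X)$ is a quotient of $\pi_2(BY) = \pi_1(Y)$. Since $\pi_1(Y)$ is a finite $p$-group for any connected $p$-compact group, $\pi_1(Y/X)$ is a finite $p$-group and $Y/X$ is nilpotent. Simple connectivity of $BY$ combined with $p$-completeness of $BX$ and $BY$ ensures that $Y/X$ is $p$-complete. The direction $\F_p$-finite $\Rightarrow$ $B\Z/p$-null then follows immediately: finiteness of $H^*(Y/X;\F_p)$ implies local finiteness as an unstable $\A$-module, and for a connected $p$-complete nilpotent space with finite $p$-group fundamental group, Lannes' theorem yields $\Map_*(B\Z/p, Y/X) \simeq *$, which (by connectedness) is equivalent to $B\Z/p$-nullness.

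For the converse, assuming $Y/X$ is $B\Z/p$-null, the same Lannes machinery gives that $H^*(Y/X;\F_p)$ is locally finite as an unstable $\A$-module. To upgrade local finiteness to genuine finiteness I would use the Eilenberg--Moore spectral sequence
\begin{equation*}
E_2 = \Tor_{H^*(BY;\F_p)}(H^*(BX;\F_p), \F_p) \Longrightarrow H^*(Y/X;\F_p),
\end{equation*}
whose convergence is ensured by simple connectivity of $BY$. The cohomology rings $H^*(BX;\F_p)$ and $H^*(BY;\F_p)$ are Noetherian by Dwyer--Wilkerson, and combining Noetherianness with local finiteness of the abutment forces $H^*(BX;\F_p)$ to be finitely generated as an $H^*(BY;\F_p)$-module and the $E_2$ page to be bounded, so that $H^*(Y/X;\F_p)$ is finite.

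The principal obstacle is this converse direction: locally finite mod $p$ cohomology is strictly weaker than honest finiteness in general, so closing the gap genuinely uses the rigidity provided by the $p$-compact group structure on $BX$ and $BY$, specifically the Noetherianness of their mod $p$ cohomology rings. Without that input one could not hope for such an implication, as other $p$-complete nilpotent spaces with locally finite cohomology need not be $\F_p$-finite.
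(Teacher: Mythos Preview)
Your overall architecture matches the paper's: both directions go through Lannes' theory (the paper cites Miller's Theorem~C directly for the forward implication, which is the same content), and the converse proceeds by showing $H^*(Y/X;\F_p)$ is locally finite and then upgrading to genuine finiteness. The preliminary verification that $Y/X$ is connected, nilpotent and $p$-complete is also handled the same way.

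There is, however, a genuine gap in your converse. First, invoking Lannes' theorem to deduce $T_{\Z/p}H^*(Y/X;\F_p) \cong H^*(Y/X;\F_p)$ from $B\Z/p$-nullness requires $H^*(Y/X;\F_p)$ to have finite type over $\F_p$, and you have not established this. Second, your Eilenberg--Moore step does not work as stated: the assertion that ``local finiteness of the abutment forces $H^*(BX;\F_p)$ to be finitely generated as an $H^*(BY;\F_p)$-module'' is unjustified, and I do not see a mechanism for it. Local finiteness is a statement about the Steenrod algebra action on $H^*(Y/X;\F_p)$; there is no evident way to push it backwards through the spectral sequence to control the module structure of $H^*(BX;\F_p)$ over $H^*(BY;\F_p)$.

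The paper closes both gaps with a single observation you overlooked. The fiber sequence $X \to Y \to Y/X \to BX \to BY$ contains the fibration $Y \to Y/X \to BX$, whose fiber $Y$ is $\F_p$-finite \emph{by the very definition} of a $p$-compact group. The Serre spectral sequence then shows that $H^*(Y/X;\F_p)$ is a finitely generated $H^*(BX;\F_p)$-module, hence (since $H^*(BX;\F_p)$ is Noetherian) a finitely generated $\F_p$-algebra. This immediately gives finite type, so Lannes applies and yields local finiteness; and since local finiteness means every element is nilpotent, finite generation as an $\F_p$-algebra forces $H^*(Y/X;\F_p)$ to be finite outright. No Eilenberg--Moore spectral sequence is needed.
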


\begin{proof}
Consider the fiber sequence
\[ X \to Y \to Y/X \to BX \to BY \]
The spaces $X$ and $Y$ are nilpotent because they are loop spaces. By Proposition III.5.5 of \cite{BK},
$Y/X$ is also nilpotent. This sequence also implies that $H^*(Y/X;\F_p)$ is a finitely generated
$H^*(BX;\F_p)$-module. Since $H^*(BX;\F_p)$ is finitely generated as an $\F_p$-algebra, $H^*(Y/X;\F_p)$
is finitely generated as an $\F_p$-algebra. In particular, $Y/X$ has finite type over $\F_p$, that is, each
$H^j(Y/X;\F_p)$ is finite. Note that $BX$ and $BY$ are $p$-complete and $BX$ is simply connected, 
so $Y/X$ is $p$-complete by Lemma II.4.8 of \cite{BK}. As $Y$ is connected, so is $Y/X$. 

H.~Miller's proof of the Sullivan conjecture shows that if $F$ is connected, nilpotent and $\F_p$-finite, 
then it is also $B\Z/p$-null (Theorem C of \cite{Mi}), so if $Y/X$ is $\F_p$-finite, then it 
is $B\Z/p$-null. 

If $Y/X$ is $B\Z/p$-null, then $\Map(B\Z/p,Y/X) \simeq Y/X$. In particular, $\Map(B\Z/p,Y/X)$ is 
$p$-complete. Since $Y/X$ has finite type over $\F_p$, we can use Corollary 3.4.3 of \cite{La}
and obtain
\[ T_{\Z/p}(H^*(Y/X;\F_p)) \cong H^*(\Map(B\Z/p,Y/X);\F_p) \cong H^*(Y/X;\F_p) \]
where $T_{\Z/p}$ is Lannes' T-functor introduced in \cite{La}. By Theorem 6.2.1 in 
\cite{S}, it follows that $H^*(Y/X;\F_p)$ is locally finite and in particular, all 
of its elements are nilpotent. Because $H^*(Y/X;\F_p)$ is finitely
generated as an $\F_p$-algebra, it must be finite. 
\end{proof}

\begin{remark}
\label{LoopLocal}
Let $A$ and $X$ be connected spaces and let $c \colon A \to X$ be the constant map. Then $\Map(A,X)_c \simeq X$ via the 
evaluation map if and only if $\Map_*(A,\Omega X)$ is contractible. Note that a space is $A$-null if and only if each
of its connected components is $A$-null and all the connected components of $\Omega X$ are homotopy equivalent. Therefore
$\Map(A,X)_c \simeq X$ via the evaluation map if and only if $\Omega X$ is $A$-null. In particular, by Theorem 6.3(c) 
of \cite{BLO3}, if $|\Ll| \pcom $ is the classifying space of a $p$-local finite group, then $\Omega (|\Ll| \pcom) $ is 
$B\Z/p$-null. Hence the constant map $* \to |\Ll| \pcom$ is a homotopy monomorphism at $p$.
\end{remark}

From now on, we fix a prime $p$ and use homotopy monomorphism instead of homotopy monomorphism at $p$. The
next lemma contains some auxiliary results that will be used in Theorem \ref{HomotopyMonomorphism}.

\begin{lemma}
\label{PropertiesMono}
Let $A$ be a connected pointed space. 
\begin{itemize}
\item[(a)] If $Y$ is homotopically discrete, then $Y$ is $A$-null.
\item[(b)] A homotopy equivalence is a homotopy monomorphism.
\item[(c)] Let $F \to E \to B$ be a homotopy fiber sequence. If $B$ and $E$ are $A$-null, then $F$ is $A$-null. If
$B$ is $A$-null and connected and $F$ is $A$-null, then $E$ is $A$-null.
\item[(d)] Let $ g \colon Y \to Z$ and $f \colon W \to Y$. If $g$ and $g \circ f$ are homotopy monomorphisms, then
so is $f$. If $f$ and $g$ are homotopy monomorphisms and the homotopy fiber of $g \pcom$ is connected, then 
$g \circ f$ is a homotopy monomorphism.
\item[(e)] The space $X$ is $A$-null if and only if $\Omega X$ is $A$-null and every map $A \to X$ is nullhomotopic.
\item[(f)] Let $H$ and $G$ be discrete $p$-toral groups and let $ \alpha \colon H \to G$ be a group homomorphism. Then $B\alpha$ is a homotopy monomorphism if and only if
$\alpha$ is injective.
\item[(g)] If $X$ is $p$-good, the $p$-completion map $X \to X \pcom $ is a homotopy monomorphism.
\end{itemize}
\end{lemma}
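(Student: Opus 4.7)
The plan is to prove each item using formal properties of $A$-null spaces, with fibration theory in mapping spaces as the main tool. For a connected pointed CW complex $A$ and a fibration $F \to E \to B$, the induced sequence $\Map(A,F) \to \Map(A,E) \to \Map(A,B)$ is again a fibration, and evaluation at the basepoint of $A$ yields a comparison map back to $F \to E \to B$. This is the essential mechanism behind parts (c) and (d).

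For (a), connectedness of $A$ forces $\Map(A,Y)$ to decompose as a disjoint union indexed by $\pi_0(Y)$, and each component of a homotopically discrete $Y$ is contractible, making each factor a point so that evaluation is an equivalence. Part (b) is immediate since a homotopy equivalence has contractible homotopy fiber, and any contractible space is $A$-null. For (c), I apply $\Map(A,-)$ to the fibration and compare with it via evaluation: in the first case both middle and base are equivalent to $E$ and $B$, so the fibers agree; in the second case, connectedness of $B$ ensures every homotopy fiber of the mapping fibration is equivalent to $\Map(A,F)$, and the same five-lemma-type argument yields the conclusion. Part (d) then follows: the homotopy fibers $F_f$, $F_{g \circ f}$, $F_g$ of $f\pcom$, $(g \circ f)\pcom$, $g\pcom$ fit into a fiber sequence $F_f \to F_{g \circ f} \to F_g$, and (c) handles both directions, with connectedness of $F_g$ providing the hypothesis needed for the second statement.

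The main technical work lies in (e). Since $A$-nullness is a componentwise condition and the components of $\Omega X$ are all homotopy equivalent via H-space translation, one reduces to the connected case. For connected pointed $X$, $A$-nullness is equivalent to $\Map_*(A, X) \simeq *$, and the adjunction $\Omega \Map_*(A, X) \simeq \Map_*(A, \Omega X)$ gives both directions modulo two extra points: the forward implication needs the observation that $\Map(A, X) \simeq X$ forces every map $A \to X$ to be null, while the converse uses nullhomotopy of every map $A \to X$, together with H-space translation in $\Omega X$ to lift free nullhomotopies to pointed ones, to conclude that each component of $\Map_*(A, X)$ is actually trivial and not merely $1$-connected, hence contractible.

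For (f), injectivity of $\alpha$ gives homotopy fiber $G/\alpha(H)$ for $B\alpha$, a discrete set of cosets; the Bousfield--Kan fibre lemma (applicable since every element of $G$ has $p$-power order, so the translation action on $H_0$ is nilpotent) identifies the fiber of $(B\alpha)\pcom$ with the same discrete set, and (a) applies. Conversely, if $\ker(\alpha)$ is nontrivial it contains a copy of $\Z/p$, and the fiber of $(B\alpha)\pcom$ then contains $B\Z/p$ as a summand in a component, which is not $B\Z/p$-null by Miller's theorem. Finally (g) reduces to (b): when $X$ is $p$-good, $X\pcom$ is $p$-complete, so $p$-completion of $X \to X\pcom$ is the equivalence $X\pcom \to (X\pcom)\pcom$, whose fiber is contractible. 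The main obstacles will be the component-chasing in (e) and the subtle argument needed there to upgrade free nullhomotopies to pointed ones.
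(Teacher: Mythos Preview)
Your treatment of parts (a)--(e) and (g) is correct and close to the paper's, which is terser (e.g.\ it cites Farjoun's book for (c) rather than spelling out the mapping-space fibration argument, and it packages the key point of (e) as the equivalence ``$\Map(A,X)_c \simeq X$ via evaluation $\iff$ $\Omega X$ is $A$-null'' from Remark~\ref{LoopLocal}). Your extra worry in (e) about lifting free nullhomotopies to pointed ones is harmless but unnecessary: the $\pi_1(X)$-action on $[A,X]_*$ fixes the class of the constant map, so $[A,X]=*$ already forces $[A,X]_*=*$.

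Part (f), however, has a genuine gap. Discrete $p$-toral groups are typically \emph{infinite}, and this breaks both halves of your argument. For the injective direction, the Bousfield--Kan fibre lemma needs the $\pi_1(BG)$-action on $H_*(G/\alpha(H);\F_p)$ to be nilpotent, i.e.\ some fixed power of the augmentation ideal of $\F_p[G]$ must annihilate it. Each element of $G$ acting unipotently is not enough: for $G=\Z/p^\infty$ the element $g_N-1$ has nilpotency index $p^N$ in $\F_p[G]$, so no uniform power of the augmentation ideal vanishes. Even setting this aside, the fibre lemma would only identify the new fibre with $(G/\alpha(H))\pcom$, not with $G/\alpha(H)$ itself. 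Concretely, take $H=1$ and $G=\Z/p^\infty$: then $(B\alpha)\pcom$ is the map $\ast\to (B\Z/p^\infty)\pcom\simeq K(\Z_p,2)$, whose fibre is $K(\Z_p,1)$---not discrete at all (though it \emph{is} $B\Z/p$-null, so the lemma is true here). For the non-injective direction, your claim that the fibre of $(B\alpha)\pcom$ ``contains $B\Z/p$ as a summand'' is about the fibre of $B\alpha$ \emph{before} completion; transporting this across $p$-completion is exactly the delicate step you have not addressed.

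The paper avoids computing the fibre entirely. Since $\Omega(BH\pcom)$ and $\Omega(BG\pcom)$ are $B\Z/p$-null (Remark~\ref{LoopLocal}), part (c) makes $\Omega F$ $B\Z/p$-null, and then by (e) one only has to decide whether every map $B\Z/p\to F$ is nullhomotopic. This is handled by chasing maps $B\Z/p\to BH\pcom$ and $B\Z/p\to BG\pcom$ through the fibre sequence, using the crucial input (from Lemma~1.10 of \cite{BLO3}) that $[B\Z/p,BS]=[B\Z/p,BS\pcom]$ for any discrete $p$-toral $S$, so such maps are detected by honest homomorphisms $\Z/p\to S$.
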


\begin{proof}
Parts (a), (b) and (g) follow directly from the definitions. Part (c)
follows from Section A.8, e.6 of \cite{F} together with the observations
that $F$ is the homotopy fiber over the basepoint of $B$ and that a space
is $A$-null if and only if each of its connected components is $A$-null. 

Part (d) now follows from (c) and the homotopy fiber sequence of homotopy fibers
\[ \hofib (f \pcom) \to \hofib (g \pcom \circ f \pcom) \to \hofib (g \pcom). \]

If $X$ is $A$-null, then $\Omega X$ is $A$-null by part (c) and
every map $A \to X$ is nullhomotopic because the composition of
the evaluation map $\Map(A,X) \to X$ and the constant map $ X \to \Map(A,X)$
is homotopic to the identity. 

On the other hand, $\Omega X$ is $A$-null if and only if $\Omega X_j$ is $A$-null
for each connected component $X_j$ of $X$. Then $\Map(A,X_j)_c \simeq X_j$ via the 
evaluation map by Remark \ref{LoopLocal}. And since every map $A \to X_j$ is 
nullhomotopic, the space $\Map(A,X_j)$ is connected and so equal to $\Map(A,X_j)_c$. 
Therefore $\Map(A,X_j) \simeq X_j$ via the evaluation map and $\Map(A,X) \simeq X$
via the evaluation map, that is, $X$ is $A$-null. This completes the proof of part (e)

Let $F$ be the homotopy fiber of $B\alpha \pcom$. By Remark \ref{LoopLocal}, the
spaces $\Omega (BH \pcom)$ and $\Omega (BG \pcom)$ are $B\Z/p$-null. Part (c) applied
to the homotopy fiber sequence $\Omega F \to \Omega (BH \pcom) \to \Omega (BG \pcom) $
yields that $\Omega F$ is $B\Z/p$-null. Note that the proof of Lemma 1.10 in \cite{BLO3}
shows that $[B\Z/p,BS] = [B\Z/p,BS \pcom]$ if $S$ is discrete $p$-toral.

If $\alpha \colon H \to G$ is not injective, then there exists $B\Z/p \to BH$ not nullhomotopic
such that the composition $ B\Z/p \to BH \to BG$ is nullhomotopic. Therefore $B\Z/p \to BG \pcom$
is nullhomotopic and by the comment in the paragraph above, $B\Z/p \to BH \pcom $ is not nullhomotopic.
This determines a map $B\Z/p \to F$ which is not nullhomotopic, and so $F$ is not $B\Z/p$-null.

On the other hand, if $F$ is not $B\Z/p$-null, then there exists $\Sigma^i B\Z/p \to F$ not nullhomotopic
for some $i \geq 0$. Since $\Omega F$ is $B\Z/p$-null, this can only happen if $i=0$. The composition
$B\Z/p \to F \to BH \pcom$ can not be nullhomotopic because otherwise it would lift to a map $B\Z/p \to \Omega (BG \pcom)$,
which must be nullhomotopic since $\Omega (BG \pcom)$ is $B\Z/p$-null. But the composition $ B\Z/p \to BG \pcom$ is nullhomotopic. 
By the same argument as before, these maps are induced by $B\Z/p \to BG$ not nullhomotopic 
and $ B\Z/p \to BG \to BH$ nullhomotopic. Therefore there is a monomorphism $\Z/p \to G$ such that 
$\Z/p \to G \to H$ is trivial, that is, $\alpha$ is not injective. This proves part (f).
\end{proof}

Let $f \colon |\Ll_1| \pcom \to |\Ll_2| \pcom$ be a map between classifying spaces of $p$-local compact groups. By Theorem 6.3(a)
of \cite{BLO3}, there is a homomorphism between their Sylow subgroups $\rho \colon S_1 \to S_2$ such that $f \circ \Theta_1 \simeq 
\Theta_2 \circ B\rho$, where $\Theta_i \colon BS_i \to | \Ll_i | \pcom $ for $ i = 1,2$ are the canonical inclusions induced 
by the structure morphisms $\delta_{S_i} \colon S_i \to \Aut_{\Ll_i}(S_i)$ from Definition \ref{Linking}. The main result of 
this section is the following.

\begin{theorem}
\label{HomotopyMonomorphism}
Let $f \colon |\Ll_1| \pcom \to |\Ll_2| \pcom$ be a map between classifying spaces of $p$-local compact groups which is surjective
on the fundamental groups and let $\rho \colon S_1 \to S_2 $ be a homomorphism between their Sylow subgroups such that 
$f \circ \Theta_1 \simeq \Theta_2 \circ B\rho$. Then $f$ is a homotopy monomorphism if and only if $\rho$ is injective.
\end{theorem}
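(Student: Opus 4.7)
The plan is to reduce the statement to understanding maps $B\Z/p\to F$, where $F=\hofib(f)$. Since $f$ already lands in a $p$-complete space, $F=\hofib(f\pcom)$. As a preliminary, both $|\Ll_i|\pcom$ are connected and by Remark \ref{LoopLocal} their loop spaces are $B\Z/p$-null; applying Lemma \ref{PropertiesMono}(c) to the loop fibration $\Omega F\to\Omega|\Ll_1|\pcom\to\Omega|\Ll_2|\pcom$ shows that $\Omega F$ is $B\Z/p$-null. Moreover, the long exact sequence in homotopy combined with the hypothesis that $f$ is surjective on $\pi_1$ forces $F$ to be connected. By Lemma \ref{PropertiesMono}(e), $F$ is $B\Z/p$-null if and only if every map $B\Z/p\to F$ is nullhomotopic.

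For the direction $\rho$ injective $\Rightarrow$ $f$ a homotopy monomorphism, I take an arbitrary $g\colon B\Z/p\to F$ and consider its projection $p_1\circ g\colon B\Z/p\to|\Ll_1|\pcom$, where $p_1\colon F\to|\Ll_1|\pcom$ is the canonical map. By Theorem~6.3 of \cite{BLO3}, $p_1\circ g$ is homotopic to $\Theta_1\circ B\alpha$ for some homomorphism $\alpha\colon\Z/p\to S_1$. Because $g$ factors through $F=\hofib(f)$, the composition $f\circ p_1\circ g\simeq\Theta_2\circ B(\rho\alpha)$ is nullhomotopic, and applying the same theorem to $|\Ll_2|\pcom$ forces $\rho\alpha$ to be $\Ff_2$-conjugate to the zero homomorphism, hence equal to it. Injectivity of $\rho$ then gives $\alpha=0$, so $p_1\circ g\simeq *$. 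A choice of nullhomotopy exhibits $g$ (up to homotopy) as factoring through the fiber $\Omega|\Ll_2|\pcom$ of $p_1$ in the Barratt--Puppe sequence $\Omega|\Ll_2|\pcom\to F\to|\Ll_1|\pcom\to|\Ll_2|\pcom$; since $\Omega|\Ll_2|\pcom$ is $B\Z/p$-null, that factor is nullhomotopic, and therefore so is $g$.

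For the converse, I argue the contrapositive. If $\rho$ is not injective, then $\ker\rho$ is a nontrivial subgroup of the discrete $p$-toral group $S_1$, and every nontrivial element of a discrete $p$-toral group has some nontrivial $p$-power with order exactly $p$; this gives a nonzero homomorphism $\alpha\colon\Z/p\to S_1$ with $\rho\alpha=0$. The map $\Theta_1\circ B\alpha$ is not nullhomotopic by Theorem~6.3 of \cite{BLO3} since $\alpha\neq 0$, while $f\circ\Theta_1\circ B\alpha=\Theta_2\circ B(\rho\alpha)\simeq *$. A chosen nullhomotopy lifts $\Theta_1\circ B\alpha$ to a map $\tilde g\colon B\Z/p\to F$ with $p_1\circ\tilde g\simeq\Theta_1\circ B\alpha$; since this composition is not nullhomotopic, neither is $\tilde g$. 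Hence $F$ is not $B\Z/p$-null and $f$ is not a homotopy monomorphism.

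The main technical point I expect to require care is the passage between free and pointed homotopy classes, both when invoking the classification of maps out of $B\Z/p$ and when lifting through the Barratt--Puppe fibration. The connectedness of $F$, secured by the $\pi_1$-surjectivity hypothesis, should let one rebase freely and reduce each step to its pointed analogue, making the argument go through without further work.
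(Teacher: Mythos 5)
Your proof is correct and follows essentially the same route as the paper's: both directions rest on the classification of homotopy classes of maps out of $B\Z/p$ from Theorem 6.3(a) of \cite{BLO3}, the $B\Z/p$-nullity of $\Omega(|\Ll_i|\pcom)$, and the lifting argument through the fibration sequence $\Omega(|\Ll_2|\pcom)\to F\to |\Ll_1|\pcom$. The only cosmetic difference is that for the implication ``homotopy monomorphism $\Rightarrow$ $\rho$ injective'' you run the contrapositive construction directly, whereas the paper routes it through Lemma \ref{PropertiesMono}(d),(f) together with the fact that $\Theta_1$ is a homotopy monomorphism --- the underlying topology is identical.
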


\begin{proof}
We first show that the map induced by canonical inclusion $\Theta \colon BS \to |\Ll| \pcom$ of a Sylow subgroup 
into the classifying space of a $p$-local compact group is a homotopy monomorphism. Let $F$ the the homotopy 
fiber of $\Theta \pcom$. There is a homotopy fiber sequence 
\[\Omega F \to \Omega (BS \pcom) \to \Omega (|\Ll| \pcom) . \] 
Recall that $\Omega (|\Ll| \pcom)$ and $\Omega (BS \pcom) $ are $B\Z/p$-null by Remark \ref{LoopLocal}. Then $\Omega F$ 
is also $B\Z/p$-null by part (c) of Lemma \ref{PropertiesMono}. In order to prove that $F$ is $B\Z/p$-null, we only 
need to show that any map $ B\Z/p \to F $ is nullhomotopic.

Let $\alpha \colon B\Z/p \to F$ and let us denote by $i \colon F \to BS \pcom$ the map from the homotopy fiber to $BS \pcom$. 
Since $\Theta \circ i \circ \alpha \colon B\Z/p \to | \Ll | \pcom$ is nullhomotopic, the map $i \circ \alpha \colon B\Z/p \to BS \pcom$ 
is nullhomotopic (\cite[Theorem 6.3(a)]{BLO3} and $[B\Z/p,BS] = [B\Z/p,BS \pcom]$). Therefore $\alpha$ lifts to a map 
$ \widetilde{\alpha} \colon B\Z/p \to \Omega (|\Ll| \pcom)$ which is nullhomotopic because $\Omega (|\Ll| \pcom)$ is $B\Z/p$-null. 
And so $\alpha$ is nullhomotopic.

Now let us consider the general case. Assume that $f \colon |\Ll_1| \pcom \to |\Ll_2| \pcom$ is a homotopy monomorphism which
is surjective on the fundamental groups and let $ i \colon F \to | \Ll_1 | \pcom $ be the inclusion of the homotopy fiber of $f = f \pcom$. 
From the long exact sequence of homotopy groups, $F$ must be connected. By Lemma \ref{PropertiesMono} (d), and using that $\Theta_1$ is a homotopy 
monomorphism and $F$ is connected, the composition $f \circ \Theta_1$ is also a homotopy monomorphism. 
Since $f \circ \Theta_1 \simeq \Theta_2 \circ B\rho$ we obtain that $B\rho$ is a homotopy monomorphism using Lemma \ref{PropertiesMono} (d). 
Therefore $\rho$ is injective. 

Assume $\rho$ is injective and as before, let $ i \colon F \to | \Ll_1 | \pcom $ be the inclusion of the homotopy fiber of $f$. 
The space $\Omega F$ is $B\Z/p$-null because both $ \Omega (|\Ll_1 | \pcom)$ and $\Omega (| \Ll_2 | \pcom) $ 
are $B\Z/p$-null. It remains to show that any map $\alpha \colon B\Z/p \to F$ is nullhomotopic. By Theorem 6.3(a) in \cite{BLO3} there is a group 
homomorphism $\rho_1 \colon \Z/p \to S_1$ such that $i \circ \alpha \simeq \Theta_1 \circ B\rho_1$. Since $f \circ i \circ \alpha$ 
is nullhomotopic, so is $\Theta_2 \circ B(\rho\circ \rho_1)$ and by \cite[Theorem 6.3(a)]{BLO3}, $\rho \circ \rho_1 $ is 
the trivial morphism. Since $\rho$ is injective, $\rho_1$ is trivial and $i \circ \alpha$ is nullhomotopic. 
Then $\alpha$ lifts to $\widetilde{\alpha} \colon B\Z/p \to \Omega (|\Ll_2| \pcom) $ which is nullhomotopic 
since $\Omega (|\Ll_2| \pcom)$ is $B\Z/p$-null, and so $\alpha$ is nullhomotopic.
\end{proof}

%%%%%%%%%%%%%%%%%%%%%%%%%%%%%%%%%%%%%%%%%%%%%%%%%%%%%%%%%%%%%%%%%%
\section{Fusion-preserving representations}
\label{Representations}
%%%%%%%%%%%%%%%%%%%%%%%%%%%%%%%%%%%%%%%%%%%%%%%%%%%%%%%%%%%%%%%%%

The main result in this section is Theorem \ref{FaithfulFusion}, which
shows the existence of a unitary representation of $S$ which is fusion-preserving
in the sense described below. All representations in this section are complex representations.

\begin{definition}
Let $\Ff$ be a saturated fusion system over a discrete $p$-toral
group $S$. A representation $ \rho \colon S \rightarrow GL(V) $ is
fusion-preserving or $\Ff$-invariant if for any $ P \leq S $ and any $ f \in \Hom
_{\Ff}(P,S) $, we have $ [\rho _{|P}] = [\rho_{|f(P)} \circ f] $ in $\Rep(P,GL(V))$.
\end{definition}

\begin{remark}
\label{AlperinCharacters}
Note that $\rho$ is fusion-preserving if and only if $[\rho _{|Q}] = [\rho_{|Q} \circ \phi]$
in $\Rep(Q,GL(V))$ for any $\Ff$-centric radical $Q$ and any $\phi \in \Aut_{\Ff}(Q)$. 
For any morphism $ \psi \colon P \to S $ in $\Ff$ is the composition of inclusions and
automorphisms of $\Ff$-centric radical subgroups of $S$ by Alperin's fusion theorem (Theorem \ref{Alperinfusionthm}). 
Hence if we denote by $\Rep ^{\Ff}(S,U(n))$ the set of $\Ff$-invariant $n$-dimensional representations
of $S$, we have
\[ \Rep ^{\Ff}(S,U(n)) \cong \higherlim{\Or(\Ff ^c)}{} \Rep (P,U(n)) \cong \higherlim{\Or(\Ff ^{cr})}{} \Rep (P,U(n)) \]
\end{remark}

\begin{definition}

We say that a class function $ \chi \colon S \rightarrow \C $ is
fusion-preserving or $\Ff$-invariant if for any $ P \leq S $ and any $ f \in \Hom
_{\Ff}(P,S) $, we have $ \chi_{|P} = \chi _{ |f(P) } \circ f $.

\end{definition}

\begin{lemma}
\label{FusionCharacters}

Let $ V$ be a unitary representation of $S$. Then $V$ is fusion-preserving 
if and only if its character $ \chi _V $ is fusion-preserving.

\end{lemma}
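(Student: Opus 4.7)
My plan is as follows. The forward direction is immediate, since isomorphic representations of any group have equal characters. For the converse, fix $P \leq S$ and $f \in \Hom_\Ff(P,S)$, and set $\sigma_1 = \rho_{|P}$ and $\sigma_2 = \rho_{|f(P)} \circ f$ as unitary representations $P \to U(n)$; by hypothesis they have equal characters as class functions on $P$. It suffices to produce a single unitary intertwiner that realizes the isomorphism $\sigma_1 \cong \sigma_2$ as $P$-representations.

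The key structural input is that the discrete $p$-toral group $P$ is locally finite, so it can be written as an increasing union $P = \bigcup_k P_k$ of finite subgroups. One builds such $P_k$ explicitly from the $p^k$-torsion subgroup of the identity component $P_0 \cong (\Z/p^\infty)^r$ together with a fixed finite set of coset representatives for the finite quotient $P/P_0$; since conjugation by a representative preserves the $p^k$-torsion of $P_0$ and its own $|P/P_0|$-th power lies in $P_0$, for $k$ large enough the resulting subgroup is finite, and these subgroups exhaust $P$. On each finite $P_k$, the characters of $\sigma_1|_{P_k}$ and $\sigma_2|_{P_k}$ still coincide, so classical finite-group character theory provides an intertwining element of $U(n)$.

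The remaining task is to glue these finite-level intertwiners into one element of $U(n)$ that works simultaneously for every $g \in P$, and this is the only delicate step. I would argue by compactness: consider
\[ I_k = \{A \in U(n) \mid A\sigma_1(g)A^{-1} = \sigma_2(g) \text{ for all } g \in P_k\}. \]
Each $I_k$ is cut out by finitely many polynomial conditions on the matrix entries, so it is closed in the compact group $U(n)$ and hence compact; it is non-empty by the previous paragraph; and clearly $I_{k+1} \subseteq I_k$. The finite intersection property produces $A \in \bigcap_k I_k$, which by construction conjugates $\sigma_1(g)$ to $\sigma_2(g)$ for every $g \in P$. Hence $\sigma_1 \cong \sigma_2$ as representations of $P$, so $V$ is fusion-preserving. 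The principal obstacle is precisely this gluing step, since the $P_k$-level intertwiners carry no a priori compatibility; compactness of $U(n)$ is what forces a common intertwiner to exist.
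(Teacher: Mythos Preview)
Your argument is correct. The paper's own proof is a one-line citation: it invokes Corollary~B.6 of \cite{Z15}, which states that two unitary representations of a locally finite group are isomorphic if and only if their characters agree. You have instead supplied a direct, self-contained proof of exactly that fact in the case at hand, exploiting that a discrete $p$-toral group is an increasing union of finite subgroups (this is Lemma~1.9 of \cite{BLO3}, which the paper uses elsewhere) together with the compactness of $U(n)$ to pass from finite-level intertwiners to a global one via the finite intersection property. Your route is more elementary and avoids the external reference; the paper's route is shorter on the page but defers the substance to \cite{Z15}, whose proof is in spirit the same compactness argument you wrote down.
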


\begin{proof}
This follows from Corollary B.6 of \cite{Z15}, which shows that
two unitary representations of a locally finite group are isomorphic
as representations if and only if their characters are equal.
\end{proof}

Let us consider a discrete $p$-toral group $S$ of rank $n$ with $ T = S_0 $. Given a representation $\rho \colon T \to GL(V)$, let $\ind_T^S(V)$ be the finite-dimensional vector space 
$\C S \otimes _{\C T} V $, where $ s \otimes v = st^{-1} \otimes \rho(t)(v) $ for $ t \in T $. Note that this is finite dimensional since $T$ has finite index in $S$. 
Now $S$ acts on $\ind_T^S(V)$ by $ s' \cdot s \otimes v  =  s's \otimes v $. This action is well defined and linear, so it determines a finite-dimensional representation $ \ind_T^S(\rho) \colon S \to GL(\C S \otimes _{\C T} V)$.

\begin{proposition}
\label{InductionTorus}
Let $\Ff$ be a saturated fusion system over $S$ and $\rho \colon T \to U(m)$ a representation that is invariant 
under $\Aut_{\Ff}(T)$ and such that $\chi_{\rho}(x)=0$ if $ x \in P \cap T - P_0 $ for any $P \in \Ff^{cr}$.
Then $\ind_T^S(\rho) \colon S \to U(m|S/T|)$ is a fusion-preserving representation of $S$. Moreover, if $\rho$ 
is injective, so is $ \ind_T^S(\rho)$. 
\end{proposition}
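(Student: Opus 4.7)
The plan is to reduce fusion-invariance of $\ind_T^S(\rho)$ to a character computation, exploiting the normality of $T = S_0$ in $S$ to localize all character values on $T$. First I would fix a set of coset representatives $g_1 = 1, g_2, \ldots, g_k$ for $S/T$ and apply the standard induced character formula, valid for any finite-index subgroup:
\[ \chi_{\ind_T^S(\rho)}(s) = \sum_{i \,:\, g_i^{-1}sg_i \in T} \chi_\rho(g_i^{-1}sg_i). \]
Since $T$ is characteristic in $S$, it is normal, so the condition $g_i^{-1}sg_i \in T$ collapses to $s \in T$. For $s \in T$, each $c_{g_i^{-1}}$ lies in $\Aut_S(T) \subseteq \Aut_\Ff(T)$, and invariance of $\rho$ under $\Aut_\Ff(T)$ together with Lemma \ref{FusionCharacters} gives $\chi_\rho(c_{g_i^{-1}}(s)) = \chi_\rho(s)$. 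Thus
\[ \chi_{\ind_T^S(\rho)}(s) = \begin{cases} |S/T|\,\chi_\rho(s), & s \in T, \\ 0, & s \notin T. \end{cases} \]

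For fusion-invariance, by Remark \ref{AlperinCharacters} and Lemma \ref{FusionCharacters} it suffices to verify $\chi_{\ind_T^S(\rho)}(x) = \chi_{\ind_T^S(\rho)}(\phi(x))$ for every $P \in \Ff^{cr}$, every $\phi \in \Aut_\Ff(P)$, and every $x \in P$. Observe first that $P_0 \leq S_0 = T$ (elements of $P_0$ are infinitely $p$-divisible, hence infinitely $p$-divisible in $S$), and $\phi(P_0) = P_0$ since $P_0$ is characteristic in $P$. I would then do a case split:

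\emph{(1)} If $x \in P_0$, then $\phi(x) \in P_0 \subseteq T$; the restriction $\phi|_{P_0} \in \Hom_\Ff(P_0,T)$ extends to some $w \in \Aut_\Ff(T)$ by Lemma \ref{WeylGroup}, so $\Aut_\Ff(T)$-invariance yields $\chi_\rho(\phi(x)) = \chi_\rho(w(x)) = \chi_\rho(x)$, and both sides of the target identity equal $|S/T|\chi_\rho(x)$.

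\emph{(2)} If $x \notin P_0$, then $\phi(x) \notin P_0$ as well. If $x \notin T$, the induced character vanishes at $x$; if $x \in P \cap T \setminus P_0$, the vanishing hypothesis on $\chi_\rho$ together with the computation above again forces $\chi_{\ind_T^S(\rho)}(x) = 0$. The same dichotomy applies to $\phi(x)$: either $\phi(x) \notin T$ (character vanishes) or $\phi(x) \in P \cap T \setminus P_0$ (hypothesis kills $\chi_\rho(\phi(x))$). Either way both sides vanish.

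For injectivity, suppose $\rho$ is injective and $s \in S$ acts trivially on $\ind_T^S V$. Acting on $1 \otimes v$ gives $s \otimes v = 1 \otimes v$ in $\C S \otimes_{\C T} V$, which forces $s \in T$ and $\rho(s)v = v$ for all $v$; injectivity of $\rho$ gives $s = 1$. Finally, $\ind_T^S(\rho)$ is unitary because the basis $\{g_i \otimes e_j\}$ can be declared orthonormal and $S$ acts by block-permutation combined with the unitary matrices $\rho(t_i)$. The main subtlety I anticipate is the case analysis in step (2): the vanishing hypothesis on $P \cap T \setminus P_0$ is exactly what is needed to cover the case where an element of $P$ lying off $T$ is moved by $\phi$ into $P \cap T$ (or vice versa), and verifying that this case is handled symmetrically is where care is required.
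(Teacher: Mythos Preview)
Your proof is correct and follows essentially the same route as the paper: compute the induced character via coset representatives, use $\Aut_\Ff(T)$-invariance to simplify on $T$, reduce via Alperin's fusion theorem to automorphisms of centric radicals, and then split into the cases $x \in P_0$ (handled by Lemma~\ref{WeylGroup}) versus $x \notin P_0$ (handled by the vanishing hypothesis on $P\cap T\setminus P_0$). Your explicit injectivity argument and remark on unitarity are a bit more detailed than the paper's, which simply invokes the general fact that induction preserves faithfulness.
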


\begin{proof}
Let $ \widetilde{\rho} = \ind_T^S(\rho) $. We will show that $ \widetilde{\rho}$ is $\Ff$-invariant. 
By Remark \ref{AlperinCharacters}, it suffices to show that $ \widetilde{\rho}_{|Q}$ and 
$ \widetilde{\rho}_{|Q} \circ \phi$ are isomorphic representations for any $\Ff$-centric radical $Q$ 
and any $\phi \in \Aut_{\Ff}(Q)$. 

Let $X$ be a set of coset representatives for the elements of $S/T$ and $ h \in Q$. If $h \in Q_0$, then
\[ \chi _{\widetilde{\rho}}(h) = \sum _{s \in X}\chi_{\rho}(s^{-1}hs) = |S/T| \chi_{\rho}(h) \]
where the second equality holds because $ c_s $ belongs to $\Aut_S(T) \subseteq \Aut _{\Ff} (T) $ for all $ s \in S $ 
and we are assuming that $\rho$ is $\Aut_{\Ff}(T)$-invariant. On the other hand, if $ h \in Q \cap T - Q_0$ we have
similarly
\[ \chi _{\widetilde{\rho}}(h) = \sum _{s \in X}\chi_{\rho}(s^{-1}hs) = |S/T| \chi_{\rho}(h) = 0 \]
And if $ h \in Q - Q \cap T $, we have $\chi _{\widetilde{\rho}}(h)=0$ since $\widetilde{\rho}$ is a 
representation induced from a representation of $T$. So we showed 
\[ \chi _{\widetilde{\rho}_{|Q}} (h) = \left\{\begin{array}{ll}
                                       |S/T|\chi_{\rho}(h) & \text{ if $h \in Q_0$} \\
                                        0 & \text{ otherwise, } \end{array} \right. \]

Note that $Q_0$ is the characteristic subgroup of infinitely $p$-divisible elements of $Q$. Thus, $ h \in Q_0 $ if and only 
if $ \phi (h) \in Q_0 $. In fact, $ \phi $ restricts to a homomorphism $ \widetilde{\phi} \colon Q_0 \rightarrow Q_0 $ in $\Ff$. 
By Lemma \ref{WeylGroup}, $\widetilde{\phi}$ extends to a map $ \psi \in \Aut_{\Ff}(T) $. Therefore, for $ h \in Q_0 $:
\[ \chi_{\rho}(\phi(h))= \chi_{\rho}(\widetilde{\phi}(h)) = \chi_{\rho}(\psi(h)) = \chi_{\rho}(h) \]
And if $h \in Q - Q_0$, then $\phi(h) \in Q - Q_0$ and so $\chi_{\widetilde{\rho}}(\phi(h)) = 0 $. Thus 
$ \chi_{\widetilde{\rho}_{|Q}} = \chi_{\widetilde{\rho}_{|Q} \circ \phi} $ and so $ \widetilde{\rho}_{|Q}$ and 
$ \widetilde{\rho}_{|Q} \circ \phi$ are isomorphic representations by Corollary B.6 of \cite{Z15}. The last statement is
a general property of induced representations.
\end{proof}

In order to use this proposition, we need to find representations of $T$ whose
characters satisfy the vanishing condition in the hypothesis. Given $P \leq S$,
we can construct a unitary representation of $T$ whose character vanishes on
$ P \cap T - P_0$ using the following lemma. 

\begin{lemma}
\label{EachSubgroup}
Let $ K \leq T$. There is a unitary representation $ \rho _K$ of $T$ such that $\chi _{\rho _K} (x) = 0 $ 
if $ x \in K - K_0 $ and that contains the trivial representation as a summand.
\end{lemma}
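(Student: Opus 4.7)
The plan is to construct $\rho_K$ as a finite direct sum of one-dimensional characters of $T$, each obtained by extending to $T$ a character of $K$ that factors through $K/K_0$.

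First I would note that $K$ is itself a discrete $p$-toral group, so $K/K_0$ is a finite abelian $p$-group. The regular representation $R$ of $K/K_0$ has character equal to $|K/K_0|$ at the identity and zero elsewhere, and it decomposes, with multiplicity one, as the direct sum of all the one-dimensional characters of $K/K_0$. Pulling back along the quotient $\pi \colon K \to K/K_0$ yields a finite-dimensional unitary representation $\sigma = R \circ \pi$ of $K$ whose character vanishes on $K - K_0$ and which decomposes as $\sigma = \bigoplus_j \psi_j$, where each $\psi_j \colon K \to U(1)$ is a one-dimensional character factoring through $K/K_0$; in particular the trivial character of $K$ appears in the decomposition.

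The second step is to extend each $\psi_j$ to a one-dimensional character $\widetilde{\psi}_j \colon T \to U(1)$. This is possible because $U(1) \cong \R/\Z$ is divisible, hence injective in the category of abelian groups, so any homomorphism from a subgroup of the abelian group $T$ into $U(1)$ extends to all of $T$. The trivial character of $K$ clearly extends to the trivial character of $T$.

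Finally, I would set $\rho_K = \bigoplus_j \widetilde{\psi}_j \colon T \to U(|K/K_0|)$. By construction, the restriction of $\rho_K$ to $K$ is isomorphic to $\sigma$, so for any $x \in K - K_0$ we have $\chi_{\rho_K}(x) = \chi_{\sigma}(x) = 0$, and the trivial representation of $T$ appears as a summand. The argument is essentially formal; the only non-routine ingredient is the extension of characters, which reduces to injectivity of $U(1)$ as an abelian group, so I do not anticipate any substantial obstacle.
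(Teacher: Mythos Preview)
Your proposal is correct and follows essentially the same approach as the paper: take the regular representation of the finite abelian group $K/K_0$, decompose it into one-dimensional characters, extend each to $T$ using injectivity of $U(1)$, and take the direct sum. The only cosmetic difference is that the paper performs the extension at the level of $K/K_0 \leq T/K_0$ and then precomposes with $T \to T/K_0$, whereas you first pull back to $K$ and then extend from $K \leq T$; both routes rely on the same injectivity fact and yield the same conclusion.
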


\begin{proof}
Note that $K/K_0$ is a finite abelian group. Let $\phi$ be the 
regular representation of this group. Since $K/K_0$ is abelian,
we can decompose 
\[ \phi = \bigoplus_{j=1}^m \phi_j \]
where each $\phi_j \colon K/K_0 \to S^1$ is a one-dimensional representation
and $\phi_1$ is the trivial representation. Let $\psi_1 \colon T/K_0
\to S^1$ be the trivial homomorphism. For each $j \geq 2$, since 
$S^1$ is an injective abelian group, $\phi_j$ extends
to a homomorphism $\psi_j \colon T/K_0 \to S^1$. Let $\psi$
be the direct sum of the representations $\psi_j$, which
is an extension of $\phi$ to $T/K_0$. Now we define $\rho_K$ 
to be the composition of $\psi$ and the quotient
map $ \pi \colon T \to T/K_0$. 

The representation $\psi$ contains the trivial representation
$\psi_1$ as a summand, therefore so does $\rho_K$. And if 
$x \in K - K_0 $, then $\pi(x) \neq 1$ and so
\[ \chi_{\rho_K}(x) = \chi_{\psi} (\pi(x)) = \chi_{\phi} (\pi(x)) = 0 \]
as we wanted to prove. 
\end{proof}

Now we use this lemma to show the existence of a faithful
unitary representation of $S$ which is fusion-preserving.

\begin{theorem}
\label{FaithfulFusion}

Let $\Ff$ be a saturated fusion system over a discrete $p$-toral
group $S$. There exists a faithful unitary representation of $S$
which is fusion-preserving.

\end{theorem}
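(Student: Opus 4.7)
The plan is to apply Proposition \ref{InductionTorus}. It suffices to construct a faithful finite-dimensional unitary representation $\rho$ of the identity component $T = S_0$ which is $\Aut_{\Ff}(T)$-invariant and whose character vanishes on $(P \cap T) \setminus P_0$ for every $P \in \Ff^{cr}$; then $\ind_T^S(\rho)$ will be the desired faithful $\Ff$-invariant representation of $S$.

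First I would choose representatives $K_1, \ldots, K_m$ of the $\Aut_{\Ff}(T)$-orbits on the collection $\{P \cap T : P \in \Ff^{cr}\}$ of subgroups of $T$, writing $K_i = P_i \cap T$ for the chosen $P_i$. Observe that $(K_i)_0 = (P_i)_0$, since both equal the subgroup of infinitely $p$-divisible elements of the respective ambient group. Lemma \ref{EachSubgroup} then produces, for each $i$, a unitary representation $\rho_{K_i}$ of $T$ containing the trivial representation as a summand and whose character vanishes on $K_i \setminus (K_i)_0$. Picking a faithful finite-dimensional unitary representation $\sigma \colon T \to U(n)$ (available by summing enough characters $T \to S^1$ since $T \cong (\Z/p^\infty)^n$), I set $\tau = \sigma \otimes \bigotimes_{i=1}^m \rho_{K_i}$. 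Because each $\rho_{K_i}$ contains the trivial summand, $\tau$ contains $\sigma$ as a direct summand and is therefore faithful, while $\chi_{\tau}$ still vanishes on each $K_i \setminus (K_i)_0$.

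To enforce $\Aut_{\Ff}(T)$-invariance without losing the vanishing, the key device is the multiplicative symmetrization
\[ \rho := \bigotimes_{w \in \Aut_{\Ff}(T)} \tau \circ w, \]
which is finite dimensional because $\Aut_{\Ff}(T) = \Out_{\Ff}(T)$ is finite by saturation axiom (I) applied to the characteristic, hence fully normalized, subgroup $T$. Precomposition with any $w_0 \in \Aut_{\Ff}(T)$ merely permutes the tensor factors, so $\rho$ is $\Aut_{\Ff}(T)$-invariant, and $\rho$ is faithful since each factor $\tau \circ w$ is. Since $\chi_{\rho}(x) = \prod_{w} \chi_{\tau}(w(x))$, for any $x \in P \cap T \setminus P_0$ one writes $P \cap T = w(K_i)$ for the appropriate orbit representative, obtaining $w^{-1}(x) \in K_i \setminus (K_i)_0$, which forces the $w^{-1}$-factor, and hence the whole product, to vanish.

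The main obstacle is the orbit-finiteness claim invoked above: that $\{P \cap T : P \in \Ff^{cr}\}$ consists of only finitely many $\Aut_{\Ff}(T)$-orbits, and that every $P \cap T$ for $P \in \Ff^{cr}$ is $\Aut_{\Ff}(T)$-conjugate to one of the chosen $K_i$. This is subtle because an $\Ff$-isomorphism $\phi \colon P_i \to P$ need not send $P_i \cap T$ to $P \cap T$ — the ambient subgroup $T$ is not intrinsic data for the objects of $\Ff$ — so the finiteness of $\Ff$-conjugacy classes of centric radicals does not immediately yield the desired orbit finiteness. The resolution will combine Alperin's fusion theorem (Theorem \ref{Alperinfusionthm}), which decomposes morphisms in $\Ff$ as chains of restrictions of automorphisms of $\Ff$-centric radical subgroups, with Lemma \ref{WeylGroup}(1), which identifies the $\Ff$-conjugacy class of a subgroup of $T$ with its $\Aut_{\Ff}(T)$-orbit upon passing to a fully centralized representative. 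Once this orbit structure is controlled, the construction above produces the required representation and Proposition \ref{InductionTorus} finishes the proof.
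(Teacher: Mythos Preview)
Your overall strategy is essentially the same as the paper's: build an $\Aut_{\Ff}(T)$-invariant representation of $T$ whose character vanishes on every $P\cap T\setminus P_0$ for $P\in\Ff^{cr}$, then apply Proposition~\ref{InductionTorus}. The ingredients (Lemma~\ref{EachSubgroup}, multiplicative symmetrization over $\Aut_{\Ff}(T)$, tensoring in a faithful piece) are also the same; you just perform the symmetrization after tensoring the $\rho_{K_i}$ together, whereas the paper symmetrizes each $\rho_{P\cap T}$ first and then tensors over representatives. Both orders work.

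The place where your write-up goes astray is the ``main obstacle''. You correctly note that you need only finitely many $\Aut_{\Ff}(T)$-orbits on $\{P\cap T : P\in\Ff^{cr}\}$, but your proposed resolution via Alperin's theorem and Lemma~\ref{WeylGroup} is the wrong tool and, as you yourself observe, an $\Ff$-isomorphism $P\to P'$ need not carry $P\cap T$ to $P'\cap T$, so controlling $\Ff$-conjugacy doesn't help. The much simpler fix, and the one the paper uses, is to work with $S$-conjugacy rather than $\Ff$-conjugacy. Since $T\trianglelefteq S$, for any $s\in S$ one has $c_s(P\cap T)=c_s(P)\cap T$ and $c_s(P_0)=(c_s P)_0$, while $c_s|_T\in\Aut_S(T)\leq\Aut_{\Ff}(T)$. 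Thus $S$-conjugate subgroups in $\Ff^{cr}$ give $\Aut_{\Ff}(T)$-conjugate intersections with $T$. Finiteness of $S$-conjugacy classes in $\Ff^{cr}$ (Lemma~3.2(a) and Corollary~3.5 of \cite{BLO3}) then immediately gives the orbit-finiteness you need. With this correction your argument is complete and matches the paper's.
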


\begin{proof}
By Lemma \ref{EachSubgroup}, for each $ P $ in $\Ff^{cr}$ there is a unitary representation $ \rho _{P \cap T}$ of $T$ 
such that $\chi _{\rho _{P \cap T}} (x) = 0 $ if $ x \in P \cap T - P_0 $. Recall that $ \Aut_{\Ff}(T)$ is a finite 
group and consider
\[  \phi _P = \bigotimes _{w \in \Aut_{\Ff}(T)} w^* \rho _{P \cap T} , \]
where $w^* \rho_{P \cap T} = \rho_{P \cap T} \circ w $. This representation is invariant under the action of $\Aut_{\Ff}(T)$ 
and if $x \in P \cap T - P_0$, then:
\[ \chi _{\phi_P}(x) = \prod _{w \in \Aut_{\Ff}(T)} \chi_{w^*\rho_{P \cap T}}(x) = \chi_{\rho_{P \cap T}}(x) \cdot \prod _{1_T \neq w \in \Aut_{\Ff}(T)} \chi_{w^*\rho_{P \cap T}}(x) = 0 . \]
If $ P $ and $Q$ are conjugate by an element $ s \in S $, the conjugation $c_s$ takes $P \cap T $ to $ Q \cap T $ and $ P_0 $ to $Q_0$. Let $ y \in Q \cap T - Q_0 $ and set $ x = c_s^{-1}(y) \in P \cap T - P_0 $. Then
\[ \chi_{\phi_P}(y) = \chi_{\phi_P}(c_s(x)) = \chi_{\phi_P \circ c_s}(x) = \chi_{\phi_P}(x) = 0 ,\]
since $c_s \in \Aut_{\Ff}(T)$ and $ \phi_P$ is $\Aut_{\Ff}(T)$-invariant. Therefore $\phi_P$ is such that 
$\chi_{\phi _P}(y) = 0 $ if $ y \in Q \cap T - Q_0 $ for any $ Q $ that is $S$-isomorphic to $P$.

Since there are a finite number of $S$-conjugacy classes of subgroups in $\Ff^{cr}$ (Lemma 3.2(a) and Corollary 3.5 in \cite{BLO3}), we can construct
\[ \psi = \bigotimes _{[P] \in \Ff^{cr}} \phi _P \]
where $[P]$ denotes the $S$-conjugacy class of $P$ in $\Ff^{cr}$. It is clear that $ \chi_{\psi}(x) = 0 $ 
if $x \in Q \cap T - Q_0$ for any $ Q $ in $\Ff^{cr}$. By Proposition \ref{InductionTorus}, the existence of a fusion-preserving representation of $S$ follows.

If $ \psi $ is faithful, Proposition \ref{InductionTorus} shows the existence of a faithful fusion-preserving representation. 
Otherwise, given a faithful representation $\alpha$ of $T$, such as the standard representation of $T$, we consider
\[ \alpha_f = \bigoplus _{w \in \Aut_{\Ff}(T)}  w^* \alpha  \]
Note that $ \rho = \psi \otimes \alpha _f $ is invariant under the action of $\Aut_{\Ff}(T)$ and if $y \in P \cap T - P_0 $ 
for some $ P $ in $\Ff^{cr}$ then 
\[ \chi_{\rho}(y) = \chi_{\psi}(y) \chi_{\alpha_f}(y) = 0 \]
Each $\rho_P$ contains the trivial $1$-dimensional representation as a subrepresentation, and therefore so does
$\psi$. The representation $\rho$ is then faithful because it has $\alpha$ as a subrepresentation, which is faithful. 
The result follows from Proposition \ref{InductionTorus}.
\end{proof}  

\begin{proposition}
\label{DirectSummand}
Given a representation $\rho$ of $S$, there exists a fusion-invariant
representation $\alpha$ of $S$ such that $\rho$ is a direct summand
of $\alpha$.
\end{proposition}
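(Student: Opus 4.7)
The plan is to reuse the machinery from Theorem \ref{FaithfulFusion} by inducing a suitable $\Aut_\Ff(T)$-invariant representation of the identity component $T = S_0$ up to $S$. Since $T$ is abelian and normal in $S$, it is fully normalized, and axiom (I) together with $\Inn(T) = 1$ shows that $\Aut_\Ff(T)$ is finite, so averaging over it is harmless.

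First I would produce an $\Aut_\Ff(T)$-invariant representation of $T$ that still sees $\rho|_T$ as a summand, namely
\[ \beta = \bigoplus_{w \in \Aut_\Ff(T)} w^*(\rho|_T); \]
the summand $w = \id$ is $\rho|_T$ itself. Next I would take the representation $\psi = \bigotimes_{[P] \in \Ff^{cr}} \phi_P$ of $T$ built inside the proof of Theorem \ref{FaithfulFusion}: recall that $\psi$ is $\Aut_\Ff(T)$-invariant and that $\chi_\psi$ vanishes on $P \cap T - P_0$ for every $P \in \Ff^{cr}$. The one extra observation I need is that $\psi$ contains the trivial representation of $T$ as a direct summand, which follows because each $\rho_{P \cap T}$ of Lemma \ref{EachSubgroup} does, the property survives pullback by any $w \in \Aut_\Ff(T)$, and a tensor product of representations containing the trivial representation still contains it.

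With these pieces in hand I would set $\gamma = \beta \otimes \psi$ and $\alpha = \ind_T^S(\gamma)$. The hypotheses of Proposition \ref{InductionTorus} transfer from $\psi$ to $\gamma$ — invariance is clear from both tensor factors, and $\chi_\gamma = \chi_\beta \cdot \chi_\psi$ vanishes wherever $\chi_\psi$ does — so $\alpha$ is fusion-preserving. To see $\rho$ is a direct summand of $\alpha$, note that $\rho|_T \subseteq \beta \subseteq \gamma$ (the second inclusion using the trivial summand of $\psi$), hence $\ind_T^S(\rho|_T) \subseteq \alpha$. The projection formula, valid because $[S:T]$ is finite, gives $\ind_T^S(\rho|_T) \cong \rho \otimes \ind_T^S(\mathbf{1})$, and $\ind_T^S(\mathbf{1})$, being the permutation representation on the finite set $S/T$, contains the trivial summand spanned by $\sum_{gT \in S/T} gT$; hence $\rho$ sits inside $\alpha$. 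There is no genuine obstacle here beyond isolating the (implicit) fact that $\psi$ has a trivial summand, which is what simultaneously keeps $\rho|_T$ visible and preserves the vanishing hypothesis for Proposition \ref{InductionTorus}.
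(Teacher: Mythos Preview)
Your proposal is correct and follows essentially the same route as the paper: both build $\alpha = \ind_T^S\bigl(\psi \otimes \bigoplus_{w \in \Aut_\Ff(T)} w^*(\rho|_T)\bigr)$, invoke Proposition~\ref{InductionTorus} for fusion-invariance, and use that $\psi$ contains the trivial summand to keep $\rho|_T$ visible. Your argument is in fact more explicit than the paper's at the final step, spelling out the projection formula $\ind_T^S(\rho|_T) \cong \rho \otimes \ind_T^S(\mathbf{1})$ and the trivial summand of the permutation module, whereas the paper simply asserts that $\alpha$ contains $\rho$.
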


\begin{proof}
Let $\psi$ be the representation of $T$ constructed in the proof of 
Theorem \ref{FaithfulFusion}. This representation is invariant under
the action of $\Aut_{\Ff}(T)$ and such that $\chi_{\psi}(x)=0$ if 
$x \in Q \cap T - Q_0 $ for any $ Q $ in $\Ff^{cr}$. Now consider 
the following representation of $T$ 
\[ \beta = \psi \bigotimes \left( \bigoplus _{w \in \Aut_{\Ff}(T)} w^* \res_T^S(\rho) \right) \]
This representation still satisfies the same two conditions as $\psi$
and moreover, it contains $\res _T^S(\rho)$ as a direct summand because
$\psi$ contains the trivial $1$-dimensional representation of $T$ as
a direct summand. By Proposition \ref{InductionTorus}, the representation
$\alpha = \ind_T^S(\beta)$ is fusion-invariant and it contains $\rho$
as a direct summand.
\end{proof}

%%%%%%%%%%%%%%%%%%%%%%%%%%%%%%%%%%%%%%%%%%%%%%%%%%%%%%%%%%%%%%%%%%
\section{Embeddings of classifying spaces}
\label{Embeddings}
%%%%%%%%%%%%%%%%%%%%%%%%%%%%%%%%%%%%%%%%%%%%%%%%%%%%%%%%%%%%%%%%%%

The aim of this section is to find criteria for the
existence of unitary embeddings of $p$-local compact
groups. 

\begin{definition}
Let $X$ be a topological space. A unitary embedding at $p$ of $X$ is 
a homotopy monomorphism $ X \to BU(N) \pcom $ for some $N > 0 $. If 
$(S,\Ff,\Ll)$ is a $p$-local compact group, a unitary embedding of 
$(S,\Ff,\Ll)$ is a unitary embedding at $p$ of $|\Ll| \pcom$. 
\end{definition}

Since the prime $p$ will be evident from the discussion, from now
on we will simply say unitary embedding.

Because of the homotopy decomposition of the classifying space of
$(S,\Ff,\Ll)$ in terms of the classifying spaces of its
$\Ff$-centric subgroups given in Proposition \ref{Rigidification}, the
restriction map $[|\Ll| \pcom , BU(n) \pcom] \to [BS , BU(n) \pcom]$ 
factors through an inverse limit:
\[ \Psi_n \colon \left[ |\Ll| \pcom , BU(n) \pcom \right] = 
\left[ \hocolim{\Or (\Ff ^c)}{} \widetilde{B}P, BU(n) \pcom \right] \longrightarrow 
\higherlim{\Or (\Ff^c)}{} \left[ \widetilde{B}P,BU(n) \pcom \right] \]
And
\[ \higherlim{\Or (\Ff^c)}{} \left[ \widetilde{B}P,BU(n) \pcom  \right] \cong 
\higherlim{\Or(\Ff ^c)}{} \Rep (P,U(n)) \cong \Rep ^{\Ff}(S,U(n)) \]
where the first isomorphism holds by the argument in the proof of Theorem 1.1 (i) in \cite{JMO2}, which does
not need the discrete $p$-toral group to be an approximation of a $p$-toral group. And the second isomorphism
holds by Remark \ref{AlperinCharacters}. 

We are interested in finding elements in the image of $\Psi _n$ for some $n$ which are faithful representations
because Theorem \ref{HomotopyMonomorphism} clearly implies the following result.

\begin{proposition}
\label{ExistenceEmbeddings}
If there exists a faithful fusion-invariant representation in the
image of $\Psi _n$ for some $n$, then there exists a unitary
embedding of $ (S,\Ff,\Ll) $.
\end{proposition}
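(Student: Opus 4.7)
The plan is to invoke Theorem \ref{HomotopyMonomorphism} directly, viewing $BU(n)\pcom$ as the classifying space of the $p$-local compact group associated to the compact Lie group $U(n)$ (as recalled at the end of Section \ref{Background}, with Sylow given by a maximal discrete $p$-toral subgroup $S' \leq U(n)$).

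First I would unpack the hypothesis: let $\rho \colon S \to U(n)$ be the given faithful fusion-invariant representation, and choose $f \colon |\Ll|\pcom \to BU(n)\pcom$ with $\Psi_n([f]) = [\rho]$. By the definition of $\Psi_n$ as restriction along $\Theta \colon BS \to |\Ll|\pcom$, the composite $f \circ \Theta$ is homotopic to $B\rho\pcom$.

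Next I would produce the homomorphism between Sylows demanded by Theorem \ref{HomotopyMonomorphism}. Since $\rho$ is faithful, $\rho(S)$ is a discrete $p$-toral subgroup of $U(n)$ isomorphic to $S$, so after possibly conjugating $\rho$ in $U(n)$ (which only changes $B\rho$ up to homotopy) we may assume $\rho(S) \leq S'$. This yields an injective homomorphism $\bar{\rho} \colon S \to S'$ with $\rho = i \circ \bar{\rho}$ for $i \colon S' \hookrightarrow U(n)$, and hence $f \circ \Theta \simeq \Theta' \circ B\bar{\rho}$, where $\Theta' \colon BS' \to BU(n)\pcom$ is the canonical inclusion.

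Finally I would check the two remaining hypotheses of Theorem \ref{HomotopyMonomorphism}: surjectivity on $\pi_1$ is automatic since $BU(n)\pcom$ is simply connected, and $\bar{\rho}$ is injective by construction. The theorem then gives that $f$ is a homotopy monomorphism, hence a unitary embedding of $(S,\Ff,\Ll)$. There is no serious obstacle; the only content beyond routine bookkeeping is the identification of $BU(n)\pcom$ as a $p$-local compact group classifying space with Sylow $S'$, which is part of the general theory of Section \ref{Background}.
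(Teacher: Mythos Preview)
Your proposal is correct and follows exactly the approach of the paper: the authors simply remark that Theorem \ref{HomotopyMonomorphism} ``clearly implies'' the proposition, and your argument spells out the routine verifications (identifying $BU(n)\pcom$ as the classifying space of a $p$-local compact group, conjugating $\rho$ into a maximal discrete $p$-toral $S' \leq U(n)$, and noting that $BU(n)\pcom$ is simply connected so the $\pi_1$-surjectivity hypothesis is vacuous).
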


The behaviour of the maps $\Psi _n $ was considered in \cite{W} in more generality, where obstructions to the 
surjectivity and injectivity were found. Let $\Ca$ be a small category and $X \colon \Ca \to \Top$ a 
functor. The homotopy colimit of the diagram $X$ is the space
\[ \hocolim{\Ca}{} X = \left( \coprod_{n\geq 0} \coprod_{c_0 \to \cdots \to c_n} X(c_0) \times \Delta^n \right) \Bigg{/} {\sim} , \]
where $\sim$ is induced by the usual face and degeneracy identifications \cite[Ch. XII]{BK}. We introduce a filtration 
of the homotopy colimit induced by the skeleta filtration of the nerve of $\Ca$. Namely, for each $n \geq 0$ we define
\[ F_n X = \left( \coprod_{m \leq n} \coprod_{c_0 \to \cdots \to c_m} X(c_0) \times \Delta^m \right) \Bigg{/} {\sim} , \]
where $\sim$ is induced by the same face and degeneracy identifications as before. Observe that $F_0 X$ is the disjoint 
union of the spaces $X(c)$, where $c$ runs over the set of objects of $\Ca$, and $F_1X$ is the union of the mapping 
cylinders of all maps $X(f)$, where $f$ runs over the set of all morphisms in $\Ca$.

An element of $ \varprojlim _{\Ca} [X(-),Y]$ is given by a set of maps $f_c \colon X(c) \to Y$, one for each object $c$ of $\Ca$ 
together with homotopies $f_{d} \circ X(\varphi) \simeq f_c$ for every morphism $\varphi$ from $c$ to $d$ in $\Ca$. Such
an element defines a map $f_1 \colon F_1 X \to Y$ that satisfies $ f_1|_{X(c)} = f_c$. The strategy to extend $f_1$ up to 
homotopy to a map from the homotopy colimit of $X$ to $Y$ is to extend it by induction on the spaces $F_n X$.

Given a map $\tilde{f}_n \colon F_n X \to Y$ whose restriction to $X(c)$ is homotopic to $f(c)$, Z.~Wojtkowiak developed an obstruction 
theory for extending it to $F_{n+1} X$ without changing it on $F_{n-1} X$. The existence of such an extension depends on the vanishing 
of a certain obstruction class in 
\[ \higherlim{\Ca}{n+1} \pi_n \left( \Map (X(c),Y)_{f(c)} \right) \]
For $n=1$, this expression involves a functor into the category of groups and representations whose $\varprojlim^2$ term is described in 
\cite{W}. When these groups are abelian, the definition of $\varprojlim^2$ coincides with the usual one from homological algebra. Once 
the map has been extended to $F_2 X$, a choice of homotopies allows us to define functors $\pi_n(\Map (X(c),Y)_{f(c)})$ into abelian groups for $n>1$.

Given two maps $\tilde{f}_1,\tilde{f}_2$ from the homotopy colimit of $X$ to $Y$ whose restrictions to $X(c)$ are homotopic to $f(c)$, 
there is also an obstruction theory for the construction of a homotopy $\tilde{f}_1 \simeq \tilde{f}_2$ in \cite{W}.
Clearly, $\tilde{f}_1$ and $\tilde{f}_2$ give rise to a homotopy $\tilde{f}_1|_{F_0 X} \simeq \tilde{f}_2|_{F_0 X}$.
The idea is to extend the homotopy $H_0$ inductively to $I \times F_n X$. Given a homotopy $\tilde{f}_1|_{F_{n-1} X} \simeq \tilde{f}_2|_{F_{n-1} X}$, 
the possibility of extending it to a homotopy between the restrictions of $\tilde{f}_1$ and $\tilde{f}_2$ to $F_n X$ without
changing its values on $F_{n-2}X$ depends  on the vanishing of an obstruction class in $\varprojlim^{n}_{\Ca} \pi_n(\Map (X(c),Y)_{f(c)})$. 

In our case, given $ \rho $ in $ \Rep ^{\Ff} (S,U(n)) $, the obstructions for $\rho$ to be in the image or to have a unique preimage lie 
in higher limits of the functors:
\[ F_i^{\rho} \colon  \Or (\Ff ^c)^{op} \to \Z _{(p)}\text{-Mod} \]
\[  \qquad \qquad \qquad \qquad \qquad \qquad P  \mapsto \pi _i \left( \Map(\widetilde{B}P,BU(n) \pcom)_{\widetilde{B}\rho_{|P}} \right). \]
In this paper we are concerned with the surjectivity of the maps $\Psi_n$ up to stabilization. For this weaker property, 
we found that we can actually refine our obstructions. For that purpose, we start by recalling some definitions from \cite{Lu}.

\begin{definition}
A category $\Or$ is an EI-category if every endomorphism is an isomorphism. An EI-category $\Or$ is finite if the set 
of isomorphism classes of objects is finite and the set of morphisms between any two given objects is finite.
\end{definition} 

Note that $\Or(\Ff ^{cr})$ is a finite EI-category by Lemma 2.5 and Corollary 3.5 in \cite{BLO3}.

\begin{definition}
\label{Length}
Let $\Or$ be a finite EI-category. Given two objects $x_0$ and $x_1$ in $\Or$ we say $x_0 < x_1$
if there is a morphism from $x_0$ to $x_1$ which is not an isomorphism. The length $l(\Or)$ of $\Or$ is the maximum integer $n$
such that there exist $n+1$ different objects $ x_0, x_1, \ldots, x_n $ with $ x_0 < x_1 < \ldots < x_n $.
\end{definition}

\begin{proposition}
\label{BoundedLimits}
Let $\Phi\colon \Or(\Ff^{cr}) \to \Z_{(p)}$-Mod be a functor such that $\Phi(P)$ is a projective $\Z_{(p)}$-module
for any object $P$. Then 
\[ \higherlim{\Or(\Ff^{cr})}i \Phi = 0 \]
for all $ i > l(\Or (\Ff ^{cr})) $.
\end{proposition}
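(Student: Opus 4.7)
The strategy is to model $\higherlim{\Or(\Ff^{cr})}{*} \Phi$ by a cochain complex that is effectively supported in degrees at most $l := l(\Or(\Ff^{cr}))$, using the EI-structure of $\Or(\Ff^{cr})$ together with the projectivity hypothesis.

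I would begin with the standard bar model for higher limits, namely $\higherlim{\Or(\Ff^{cr})}{i}\Phi = H^i(C^*(\Or(\Ff^{cr});\Phi))$ where
\[ C^n(\Or(\Ff^{cr});\Phi) = \prod_{P_0 \to P_1 \to \cdots \to P_n} \Phi(P_0), \]
with the product taken over composable chains of morphisms in $\Or(\Ff^{cr})$ and the usual cosimplicial differential. This complex computes the higher limits for any functor into $\Z_{(p)}$-modules, so no hypothesis is needed at this stage.

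Next, I would exploit the EI-structure to filter this complex by the number of non-isomorphism arrows appearing in each chain. Since $\Or(\Ff^{cr})$ is a finite EI-category by Lemma 2.5 and Corollary 3.5 of \cite{BLO3}, each morphism factors uniquely as an isomorphism followed by an inclusion up to conjugacy, and the definition of $l$ says that no chain can contain more than $l$ strict non-isomorphism arrows. The filtration therefore terminates at step $l$, and the associated graded pieces organize into columns indexed by strict chains $[Q_0]<[Q_1]<\cdots<[Q_k]$ of iso classes of $\Ff$-centric radicals with $k \le l$, enriched by automorphism data inside each iso class.

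The resulting spectral sequence has $E_1$-page whose column $k$ is concentrated in positions $0 \le k \le l$, and whose row $q$ records group-cohomological contributions of the finite groups $\Aut_{\Or(\Ff^{cr})}(Q) = \Out_{\Ff}(Q)$ acting on the $\Z_{(p)}$-projective modules $\Phi(Q)$. The projectivity hypothesis on $\Phi(Q)$ enters precisely here: combined with saturation axiom (I), which furnishes $\Out_S(Q) \in \Syl_p(\Out_{\Ff}(Q))$ for fully normalized $Q$, one applies a Sylow transfer-restriction argument to collapse the row direction onto $q = 0$. Once the spectral sequence is concentrated in the single row $q = 0$ with $k \le l$ columns, the vanishing $\higherlim{\Or(\Ff^{cr})}{i}\Phi = 0$ for $i > l$ follows at once.

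The main obstacle lies in the last step: one must leverage $\Z_{(p)}$-projectivity together with the Sylow structure provided by saturation to force the positive-degree group-cohomology contributions of $\Out_{\Ff}(Q)$ on $\Phi(Q)$ to vanish after passing to the $p$-local setting. This collapse is the delicate part of the argument, since $\Z_{(p)}$-projectivity by itself does not imply $H^{>0}(\Out_{\Ff}(Q);\Phi(Q)) = 0$; one needs the saturation axiom to compensate via transfer from $\Out_S(Q)$.
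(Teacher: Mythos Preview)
Your spectral-sequence setup is sensible, but the collapse step you propose in the last paragraph does not work. Transfer from a Sylow $p$-subgroup only gives a split injection
\[
H^q\bigl(\Out_{\Ff}(Q);\Phi(Q)\bigr)\;\hookrightarrow\;H^q\bigl(\Out_S(Q);\Phi(Q)\bigr)
\]
after $p$-localisation; it says nothing about the target vanishing. Here $\Out_S(Q)$ is a (possibly nontrivial) finite $p$-group and $\Phi(Q)$ is merely a free $\Z_{(p)}$-module, so $H^{>0}(\Out_S(Q);\Phi(Q))$ is typically nonzero: already $H^2(\Z/p;\Z_{(p)})\cong\Z/p$. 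Hence your $E_1$-page is \emph{not} concentrated in the row $q=0$, and with columns bounded by $l$ but rows unbounded the spectral sequence alone does not force $\higherlim{\Or(\Ff^{cr})}{i}\Phi=0$ for $i>l$. The saturation axiom buys you the Sylow statement $\Out_S(Q)\in\Syl_p(\Out_{\Ff}(Q))$, but that is exactly the wrong direction for what you need.

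The paper avoids this entirely by working on the other variable of the $\operatorname{Ext}$ pairing. One has $\higherlim{\Or(\Ff^{cr})}{i}\Phi\cong\operatorname{Ext}^i(\underline{\Z_{(p)}},\Phi)$ in the functor category, so it suffices to bound the projective dimension of the \emph{constant} functor $\underline{\Z_{(p)}}$, independently of $\Phi$. Corollary 5.5 of \cite{BLO3} gives a uniform degree above which all higher limits over $\Or(\Ff^{cr})$ vanish, so $\underline{\Z_{(p)}}$ has finite projective dimension. Then L\"uck's bound for modules of finite projective dimension over a finite EI-category (Proposition 17.31 of \cite{Lu}) forces this dimension to be at most $l(\Or(\Ff^{cr}))$; the point is that the splitting modules $S_x\underline{\Z_{(p)}}$ vanish except at the maximal class $[S]$, where $\Out_{\Ff}(S)$ has order prime to $p$ so the remaining splitting module is projective. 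In particular the pointwise-projectivity hypothesis on $\Phi$ is not actually used in the paper's argument.
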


\begin{proof}
The argument of this proof appears in Lemma 5.5 of \cite{N}. Denote the constant functor by  
$ \underline{\Z_{(p)}} \colon \Or (\Ff ^{cr}) \to \Z_{(p)}-\Mod$. By Corollary 5.5 in \cite{BLO3}, there exists $n > 0$ such that
\[  \higherlim{\Or(\Ff^{cr})}j F = 0 \]
for any functor $ F \colon \Or (\Ff ^{cr}) \to \Z_{(p)}-\Mod$ and any $ j > n $. A homological algebra 
argument shows that $\underline{\Z_{(p)}}$ has a finite projective resolution. By Proposition 17.31 in \cite{Lu}, this homological dimension 
is bounded by $l(\Or (\Ff ^{cr}))$.
\end{proof}

Let $ \rho \colon S \to U(n) $ be an $\Ff$-invariant representation. For any $P \leq S$, let $\Irr(P,\rho)$ be the set of 
isomorphism classes of irreducible subrepresentations of $\rho _{|P}$ and let $ R(P,\rho) $ be the subgroup of $R(P)$ generated by $\Irr(P,\rho)$.
Note that for any map $ f\colon P \to Q $ in $\Ff$, the representations $ \rho _{|Q} \circ f$ and $\rho _{|P} $ are isomorphic, and so the induced homomorphism $ R(Q) \to R(P) $ restricts to a group homomorphism $ R(Q,\rho) \to R(P,\rho) $. We will consider the following contravariant functor 
\[ R(-,\rho) \pcom \colon \Or (\Ff ^{cr}) \to \Z _{(p)} -\Mod \]
\[ \qquad \qquad \qquad \qquad \qquad \qquad P  \mapsto R(P,\rho) \otimes _{\Z} \Z \pcom \cong \mathop{\bigoplus} \limits_{\Irr(P,\rho)} \Z \pcom .  \]
Proposition \ref{NaturalIsomorphism} will show that this functor is naturally isomorphic to $F_{2j}^{\rho}$ if the 
representation $\rho$ is faithful and the centralizers of the subrepresentations of $\rho$ 
satisfy a stability condition. 

First we need an auxiliary lemma that extends Proposition 4.4 of \cite{DZ} to discrete $p$-toral 
groups (see also Theorem 1.1 (ii) of \cite{JMO2} and Theorem 5.1 of \cite{N} for analogous results). 
We introduce some notation inspired by this article. For a compact Lie group $G$, let $\B G$ be the 
topological category with one object and $G$ as the space of automorphisms of this object. Given two 
categories $C$ and $D$ and a functor $F\colon C \to D$, let $\Hom(C,D)_F$ be the category whose only object 
is $F$ and whose morphisms are natural transformations from $F$ to $F$. Note that if $G$ and $H$ are
topological groups and $f\colon H \to G $ is a group homomorphism there is an equivalence of topological categories
\[ \B C_G(f(H)) \to \Hom(\B H,\B G)_{\B f} \]
that takes the object of $\B C_G(f(H)) $ to the functor $\B f$, and the morphism $g \in C_G(f(H))$ 
to the natural transformation given by $g \in G$.

Let $P$ be a discrete $p$-toral group and $G$ a compact Lie group such that $\pi_0(G)$ is a finite
$p$-group . Geometric realization of functors induces a map
\[ B\Hom(\B P, \B G)_{\B f} \to \Map (BP,BG)_{Bf} \]
and the $p$-completion map $ \iota \colon BG \to BG \pcom $ induces a map
\[ \Map (BP,BG)_{Bf} \to \Map (BP, BG \pcom) _{Bf \circ \iota} \] 
Let $P$ be the union of an increasing sequence $P_1 \leq P_2 \leq \ldots $ of finite subgroups, as in
Lemma 1.9 of \cite{BLO3}. Since $\pi_0(G)$ is a finite $p$-group, by Proposition 6.22 of \cite{DW}, 
there is an integer $m$ such that if  $ n \geq m $, then the restriction map $ \Map( BP,BG \pcom)_{Bf \circ \iota} \to \Map( BP_n,BG \pcom)_{Bf \circ \iota}$
is a homotopy equivalence. Note that we abusing the notation by using $f$ for the restriction 
to $P_n$. And $\Map( BP_n,BG \pcom)_{B\rho \circ \iota}$ is $p$-complete by \cite[Theorem 3.2]{JMO3},
therefore we obtain a map
\[ \left[ B\Hom( \B P,\B G)_{\B f } \right] \pcom \to \Map ( BP,BG \pcom)_{Bf \circ \iota } \]

\begin{lemma}
\label{ClassifyingMap}
If $P$ is a discrete $p$-toral group, $G$ is a compact Lie group such that $\pi_0(G)$ is a finite $p$-group, and $\rho\colon P \to G$ is a homomorphism, 
then the map constructed above
\[ \left[ B\Hom( \B P,\B G)_{\B \rho } \right] \pcom \to \Map ( BP,BG \pcom)_{B\rho \circ \iota} \]
is a homotopy equivalence.
\end{lemma}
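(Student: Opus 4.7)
The plan is to reduce to the finite case and invoke the Dwyer--Zabrodsky theorem together with Proposition 6.22 of \cite{DW} cited just before the statement. I would structure the argument in three steps.

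First, identify the source. By the equivalence of categories $\B C_G(\rho(P)) \to \Hom(\B P,\B G)_{\B\rho}$ recalled in the paragraph before the lemma, the geometric realization $B\Hom(\B P,\B G)_{\B\rho}$ is homotopy equivalent to $BC_G(\rho(P))$, and $[BC_G(\rho(P))]\pcom$ is what sits on the left of the map. Next, write $P$ as the union of an increasing sequence $P_1 \leq P_2 \leq \cdots$ of finite $p$-subgroups as in Lemma 1.9 of \cite{BLO3}. The centralizers $C_G(\rho(P_n))$ form a decreasing sequence of closed subgroups of the compact Lie group $G$ whose intersection is $C_G(\rho(P))$; by compactness (the Lie algebras stabilize in finitely many steps and the component groups then form a decreasing sequence of finite groups) there exists $m_1$ such that $C_G(\rho(P_n)) = C_G(\rho(P))$ for all $n \geq m_1$.

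Second, handle the target. Since $\pi_0(G)$ is a finite $p$-group, Proposition 6.22 of \cite{DW} (invoked in the paragraph preceding the statement) gives an integer $m_2$ so that for $n \geq m_2$ the restriction map
\[ \Map(BP, BG\pcom)_{B\rho \circ \iota} \longrightarrow \Map(BP_n, BG\pcom)_{B\rho \circ \iota} \]
is a homotopy equivalence. Choose $n \geq \max(m_1,m_2)$.

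Third, glue the two sides via the classical finite Dwyer--Zabrodsky result. For the finite $p$-group $P_n$ and the compact Lie group $G$ with $\pi_0(G)$ a finite $p$-group, Proposition 4.4 of \cite{DZ} provides a homotopy equivalence
\[ BC_G(\rho(P_n))\pcom \xrightarrow{\;\simeq\;} \Map(BP_n,BG\pcom)_{B\rho\circ \iota}, \]
which is precisely the restriction-to-$P_n$ version of the map constructed in the excerpt. I would then assemble the commutative diagram
\[
\xymatrix@C=1.2em{
[B\Hom(\B P,\B G)_{\B\rho}]\pcom \ar[r] \ar[d]_{\simeq} & \Map(BP,BG\pcom)_{B\rho\circ\iota} \ar[d]^{\simeq} \\
[B\Hom(\B P_n,\B G)_{\B\rho}]\pcom \ar[r]^-{\simeq} & \Map(BP_n,BG\pcom)_{B\rho\circ\iota}
}
\]
where the left vertical arrow comes from restricting natural transformations to $\B P_n$ (which corresponds under the equivalence of categories to the inclusion $C_G(\rho(P)) \hookrightarrow C_G(\rho(P_n))$, an equality for $n \geq m_1$ by step one and hence a homotopy equivalence after $p$-completion), the right vertical arrow is the equivalence from step two, and the bottom arrow is the Dwyer--Zabrodsky equivalence. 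Commutativity of the square is essentially formal, since every arrow is induced by the same restriction along $\B P_n \hookrightarrow \B P$ followed by geometric realization and $p$-completion.

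The main obstacle will be checking that this square actually commutes on the nose (up to the chosen homotopies); all the building blocks are classical, but one has to verify that the map in the statement — defined as the composite of geometric realization, the $p$-completion map $\iota$, and the inverse of the restriction equivalence from Proposition 6.22 of \cite{DW} — genuinely agrees with the composite obtained by first restricting to a finite $P_n$ and then applying the finite Dwyer--Zabrodsky equivalence. Once that compatibility is in place, the three equivalences on the sides of the square force the top arrow to be a homotopy equivalence as well.
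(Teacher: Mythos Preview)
Your proposal is correct and follows essentially the same approach as the paper: write $P$ as an increasing union of finite $p$-subgroups, stabilize both the centralizer side (via the artinian property of closed subgroups of a compact Lie group) and the mapping-space side (via Proposition 6.22 of \cite{DW}), then invoke the finite case and conclude from a commutative diagram. The only cosmetic differences are that the paper draws a $2\times 3$ diagram (inserting the intermediate column $[\Map(BP,BG)_{B\rho}]\pcom$) and cites \cite[Theorem 3.2]{JMO3} rather than \cite[Proposition 4.4]{DZ} for the bottom row; these are interchangeable.
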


\begin{proof}
Let $P$ be the union of an increasing sequence $P_1 \leq P_2 \leq \ldots $ of finite subgroups, as in
Lemma 1.9 of \cite{BLO3}. Since $G$ is artinian with respect to closed subgroups, there is an integer 
$k$ such that if $ n \geq k $, then $C_G(\rho(P)) = C_G(\rho(P_n)) $. By Proposition 6.22 of \cite{DW},
there exists an integer $m$ such that if $ n \geq m $, then the restriction map 
$ \Map( BP,BG \pcom )_{B\rho \circ \iota} \to \Map( BP_n,BG \pcom )_{B\rho \circ \iota}$ is a homotopy equivalence.

Let $N$ be the maximum of $k$ and $m$ and consider the following commutative diagram
\[
\diagram
\left[ B\Hom(\B P,\B G)_{\B \rho} \right] \pcom \rto \dto& \left[ \Map( BP,BG)_{B\rho} \right] \pcom \rto \dto& \Map( BP,BG \pcom)_{B\rho \circ \iota} \dto\\
\left[ B\Hom(\B P_N,\B G)_{\B \rho} \right] \pcom \rto  &  \left[ \Map( BP_N,BG)_{B\rho} \right] \pcom \rto& \Map( BP_N,BG \pcom )_{B\rho \circ \iota}
\enddiagram
\]
where the vertical maps are induced by the inclusion $ i \colon P_N \to P $ and the horizontal maps by
taking classifying spaces and by the completion maps $ \iota \colon BG \to BG \pcom$. We have already shown that 
the first and last vertical maps are homotopy equivalences. The composite of the lower horizontal 
maps is a homotopy equivalence by \cite[Theorem 3.2]{JMO3}. This proves the lemma.
\end{proof}

\begin{proposition}
\label{NaturalIsomorphism}
Let $\rho \colon S \to U(V) $ be a faithful $\Ff$-invariant representation. Assume the decomposition $\rho = \oplus n_i \mu_i $
in irreducible representations is such that the standard inclusion $U(n_i) \to U(M) $ induces an isomorphism in 
$\pi_{k-1}$ for all $M \geq n_i$ for each $i$. If $k > 0$ is even, then the functors $F_k^{\rho}$ and $R(-,\rho) \pcom $ are naturally isomorphic, and if $k$ is odd, then $F_k^{\rho}$ is the zero functor. 
\end{proposition}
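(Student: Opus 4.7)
The plan is to identify, for each $P\in\Ff^{cr}$, the mapping space whose homotopy groups compute $F_k^{\rho}(P)$ with the $p$-completed classifying space of a product of unitary groups, and then exploit the stable-range hypothesis to pin down the relevant homotopy groups. First, I would apply Lemma \ref{ClassifyingMap} with $G=U(V)$ and the homomorphism $\rho|_P\colon P\to U(V)$. Combined with the equivalence of topological categories $\B C_{U(V)}(\rho(P))\to \Hom(\B P,\B U(V))_{\B \rho|_P}$ from the discussion preceding the lemma, this yields a homotopy equivalence
\[ \Map(\widetilde BP, BU(V)\pcom)_{\widetilde B\rho|_P}\;\simeq\; BC_{U(V)}(\rho(P))\pcom, \]
using that $\rho$ is faithful, so $\rho|_P$ is an injective homomorphism into the connected compact Lie group $U(V)$.

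Second, I would compute the centralizer via Schur's lemma. Decomposing $\rho|_P=\bigoplus_{\sigma\in\Irr(P,\rho)} m_\sigma\sigma$ into isotypic components indexed by the irreducible subrepresentations of $\rho|_P$, one obtains
\[ C_{U(V)}(\rho(P))\;\cong\;\prod_{\sigma\in\Irr(P,\rho)} U(m_\sigma), \]
so that for $k\geq 2$,
\[ F_k^{\rho}(P)\;\cong\;\pi_{k-1}\bigl(C_{U(V)}(\rho(P))\bigr)\otimes_{\Z}\Z\pcom\;\cong\;\bigoplus_{\sigma\in\Irr(P,\rho)}\pi_{k-1}(U(m_\sigma))\otimes_{\Z}\Z\pcom. \]
The case $k=1$ is trivial since each $U(m_\sigma)$ is connected and odd matches the desired vanishing.

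Third, I would apply the stable-range hypothesis. For every $\sigma\in\Irr(P,\rho)$, the irreducible $\sigma$ occurs in $\mu_i|_P$ for some $i$, whence $m_\sigma\geq n_i$ (since $\rho$ contains $n_i$ copies of $\mu_i$). The hypothesis then provides isomorphisms $\pi_{k-1}(U(n_i))\xrightarrow{\cong}\pi_{k-1}(U(m_\sigma))$ induced by the standard inclusion; letting $M\to\infty$ in the hypothesis also gives $\pi_{k-1}(U(n_i))\cong\pi_{k-1}(U)$, which is $\Z$ for $k$ even and $0$ for $k$ odd. The odd case immediately forces $F_k^{\rho}\equiv 0$. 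For $k$ even, each summand becomes $\Z\pcom$ via the canonical Bott generator of $\pi_{k-1}(U(m_\sigma))$, and the total right-hand side is naturally identified with $R(P,\rho)\pcom=\bigoplus_{\Irr(P,\rho)}\Z\pcom$ on objects.

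The hard part will be verifying naturality of this identification with respect to morphisms $[f]\colon P\to Q$ in $\Or(\Ff^{cr})$. Precomposition with $\widetilde Bf$ induces $F_k^{\rho}(Q)\to F_k^{\rho}(P)$; on the centralizer side, since $\rho$ is $\Ff$-invariant we may pick $g\in U(V)$ conjugating $\rho|_Q\circ f$ to $\rho|_P$, producing a group homomorphism $C_{U(V)}(\rho(Q))\to C_{U(V)}(\rho(P))$ whose induced map on $\pi_{k-1}$ does not depend on the choice of $g$ (since inner automorphisms act trivially on homotopy groups, which accommodates the quotient by $\Inn(Q)$ in $\Or(\Ff)$). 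The remaining calculation is to trace how an irreducible $\tau$ of $Q$ decomposes along $f$ as $\tau\circ f=\bigoplus_\sigma n_{\tau,\sigma}\sigma$: the induced inclusion of centralizers embeds the Bott class of the $\tau$-block into a direct sum of Bott classes in the $\sigma$-blocks weighted by $n_{\tau,\sigma}$, which matches exactly the restriction map $R(Q,\rho)\pcom\to R(P,\rho)\pcom$ on the basis $[\tau]\mapsto\sum_\sigma n_{\tau,\sigma}[\sigma]$.
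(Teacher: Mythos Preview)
Your proposal is correct and follows essentially the same route as the paper: identify the mapping space with $BC_{U(V)}(\rho(P))\pcom$ via Lemma~\ref{ClassifyingMap}, compute the centralizer as a product of unitary groups through the isotypic decomposition, apply the stable-range hypothesis to reduce to $\pi_{k-1}(U)\otimes\Z\pcom$, and verify naturality by tracing how block-diagonal inclusions arise from the decomposition of irreducibles under restriction along $f$. The paper carries out exactly these four steps with only cosmetic differences (it phrases the induced map on centralizers via $\rho f\rho^{-1}$, invoking faithfulness explicitly, whereas your conjugation-by-$g$ formulation is equivalent).
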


\begin{proof}
Let $\B C_{U(V)}(\rho(P)) \to \Hom(\B P,\B U(V))_{\B \rho}$ be the equivalence of topological categories  
for each $P \leq S $, as we mentioned above. Taking a functor to its geometric realization induces a homotopy equivalence
\[ \left[ B\Hom( \B P , \B U(V) )_{\B \rho} \right] \pcom \to \Map( BP,BU(V)\pcom )_{B\rho}  \]
by Lemma \ref{ClassifyingMap}. The composition $BC_{U(V)}(\rho(P)) \pcom \to \Map( BP,BU(V)\pcom )_{B\rho}$ is
a homotopy equivalence, and we claim that it is natural as functors $\Or (\Ff^{cr}) \to \HoTop$. It is clear that
this composition is a natural transformation of functors $\Ff^{cr} \to \HoTop$, so we only need to check both functors descend
to $\Or (\Ff^{cr})$. Let $a$ be an element of $\Rep_{\Ff}(Q,P)$ represented by $f \colon Q \to P$. This map is determined
by $a$ up to conjugation by an element of $P$ so the map $Bf\colon BQ \to BP$ is determined up to homotopy. By precomposition
we get a map 
\[ \Map( BP,BU(V)\pcom )_{B\rho } \to \Map( BQ,BU(V)\pcom )_{B(f\rho)} = \Map( BQ, BU(V)\pcom )_{B\rho} \]
determined up to homotopy by $a$. On the other hand, $\rho f \rho^{-1} \colon \rho(Q) \to \rho(P)$ is given by conjugation 
by an element of $U(V)$ and determined by $a$ up to conjugation by an element of $\rho(P)$. The induced map $C_{U(V)}(\rho(P))
\to C_{U(V)}(\rho(Q))$ is uniquely determined, since conjugation by an element of $\rho(P)$ is the identity. Therefore
the map $BC_{U(V)}(\rho(P)) \pcom \to BC_{U(V)}(\rho(Q)) \pcom$ is determined by $a$.

By definition, the rigidification $\widetilde{B} \colon \Or (\Ff^{cr}) \to \Top $ is such that there is a natural
homotopy equivalence of functors from $B$ to $\ho \circ \widetilde{B} \colon \Or(\Ff^{cr}) \to \HoTop$. Therefore
there is a homotopy equivalence, natural as functors $\Or (\Ff^{cr}) \to \HoTop$
\[  \Map( \widetilde{B}P,BU(V) \pcom )_{\widetilde{B}\rho}  \to  \Map( BP,BU(V)\pcom )_{B\rho}   \]
In particular, for any $k \geq 1$, there is a natural isomorphism of functors $\Or (\Ff^{cr}) \to \Z_{(p)}-\Mod $
\[ \pi_k \left( BC_{U(V)}(\rho(P)) \pcom \right) \to \pi_k \left( \Map( \widetilde{B}P,BU(V) \pcom )_{\tilde{B}\rho} \right) \]

For each $P \leq S $, we decompose
\[ \rho _{|P} = \bigoplus _{\Irr(P,\rho)} b_i \alpha_i \]
into irreducible representations, and if $n$ is the minimum of the $n_i$, then clearly $b_i \geq n $. 
If $\alpha_i \colon P \to U(W_i)$, then the decomposition can be expressed by Proposition 2.3.15 of \cite{Z1} 
as the isomorphism induced by the evaluation map:
\[ \bigoplus_{W_i \in \Irr(P)} W_i \otimes \Hom_P(W_i,V) \to V , \]
and so $ C_{U(V)}(\rho(P)) \cong \hat{\prod} C_{U(b_i W_i)}(b_i\alpha_i(P)) \cong \hat{\prod} \Aut(\Hom_P(W_i,V)) \cong \hat{\prod} U(b_i)$, where $\hat{\prod}$ denotes the restricted product. Hence we have a natural equivalence	
\[ BC_{U(V)}(\rho(P)) \pcom \simeq \mathop{\prod}^{\wedge}_{\Irr(P,\rho)} BC_{U(b_i W_i)}(b_i \alpha _i(P)) \pcom . \]
We will now understand how maps between centralizers translate into maps of these unitary groups. We have an analogous decomposition 
\[ \rho_{|Q} = \bigoplus _{\Irr(Q,\rho)} d_i \beta_i \]
with $\beta_i \colon Q \to U(Z_i) $ and $d_i \geq n$. Given a map $f\colon Q \to P $, since $\rho$ is $\Ff$-invariant, we have 
$\rho_{|P} \circ f \cong \rho_{|Q}$. Via $f$, the representations $W_i$ decompose as a sum of $Z_k$ and 
so again have an isomorphism
\[ \bigoplus_{W_i \in \Irr(P)} \left( \bigoplus_{Z_k \in \Irr(Q)} Z_k \otimes \Hom_Q(Z_k,W_i) \right) \otimes \Hom_P(W_i,V) \to V \]
Note that the map $f$ induces a commutative diagram
\[
\diagram
C_{U(b_i W_i)}(b_i\alpha_i(P)) \rto & C_{U(V)}(\rho \circ f (Q)) \\
\Aut(\Hom_P(W_i,V)) \rto \uto & \Aut \left( \bigoplus \limits_{Z_k \in \Irr(Q)} \Hom_Q(Z_k,W_i) \otimes \Hom_P(W_i,V) \right) \uto
\enddiagram
\]
where the horizontal maps are induced by $f$ and the vertical maps are inclusions. It is clear that the component of
the lower horizontal that goes from $\Aut(\Hom_P(W_i,V))$ to $\Aut( \Hom_Q(Z_k,W_i) \otimes \Hom_P(W_i,V) )$ is given 
by the diagonal inclusion in blocks.

Let $k=2j$ and fix a generator $ \iota $ of $\pi_{2j-1}(U)$. Let $r$ be the smallest integer such that the standard inclusions
$U(r) \to U(s)$ induce an isomorphism on $\pi_{2j-1}$ for all $s>r$. For each $s \geq r $, we let $\iota_s$ be the
generator of $\pi_{2j-1}(U(s))$ which maps to $\iota $ under the standard inclusion $\pi_{2j-1}(U(s)) \to \pi_{2j-1}(U)$,
which is an isomorphism. Consider the isomorphisms:
\begin{align*}
F_{2j}^{\rho} = \pi_{2j}\left( \Map(\widetilde{B}P,BU(V) \pcom)_{\widetilde{B}\rho_{|P}} \right) & \cong \pi_{2j} \left( BC_{U(V)}(\rho(P) ) \pcom \right) \\
                                      & \cong \pi_{2j} \left( \mathop{\prod}^{\wedge}_{\Irr(P,\rho)} BC_{U(b_i W_i)}(b_i \alpha _i(P)) \pcom \right) \\
                                      & \cong \pi_{2j} \left( \mathop{\prod}^{\wedge}_{\Irr(P,\rho)} BU(b_i) \pcom \right) \\
                                                                & \cong \pi_{2j-1}\left( \mathop{\prod}^{\wedge}_{\Irr(P,\rho)} U(b_i) \pcom \right) \\
                                                                & \cong \bigoplus_{\Irr(P,\rho)} \Z \pcom \\
                                                                & \cong R(P,\rho) \otimes _{\Z} \Z \pcom
\end{align*}
where the fifth isomorphism follows since $b_i \geq n$ and $\pi_{2j-1}(U) \cong \Z $. We claim that this 
composition of isomorphisms is a natural isomorphism. We have already shown that the first two are natural 
isomorphisms, so we only need to show naturality for the composition of the last four isomorphisms. Indeed, 
a morphism $ f \colon Q \to P $ induces the diagonal inclusion of blocks $U(b_i) \to U(m_k b_i)$ for each 
decomposition of $W_i$ into $Z_k$. And this inclusion induces multiplication by $m_k$ on the homotopy groups. 
If we let $\mu_i$ be the image of $\iota_{b_i}$ in $R(P,\rho)$ under the composition of the last
two isomorphisms, then this is mapped to its decomposition into irreducible representations of $Q$ via $f \colon Q \to P $. 
If $k$ is odd, the same chain of isomorphisms gives the zero functor since then $\pi_{k-1}(U)=0$.
\end{proof}

\begin{theorem}
\label{AlmostSurjective}
Let $ \rho \colon S \to U(n) $ be a faithful $\Ff$-invariant representation such that the groups 
$ \! \! \! \! \! \! \higherlim{\Or (\Ff ^{cr})}{2i+1} R(-,\rho) \pcom $ are torsion for 
$ 3 \leq 2i+1 \leq l(\Or (\Ff ^{cr})) $. Then there is a positive integer $m$ such that 
$ m\rho \in \im (\Psi _{mn}) $.
\end{theorem}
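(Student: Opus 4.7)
The plan is to apply Wojtkowiak's obstruction theory to the partial map $f_1 \colon F_1\widetilde{B} \to BU(mn)\pcom$ defined on the $1$-skeleton of the filtration of $\hocolim{\Or(\Ff^{c})}{} \widetilde{B}$ by the coherent family $\{m\rho_{|P}\}_P$ arising from the fusion-invariance of $m\rho$, and to show that for $m$ sufficiently large this map extends over the entire homotopy colimit. By Proposition \ref{Rigidification}, such an extension yields a map $|\Ll|\pcom \to BU(mn)\pcom$ whose image under $\Psi_{mn}$ is $m\rho$. The obstruction analysis will be carried out over the smaller category $\Or(\Ff^{cr})$, which by Alperin's fusion theorem (Theorem \ref{Alperinfusionthm}) controls the relevant higher limits.

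Writing $\rho = \bigoplus n_i\mu_i$ in irreducibles, the representation $m\rho$ has the same irreducible summands with multiplicities $mn_i$, so $R(-,m\rho) = R(-,\rho)$. Since by Bott periodicity the stable range for $\pi_{k-1}(U)$ grows linearly in $k$, there is an integer $m_0$ so that for every $m \geq m_0$ the representation $m\rho$ satisfies the stability hypothesis of Proposition \ref{NaturalIsomorphism} in all degrees $k$ relevant to the analysis. For such $m$, Proposition \ref{NaturalIsomorphism} identifies $F_k^{m\rho}$ with the zero functor when $k$ is odd and with $R(-,\rho)\pcom$ when $k$ is even. Hence the obstruction to extending $f_k \colon F_k\widetilde{B} \to BU(mn)\pcom$ over $F_{k+1}\widetilde{B}$, which lives in $\higherlim{\Or(\Ff^{cr})}{k+1} F_k^{m\rho}$, vanishes automatically for $k$ odd (including $k=1$) and lies in $\higherlim{\Or(\Ff^{cr})}{k+1} R(-,\rho)\pcom$ for $k$ even. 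By Proposition \ref{BoundedLimits} together with Corollary 5.5 of \cite{BLO3}, this group vanishes whenever $k+1 > l(\Or(\Ff^{cr}))$, while for $3 \leq k+1 \leq l(\Or(\Ff^{cr}))$ it is torsion by hypothesis.

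Only finitely many stages can therefore carry a potentially nonzero obstruction, and each such obstruction is a class of finite order in an abelian group. I will clear them inductively using the naturality of Wojtkowiak's theory under postcomposition with the $t$-fold block-diagonal inclusion $U(mn) \hookrightarrow U(tmn)$, which corresponds at the level of representations to replacing $m\rho$ by $tm\rho$ and induces multiplication by $t$ on the stable group $\pi_{2j}(BU) \cong \Z$. Thus if the obstruction at stage $k$ has order $t$, then passing from $m\rho$ to $tm\rho$ multiplies that obstruction by $t$ and kills it, without disturbing the stability hypothesis of Proposition \ref{NaturalIsomorphism} (which only improves with larger multiplicities) or reintroducing any of the obstructions already cleared at earlier stages. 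Iterating through the finitely many relevant stages produces the required multiple $m$. The principal technical step is verifying the multiplicativity of the obstruction class under this diagonal stabilization, which requires unwinding Wojtkowiak's cocycle construction and tracking its behavior under postcomposition in the target; once that is in place, the rest of the argument is bookkeeping.
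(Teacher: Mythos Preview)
Your proposal is correct and follows essentially the same approach as the paper: stabilize $\rho$ so that Proposition~\ref{NaturalIsomorphism} applies, use it to identify the obstruction functors with zero in odd degrees and with $R(-,\rho)\pcom$ in even degrees, then inductively kill the finitely many torsion obstructions by postcomposing with block-diagonal inclusions (which act as multiplication on the stable homotopy groups by naturality of Wojtkowiak's theory), and finish using Proposition~\ref{BoundedLimits}. The only cosmetic difference is that the paper kills each obstruction by multiplying by the exponent of the obstruction group rather than the order of the individual class, but this changes nothing.
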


\begin{proof}
Let $ \rho _{|S} = n_1 \mu _1 \oplus \ldots \oplus n_s \mu _s $ 
be a decomposition as a direct sum of irreducible representations. Since the homotopy groups of 
unitary groups stabilize, we can choose a positive integer $m_0$ such that the standard inclusions induce isomorphisms 
$  \pi_j(BU(m_0 n_i)) \cong \pi_j(BU(N) )$ for all $ 1 \leq j \leq l(\Or (\Ff ^{cr})) $, for all $ i $ and for 
all $N \geq m_0 n_i $. The representation $m_0\rho$ satisfies the hypothesis of Proposition \ref{NaturalIsomorphism} 
and so $F_k^{m_0\rho}=0$ if $k$ is odd, and $F_k^{m_0\rho}$ is naturally isomorphic to $R(-,\rho) \pcom $ if $k$ is
even.

Assume that we have a map $f_k \colon F_k \tilde{B} \to BU(rn) \pcom $ for some $r \geq 1$ such that 
$f_k|_{\tilde{B}S} \simeq \tilde{B}r\rho$. By the obstruction theory of \cite{W}, the obstruction to 
extend the map to $f_{k+1} \colon F_k \tilde{B} \to BU(rn) \pcom$ lies in 
\[ \higherlim{\Or (\Ff ^{cr})}{k+1} F_k^{r\rho} . \]
If $k$ is odd, these groups are zero. Therefore we may assume that $k$ is even and in this case the 
obstruction group is isomorphic to
\[ \higherlim{\Or (\Ff ^{cr})}{k+1} R(-,\rho) \pcom  . \] 
By assumption, this group is torsion. Let $E$ be its exponent. In the stable range, the map 
$ BU(n) \to BU(n)^s \to BU(sn)$ given by composing the diagonal with the map induced by the 
diagonal inclusion $U(n)^s \to U(sn)$ induces multiplication by $s$ on the homotopy groups. 
Let us denote by $g \colon BU(rn) \pcom \to BU(Ern) \pcom $ the map induced on $p$-completions 
by this process. By postcomposition, we obtain a map $ g \circ f_k \colon F_k \tilde{B} \to BU(Ern) \pcom$ 
such that its restriction to $\tilde{B}S$ is homotopic to $\tilde{B}(Er\rho)$. By the naturality 
of the obstruction theory of \cite{W}, the morphism 
\[ g_* \colon \! \! \! \! \! \! \higherlim{\Or (\Ff ^{cr})}{k+1} R(-,\rho) \pcom \to \! \! \! \! \! \! \higherlim{\Or (\Ff ^{cr})}{k+1} R(-,\rho) \pcom \]
induced by postcomposing with $g$ maps the obstruction for extending $f_k$ to the one for extending 
$ g \circ f_k$. But this morphism is multiplication by $E$, and therefore trivial. Hence there exists 
an extension $f_{k+1} : F_{k+1} \tilde{B} \to BU(Ern) \pcom$ of $g \circ f_k$ up to homotopy which
does not change $g \circ f_k$ on $F_{k-1} \tilde{B}$.

Let $l$ be the length of the category $\Or (\Ff ^{cr})$. Repeating this process a finite number of times 
we obtain a map $f_l \colon F_l \tilde{B} \to BU(mn) \pcom$. Proposition \ref{BoundedLimits} then 
guarantees that $f_l$ extends to a map $f \colon |\Ll| \pcom \to BU(mn) \pcom$ satisfying that $f$ restricted 
to $\tilde{B}S$ is homotopy equivalent to $\tilde{B}(m\rho)$.
\end{proof}

\begin{remark}
\label{Uniqueness}
The question of uniqueness of a preimage of $\tau$ under the maps $\psi_n$
can be treated analogously. Therefore given two maps $f$, $g \colon |\Ll| \pcom \to BU(n) \pcom $ such
that $\psi_n(f) = \psi_n(g) = B\rho$, if the groups 
\[ \higherlim{\Or (\Ff ^{cr})}{2i} R(-,\rho) \pcom \]
are torsion for $ 3 \leq 2i+1 \leq l(\Or (\Ff ^{cr})) $, then there is an integer $ m $ such that 
$mf \simeq mg$. Given a map $h \colon |\Ll| \pcom \to BU(n) \pcom $, we are denoting by $mh$ the 
composition of $h$ with the map $BU(n) \pcom \to BU(mn) \pcom $ induced by the diagonal inclusion 
in blocks. 
\end{remark}

\begin{remark}
\label{SubrepresentationsExtend}
Given a $\Ff$-invariant representation $\rho$, observe that Theorem \ref{AlmostSurjective} only depends 
on the vanishing of obstructions coming from higher limits of the algebraic functor $R(-,\rho) \pcom $.
Note that if the groups in the statement are trivial, the conclusion can be strengthened to say that
there is a positive integer $m$ such that $ k \rho \in \im (\Psi _{kn}) $ if $k \geq m$.

Let $ \mu \colon S \to U(m)$ be an $\Ff$-invariant representation such that $\Irr(S,\mu)$ is a subset of 
$\Irr(S,\rho)$, where $\rho$ is an $\Ff$-invariant representation satisfying that 
\[ \higherlim{\Or (\Ff ^{cr})}{2i+1} R(-,\rho) \pcom = 0 \]
for $ 3 \leq 2i+1 \leq l(\Or (\Ff ^{cr})) $. By the same argument, there exists a positive integer $m$ 
such that $k \rho \in \im (\Psi _{kn}) $ for $k \geq m$, and $ k \rho \oplus \mu$ also extends to a map 
from $|\Ll| \pcom$ for $ k \geq m$. The same observations holds for Remark \ref{Uniqueness}. 
\end{remark}

Now Theorem \ref{AlmostSurjective} and Proposition \ref{ExistenceEmbeddings} imply:

\begin{theorem}
\label{FaithfulMonomorphism}
A $p$-local compact group $(S,\Ff,\Ll)$ with a faithful $\Ff$-invariant representation
that satisfies the hypothesis of Theorem \ref{AlmostSurjective} has a unitary
embedding.
\end{theorem}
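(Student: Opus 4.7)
The plan is essentially to combine the two main results immediately preceding the statement. Let $\rho\colon S\to U(n)$ be the faithful $\Ff$-invariant representation satisfying the hypothesis of Theorem \ref{AlmostSurjective}. First I would invoke Theorem \ref{AlmostSurjective} to produce a positive integer $m$ such that $m\rho$ lies in the image of $\Psi_{mn}$, i.e.\ there exists a map $f\colon|\Ll|\pcom\to BU(mn)\pcom$ whose restriction to $\widetilde{B}S$ represents $\widetilde{B}(m\rho)$.

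Next I would verify the only subtlety, namely that the enlarged representation $m\rho$ remains faithful. This is immediate because $m\rho$ is the composition of $\rho$ with the diagonal block inclusion $U(n)\hookrightarrow U(mn)$, which is itself an injective homomorphism; hence $\ker(m\rho)=\ker(\rho)=\{1\}$. In particular $m\rho$ is a faithful $\Ff$-invariant representation lying in $\im(\Psi_{mn})$.

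Finally I would feed this into Proposition \ref{ExistenceEmbeddings}, applied with $m\rho$ in the role of the representation and $mn$ in the role of the dimension. The proposition then directly yields a unitary embedding $|\Ll|\pcom\to BU(mn)\pcom$ of the $p$-local compact group $(S,\Ff,\Ll)$, completing the argument. There is no genuine obstacle here: the theorem is essentially a formal corollary packaging the representation-theoretic input of Theorem \ref{FaithfulFusion}, the algebraic obstruction-vanishing statement of Theorem \ref{AlmostSurjective}, and the homotopy-theoretic criterion of Proposition \ref{ExistenceEmbeddings} (which itself rests on Theorem \ref{HomotopyMonomorphism} and Lemma \ref{PropertiesMono}(f)).
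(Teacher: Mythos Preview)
Your proposal is correct and matches the paper's own argument exactly: the paper simply states that the theorem follows from Theorem \ref{AlmostSurjective} and Proposition \ref{ExistenceEmbeddings}, and your write-up just makes explicit the one small check (that $m\rho$ is still faithful) needed to link them.
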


Every $p$-local compact group $(S,\Ff,\Ll)$ has a faithful $\Ff$-invariant unitary representation by 
Theorem \ref{FaithfulFusion} and so the following corollary follows:

\begin{corollary} 
\label{LowDepth}
Let $(S,\Ff,\Ll)$ be a $p$-local compact group with $l(\Or (\Ff ^{cr})) < 3 $. Then
it has a unitary embedding.
\end{corollary}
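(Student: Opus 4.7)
The plan is to combine Theorem \ref{FaithfulFusion} with Theorem \ref{FaithfulMonomorphism}, observing that under the length hypothesis the algebraic obstructions of Theorem \ref{AlmostSurjective} are vacuously trivial.

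First, I invoke Theorem \ref{FaithfulFusion} to obtain a faithful $\Ff$-invariant unitary representation $\rho \colon S \to U(n)$. The goal is then to verify that $\rho$ satisfies the hypothesis of Theorem \ref{AlmostSurjective}, namely that the higher limits
\[ \higherlim{\Or(\Ff^{cr})}{2i+1} R(-,\rho)\pcom \]
are torsion for every $i$ with $3 \leq 2i+1 \leq l(\Or(\Ff^{cr}))$.

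The key observation is that the assumption $l(\Or(\Ff^{cr})) < 3$ means there is no integer $i \geq 1$ satisfying $3 \leq 2i+1 \leq l(\Or(\Ff^{cr}))$, since the inequality $2i+1 \geq 3$ combined with $2i+1 \leq 2$ has no solution. Hence the condition on the higher limits holds vacuously, and Theorem \ref{AlmostSurjective} applies directly to $\rho$.

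Finally, Theorem \ref{FaithfulMonomorphism} (which packages Theorem \ref{AlmostSurjective} together with Proposition \ref{ExistenceEmbeddings}) yields a unitary embedding of $(S,\Ff,\Ll)$. There is essentially no obstacle here: all the work has already been done in building up the obstruction-theoretic machinery, and the only task is to recognize that the bounded length collapses the range of potentially nonvanishing obstructions to the empty set.
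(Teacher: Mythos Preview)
Your proposal is correct and follows essentially the same approach as the paper: the paper simply notes that every $p$-local compact group has a faithful $\Ff$-invariant unitary representation by Theorem~\ref{FaithfulFusion}, and then the corollary follows from Theorem~\ref{FaithfulMonomorphism} since the torsion hypotheses of Theorem~\ref{AlmostSurjective} are vacuous when $l(\Or(\Ff^{cr}))<3$. Your explicit observation that the index range $3\leq 2i+1\leq l(\Or(\Ff^{cr}))$ is empty is precisely the point.
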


%%%%%%%%%%%%%%%%%%%%%%%%%%%%%%%%%%%%%%%%%%%%%%%%%%%%%%%%%%%%%%%%%%
\section{Unitary embeddings of $p$-compact groups}
\label{pcompact}
%%%%%%%%%%%%%%%%%%%%%%%%%%%%%%%%%%%%%%%%%%%%%%%%%%%%%%%%%%%%%%%%%%

In this section we apply the results of Section \ref{Embeddings} to show the existence of unitary embeddings of 
$p$-compact groups whose orbit categories of centric radical subgroups have small length, namely
for the Clark-Ewing and the Aguad\'e-Zabrodsky spaces. The existence of unitary embeddings of connected 
$p$-compact groups, also known as the Peter-Weyl theorem for
connected $p$-compact groups (\cite{AGMV}, Theorem 1.6) was proved by using the classification of
$p$-compact groups \cite{AGMV}, \cite{AG}, and showing the existence of such embeddings for the
irreducible exotic $p$-compact groups. This was done for the generalized Grassmannians in \cite{C1}, for
the Aguad\'e-Zabrodsky and Clark-Ewing spaces in \cite{C2} and for $DI(4)$ in \cite{Z1}, \cite{Z15}, \cite{Z2}. The existence for 
the Aguad\'e-Zabrodsky and Clark-Ewing spaces has not been published and we fill this gap in the literature here.   

Recall that a $p$-compact group is a triple $(X,BX,e)$ where $X$ is a space, $BX$ is a $p$-complete
connected pointed space, $H^*(X;\F_p)$ is finite, and $ e \colon X \to \Omega BX $ is a homotopy
equivalence. When there is no danger of confusion we will use $X$ to denote $(X,BX,e)$.

The $p$-completion $ \hat{T} = \Omega (BT \pcom) $  of $ T = (S^1)^r $ is called a $p$-compact torus
of rank $r$. A homomorphism $ f \colon X \to Y $ of $p$-compact groups is a pointed map $ Bf \colon BX \to BY $.
A maximal torus of a $p$-compact group $X$ is a monomorphism $ T \to X $ from a $p$-compact torus of
maximal dimension into $X$. The Weyl group of $X$ is the group of homotopy classes of maps 
$ BT \to BT $ that commute up to homotopy with the inclusion $ BT \to BX $.  

For more information on the foundations of $p$-compact groups from the point of view of homotopy theory, see \cite{DW}. 
We now give some of the details from Section 10 of \cite{BLO3}, where it is shown that every $p$-compact group is 
modeled by a $p$-local compact group.

Given a discrete $p$-toral group $P$, consider the $p$-compact toral group $\hat{P} = \Omega (BP \pcom )$. Let $X$ be a $p$-compact group. A 
discrete $p$-toral subgroup of $X$ is a pair $(P,u)$, where $P$ is a discrete $p$-toral group and $ u\colon\hat{P} \to X $ is a homotopy monomorphism.
Then $X$ has a maximal discrete $p$-toral subgroup $ (S,f)$ and given any discrete $p$-toral subgroup $(P,u)$, it is maximal if and only if
$ p $ does not divide $ \chi ( X / u(\hat{P})) $. The Euler characteristic is taken with respect to homology with coefficients in $\F _p$.

Given such a maximal subgroup $(S,f)$, the fusion system $ \Ff _{S,f}(X) $ is the category with discrete $p$-toral subgroups of $X$ as objects
and morphisms 
\[ \Hom _{\Ff _{S,f}(X)} (P,Q) = \{ \phi \in \Hom (P,Q) \mid Bf _{|BQ} \circ B\phi \simeq Bf _{|BP} \} . \] 
This is a saturated fusion system over $S$ and the centric subgroups correspond to the centric subgroups as defined in 
\cite{CLN}. There is a unique centric linking $ \Ll _{S,f} ^c(X) $ associated to $ \Ff _{S,f}(X) $ such that 
$ | \Ll _{S,f} ^c(X) | \pcom \simeq BX $. If $T$ is the connected component of $S$, the maximal torus of $X$
is $\Omega (BT \pcom)$ and so the Weyl group of $X$ is isomorphic to $ \Aut _{\Ff}(T)$. Since the torus 
of a connected $p$-compact group is self-centralizing, we have $C_S(T) = T $ and in particular $S/T$ is the 
$p$-Sylow subgroup of $ \Aut _{\Ff}(T)$.

We now focus on the families of Clark-Ewing and Aguad\'e-Zabrodsky spaces.

\subsection{Clark-Ewing spaces}

A Clark-Ewing space is a $p$-compact group $X$ such that the order of the Weyl group is prime to $p$. Let 
$\hat{T}$ be a maximal torus of $X$ and $W \leq GL(\pi_1(\hat{T})) $ the Weyl group endowed with a faithful
$p$-adic representation. $W$ acts on $B\hat{T}$ via the above representation. The Clark-Ewing spaces are
homotopy equivalent to the Borel construction $ X = (B\hat{T} \times _W EW) \pcom $.

Since the Weyl group has order prime to $p$, a discrete approximation $ T $ to $\hat{T}$ is a maximal discrete $p$-toral
subgroup of $X$. Since $ T $ is abelian, the only $\Ff$-centric radical subgroup of $T$ is itself
and so the depth of $\Or (\Ff ^{cr}) $ is zero. In this case the fusion system is determined by $\Aut_{\Ff}(T)$, which
equals the Weyl group $W$ of $X$.  By Corollary \ref{LowDepth}, there is a unitary embedding 
of $ BX $. Note that this argument holds for any $p$-local compact group $(S,\Ff,\Ll)$ with
$S$ abelian.

\subsection{Aguad\'e-Zabrodsky spaces}

Let $G_i$ be one of the groups $G_{12}$, $G_{29}$, $G_{31}$, $G_{34}$ from \cite{A}. In this article it is shown that 
these groups have a subgroup isomorphic to $\Sigma _{p_i}$, where $p_{12}=3$, $p_{29} = p_{31} =5$ and $p_{34}=7$. 
Let $\I _i$ be the category with two objects $0$ and $1$ and morphisms $\Hom_{\I _i}(0,0) = G_i$, 
$\Hom_{\I _i}(1,0)= G_i/ \Sigma _{p_i} $, $\Hom_{\I _i}(1,1) = Z(G_i) $ and $\Hom_{\I _i}(0,1) = \emptyset $. 
Consider the functors $ F'_i \colon \I _i \to \HoTop $ defined by $ F'_i(0) = BT^{p_i -1} $, $F'_i(1) = BSU(p_i) $,  
and by $Z(G_i)$ acting via unstable Adams' operations. These functors lift to a functor $ F_i \colon \I _i \to \Top $ 
and the Aguad\'e-Zabrodsky $p$-compact groups $X_{12}$, $X_{29}$, $X_{31}$ and $X_{34}$ are given by:
\[ BX_i = \left( \hocolim{\I _i}{} F_i \right) \pcom . \] 
Note that $X_{12}$ and $X_{31}$ were first described in \cite{Za} using other techniques.

The Weyl group of $ X_i $ is $ G_i $, which has $\Z / p_i $ as its $p_i$-Sylow subgroup. 
Therefore a maximal discrete $p$-toral subgroup of $X_i$ is the semidirect product $S_i$ 
of $ T_i = (\Z /p_i^{\infty})^{p_i-1} $ and $ \Z / p_i $, where $\Z /p_i $ acts on 
$(\Z /p_i^{\infty})^{p_i-1}$ via the inclusion $ \Z /p_i \to \Sigma _{p_i} $ that 
sends $1$ to the $p_i$-cycle $(1 \ldots p_i)$. Let $\sigma$ be a generator of $\Z /p_i$. 
Let us denote by $\Ff_i$ and $\Ll_i$ the fusion system and centric linking systems corresponding to $X_i$. 

The subgroup $S_i$ is $\Ff_i$-centric radical and since $T_i$ is self-centralized, it is $\Ff_i$-centric. 
Now $\Out_{\Ff_i}(T_i)= \Aut_{\Ff_i}(T_i) = G_i $ has a $p_i$-Sylow subgroup of rank $1$ which is not normal 
in $G_i$ and so $T_i$ is $\Ff_i$-centric radical. The center of $S_i$ is the subgroup generated by 
$ (\xi _i, \ldots, \xi_i) \in T $ where $ \xi _i $ is a $p_i$th root of unity.

In order to apply the results of Section \ref{Embeddings}, we need to find the centric radical subgroups of the Aguad\'e-Zabrodsky
spaces. We do so by showing that they coincide with the centric radical subgroups of the centralizer
fusion system of $Z(S_i)$ and identifying this centralizer with the fusion system of $SU(p_i)$, for which 
they are known.

\begin{lemma}
\label{CentricInCentralizer}
Let $\Ff$ be a saturated fusion system over a discrete $p$-toral group $S$, and $Q$
a fully centralized subgroup of $S$. Then a subgroup $ P \leq C_S(Q)$ is $C_{\Ff}(Q)$-centric
if and only if $ P \geq Z(Q)$ and $PQ$ is $\Ff$-centric.
\end{lemma}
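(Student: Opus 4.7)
The plan is to use the identity $C_{C_S(Q)}(P) = C_S(PQ)$, which holds whenever $P \leq C_S(Q)$, as a bridge between the two centricity conditions, together with the extension axiom (II) from saturation, which is applicable because $Q$ is fully centralized.

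I would handle the ``if'' direction first. Assuming $Z(Q) \leq P$ and that $PQ$ is $\Ff$-centric, any $C_{\Ff}(Q)$-conjugate $P'$ of $P$ comes by definition with an $\Ff$-isomorphism $\varphi \colon P \to P'$ that extends to $\bar\varphi \colon PQ \to P'Q$ with $\bar\varphi|_Q = \id_Q$. Thus $P'Q$ is $\Ff$-conjugate to $PQ$ and hence $\Ff$-centric. For $x \in C_{C_S(Q)}(P') = C_S(P'Q) \leq P'Q$, write $x = p'q$ with $p' \in P'$, $q \in Q$; then $q = (p')^{-1}x$ lies in $C_S(Q) \cap Q = Z(Q)$. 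Since $\bar\varphi$ restricts to the identity on $Z(Q) \leq P$, one has $Z(Q) = \bar\varphi(Z(Q)) \leq \bar\varphi(P) = P'$, and so $x \in P'$, as required.

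For the ``only if'' direction, $Z(Q) \leq C_{C_S(Q)}(P) \leq P$ is immediate from $P$ being $C_{\Ff}(Q)$-centric. To show $PQ$ is $\Ff$-centric it suffices to verify $C_S(R) \leq R$ for every $\Ff$-conjugate $R$ of $PQ$. Fix an isomorphism $\phi \colon PQ \to R$ and set $Q_R = \phi(Q)$. Since $Q$ is fully centralized, saturation axiom (II) applied to $\phi^{-1}|_{Q_R}$ produces an extension $\tilde\phi$ defined on $N_{\phi^{-1}|_{Q_R}}$. One checks directly that $N_{\phi^{-1}|_{Q_R}}$ contains both $R$ (because conjugation by $r \in R$ transports via $\phi^{-1}$ to an inner automorphism of $Q$ coming from $PQ \leq S$) and $C_S(R) \leq C_S(Q_R)$. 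The pivotal step will be to identify $\tilde\phi(R)$: the two extensions $\tilde\phi|_R$ and $\phi^{-1}|_R$ of $\phi^{-1}|_{Q_R}$ must differ by postcomposition with conjugation by some $s \in C_S(Q)$, so $\tilde\phi(R) = (sPs^{-1})Q$. Using the already-established inclusion $C_S(PQ) = C_{C_S(Q)}(P) \leq P$, one then obtains
\[
\tilde\phi(C_S(R)) \leq C_S(\tilde\phi(R)) = s\,C_S(PQ)\,s^{-1} \leq sPs^{-1} \leq \tilde\phi(R),
\]
and injectivity of $\tilde\phi$ yields $C_S(R) \leq R$.

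The main obstacle I foresee is the uniqueness-up-to-conjugacy lemma used in the converse direction, namely that any two $\Ff$-morphisms $R \to S$ extending a common $\phi^{-1}|_{Q_R}$ differ by postcomposition with conjugation by an element of $C_S(Q)$. This is standard for fusion systems over finite $p$-groups, and one just has to verify that the relevant extension and uniqueness arguments carry over to the discrete $p$-toral setting via the saturation axioms as formulated in \cite{BLO3}. Everything else reduces to routine manipulations with $P$, $Q$, their product and their centralizers.
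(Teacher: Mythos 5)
Your overall architecture is sound, and it essentially reconstructs the argument that the paper itself only cites (its proof of this lemma is a one-line pointer to Proposition 2.5(a) of \cite{BLO2}). The ``if'' direction is correct as written, and in the ``only if'' direction the reduction via axiom (II) applied to $\phi^{-1}|_{Q_R}$ — including the verification that $N_{\phi^{-1}|_{Q_R}}$ contains both $R$ and $C_S(R)$ — is also correct.

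However, the step you flag as the main obstacle is genuinely false, not merely unverified in the discrete $p$-toral setting. Two $\Ff$-morphisms defined on $R$ that extend the same isomorphism $\phi^{-1}|_{Q_R}\colon Q_R \to Q$ need not differ by postcomposition with conjugation by an element of $C_S(Q)$: already for $\Ff=\Ff_S(G)$ the ``difference'' of two such extensions is conjugation by an element of $C_G(Q)$, which need not lie in $S$ (and for $Q=1$ your claim would assert that any two $\Ff$-morphisms out of $R$ are $S$-conjugate). What is true, and is all you need, is weaker: $\tilde\phi\circ\phi\colon PQ\to \tilde\phi(R)$ is a morphism in $\Ff$ restricting to the identity on $Q$, hence by the very definition of the centralizer fusion system its restriction to $P$ is a morphism in $C_{\Ff}(Q)$. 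Therefore $\tilde\phi(R)=P'Q$ with $P'=\tilde\phi(\phi(P))$ a $C_{\Ff}(Q)$-conjugate of $P$, not necessarily of the form $sPs^{-1}$. Your closing computation survives this correction, but the inclusion $C_S(PQ)\le P$ must be replaced by $C_S(P'Q)=C_{C_S(Q)}(P')\le P'$, and this is precisely where you need the full strength of the hypothesis that $P$ is $C_{\Ff}(Q)$-centric (every $C_{\Ff}(Q)$-conjugate of $P$ contains its $C_S(Q)$-centralizer), rather than only the single inclusion $C_{C_S(Q)}(P)\le P$ that your version invokes. With that substitution the argument closes.
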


\begin{proof}
The proof of the analogous result for fusion systems over finite $p$-groups, that is, 
Proposition 2.5 (a) in \cite{BLO2}, is still valid in this context.
\end{proof}

Recall that if $X$ is a $p$-compact group and $(E,i)$ is a discrete $p$-toral subgroup of $X$,
then $ \Map(BE,BX)_i$ is the classifying space of a $p$-compact group $C_X(E)$ (see Proposition 5.1, 
Theorem 6.1 and Proposition 6.8 in \cite{DW}). If $E$ is a fully centralized subgroup of a $p$-local 
compact group $(S,\Ff,\Ll)$, it is not known in general whether $ |C_{\Ll}(E)| \pcom \simeq 
\Map(BE,|\Ll| \pcom)_{Bi} $, but next proposition shows a particular case when this holds 
at the level of fusion systems.

\begin{proposition}
\label{Centralizers}
Let $X$ be a $p$-compact group, $S$ a maximal discrete $p$-toral subgroup and $\Ff$ the associated fusion
system over $S$. Let $E$ be a fully centralized subgroup of $Z(S)$. Then the fusion system $C_{\Ff}(E)$ 
coincides with the fusion system $\G$ of the $p$-compact group $C_X(E)$ over $C_S(E)$.
\end{proposition}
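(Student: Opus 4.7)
The plan is to establish equality of the morphism sets $\Hom_{C_{\Ff}(E)}(P,Q) = \Hom_{\G}(P,Q)$ for all $P,Q \leq C_S(E)$. Since $E \leq Z(S)$, we have $C_S(E) = S$, and the inclusion $BS \to BX$ factors through $BC_X(E)$ because elements of $S$ centralize $E$. First I would verify that $S$ is a maximal discrete $p$-toral subgroup of $C_X(E)$: this follows from $E$ being fully centralized in $\Ff$ together with standard results characterizing maximal discrete $p$-toral subgroups of centralizer $p$-compact groups via Euler characteristic coprime to $p$ (Section 10 of \cite{BLO3}). After this, $\G$ is a well-defined fusion system over $S$.

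For the forward inclusion $\Hom_{C_{\Ff}(E)}(P,Q) \subseteq \Hom_{\G}(P,Q)$, a morphism $\phi$ extends by definition to some $\bar{\phi} \in \Hom_{\Ff}(PE,QE)$ with $\bar{\phi}|_E = \id_E$. This yields a homotopy $Bi_X|_{PE} \simeq Bi_X|_{QE} \circ B\bar{\phi}$ of maps $B(PE) \to BX$. Composing with the multiplication map $\mu \colon BP \times BE \to B(PE)$ and applying the exponential adjunction $\Map(BP \times BE, BX) \simeq \Map(BP, \Map(BE, BX))$ would produce a homotopy between the two relevant maps $BP \to BC_X(E) \simeq \Map(BE,BX)_{Bi}$, placing $\phi$ in $\Hom_{\G}(P,Q)$.

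For the reverse inclusion, given $\phi \in \Hom_{\G}(P,Q)$, I would first prove that $\phi|_{P \cap E} = \id$. The key input is that $E \leq Z(C_X(E))$ as $p$-compact groups (a standard property of centralizers), which forces $E$ into the center of the fusion system $\G$ so that every $\G$-morphism fixes $E$ pointwise. Applying the same reasoning to $\phi^{-1}$ on $\phi(P)$ yields $\phi(P) \cap E = P \cap E$. These facts let us define an injective group homomorphism $\bar{\phi} \colon PE \to QE$ by $\bar{\phi}(pe) = \phi(p)e$, which extends $\phi$ and is the identity on $E$. To verify $\bar{\phi} \in \Hom_{\Ff}(PE, QE)$, I would run the adjunction argument in reverse: the homotopy of the two maps $BP \to BC_X(E)$ adjoins to a homotopy of maps $BP \times BE \to BX$ between the factorizations $Bi_X|_{PE} \circ \mu$ and $Bi_X|_{QE} \circ B\bar{\phi} \circ \mu$, and one then descends this to a homotopy of the underlying maps $B(PE) \to BX$.

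The hard part will be this last descent step when $P \cap E \neq 1$: the multiplication $\mu$ is then a fibration with non-trivial fiber $B(P \cap E)$, and one must check that the precomposition $[B(PE), BX] \to [BP \times BE, BX]$ interacts correctly with the adjoint identification with $[BP, BC_X(E)]$, so that a homotopy upstairs genuinely produces a homotopy downstairs. This comes down to a compatibility statement for the mapping space description $BC_X(E) \simeq \Map(BE, BX)_{Bi}$ applied to maps factoring through $B(PE)$, and can be handled by standard techniques for mapping spaces into classifying spaces of $p$-compact groups.
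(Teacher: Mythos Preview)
Your argument is essentially correct but takes a different route from the paper. The paper does not attempt to compare $\Hom$-sets for arbitrary $P,Q\leq S$. Instead it first shows that $C_{\Ff}(E)$ and $\G$ have the same \emph{centric} subgroups (using Lemma~\ref{CentricInCentralizer} and the mapping-space characterisation $\Map(BP,BX)_j\simeq BZ(P)$ of centricity), then checks that $\Aut_{C_{\Ff}(E)}(P)=\Aut_{\G}(P)$ for each such centric $P$ via the same adjunction trick you use, and finally invokes Alperin's fusion theorem (Theorem~\ref{Alperinfusionthm}) to conclude that the two saturated fusion systems agree. The payoff of this reduction is that every centric $P$ automatically contains $E$, so $PE=P$, the multiplication map $\mu$ is the identity, and the ``descent'' problem you single out as the hard part simply never arises.

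Your direct approach does work, but the descent step deserves a cleaner justification than ``standard techniques''. Since $\mu\colon P\times E\to PE$ is a surjection of discrete $p$-toral groups, the induced map $B\mu^*\colon [B(PE),BX]\to [B(P\times E),BX]$ is injective: by \cite[Theorem~6.3(a)]{BLO3} both sets are identified with $\Hom(-,S)$ modulo $\Ff$-conjugacy, and two homomorphisms out of $PE$ which become $\Ff$-conjugate after precomposition with a surjection were already $\Ff$-conjugate. This disposes of the issue. Conversely, the paper's use of Alperin's theorem costs the extra step of matching up centric objects in the two fusion systems, which you avoid entirely. Either way the adjunction argument is the heart of the matter.
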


\begin{proof}
Note that both fusion systems are defined over the same Sylow $C_S(E)$. We first show that they have the
same centric subgroups. Let $P$ be centric in $C_{\Ff}(E)$, then by Lemma \ref{CentricInCentralizer} we have 
that $ P \geq E $ and $P$ is $\Ff$-centric. Since $P$ is $\Ff$-centric, there is a homotopy equivalence 
$\Map (BP,BX)_j \simeq BZ(P)$, where $j \colon BP \to BX $ is the standard inclusion. And then
\begin{align*}
\Map(BP, \Map(BE,BX)_i ) _k & \simeq \Map(BE, \Map(BP,BX)_j ) _l \\
                            & \simeq \Map(BE,BZ(P))_m \\
                            & \simeq BZ(P) 
\end{align*}
where $i$, $k$, $l$ and $m$ are the inclusions. Therefore $P$ is centric in $\G$. Conversely, if $P$
is centric in $\G$, then it contains $Z(C_S(E))$ and so it contains $E$. There is a homotopy equivalence
$ \Map(BP, \Map(BE,BX)_i ) _k \simeq BZ(P)$. Now note that
\[ \Map(BP, \Map(BE,BX)_i ) _k \simeq BC_{C_X(E)}(P) \]
and $ E \leq P $ gives us a map $ BC_X(P) \to BC_X(E) $, which in turn induces $ BC_X(P) \simeq 
BC_{C_X(P)}(P) \to BC_{C_X(E)}(P) $. The inclusion $BC_X(E) \to BX$ gives rise to a map $ BC_{C_X(E)}(P) \to BC_X(P)$,
which is the homotopy inverse. Therefore
\[ BZ(P) \simeq \Map(BP, \Map(BE,BX)_i ) _k \simeq BC_{C_X(E)}(P) \simeq BC_X(P) \]
and so $P$ is centric in $X$.

Now we show that the centric subgroups in $C_{\Ff}(E)$ and $\G$ have the same automorphism groups. 
Let $f$ be a $C_{\Ff}(E)$-automorphism of a centric subgroup $P$. Since $P$ contains $E$, this is an
$\Ff$-automorphism of $P$ which restricts to the identity on $E$. And $E$ is central in $P$, so we
have a diagram which is commutative up to homotopy:
\[
\diagram
BP \times BE \rto \dto_{Bf \times \id} & BX \\
BP \times BE \urto  &
\enddiagram
\]
Taking adjoints, we get another diagram, commutative up to homotopy:
\[
\diagram
BP \rto \dto_{Bf} & \Map (BE,BX)_i \\
BP \urto  &
\enddiagram
\]
which means that $f$ is a $\G$-automorphism of $P$. Conversely, a $\G$-automorphism $g$ of $P$
defines a homotopy commutative diagram as above whose adjoint shows that $g$ is a $C_{\Ff}(E)$-automorphism
of $P$. By Alperin's fusion theorem, morphisms in saturated fusion systems are generated by automorphisms 
of centric subgroups, thus $\G = C_{\Ff}(E)$.
\end{proof} 

The following lemma is a more general version of Lemma 3.8 in \cite{CLN} for $p$-local compact groups.

\begin{lemma}
\label{CentricCentralizer}
Let $\Ff$ be a saturated fusion system over a discrete $p$-toral group $S$, and $P \leq C_S(E)$, where
$E$ be a fully centralized abelian subgroup of $S$. Then $P$ is centric in $ C_{\Ff}(E)$ if 
and only if it is centric in $\Ff$.
\end{lemma}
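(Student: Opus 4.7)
The plan is to derive this as a direct corollary of Lemma \ref{CentricInCentralizer} (the criterion that $P \leq C_S(Q)$ is $C_{\Ff}(Q)$-centric iff $P \geq Z(Q)$ and $PQ$ is $\Ff$-centric), exploiting the extra hypothesis that $E$ is abelian so that $Z(E) = E$.

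For the forward implication, suppose $P$ is centric in $C_{\Ff}(E)$. By Lemma \ref{CentricInCentralizer} applied to $Q = E$, we have $P \geq Z(E) = E$ and $PE$ is $\Ff$-centric. But $E \leq P$ forces $PE = P$, and hence $P$ itself is $\Ff$-centric.

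For the reverse implication, the key observation is that $P \leq C_S(E)$ means every element of $E$ commutes with every element of $P$, so $E \leq C_S(P)$. If $P$ is $\Ff$-centric then in particular $C_S(P) \leq P$, so $E \leq P$. Consequently $PE = P$ is $\Ff$-centric, and the condition $P \geq Z(E) = E$ is satisfied since $E$ is abelian. Thus Lemma \ref{CentricInCentralizer} gives that $P$ is $C_{\Ff}(E)$-centric.

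The main step that requires care is verifying that Lemma \ref{CentricInCentralizer} is applicable, which needs $E$ to be fully centralized in $\Ff$; this is given by hypothesis. No other obstacles are anticipated; the argument is formally identical to the finite case (Lemma 3.8 of \cite{CLN}), and the passage to discrete $p$-toral groups is automatic because both the statement of Lemma \ref{CentricInCentralizer} and the definition of centric subgroups are formulated uniformly in that setting.
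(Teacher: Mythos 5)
Your proof is correct and follows essentially the same route as the paper: the forward direction is word-for-word the paper's argument via Lemma \ref{CentricInCentralizer} with $Z(E)=E$. For the reverse direction the paper simply notes it is immediate from the definition (any $C_{\Ff}(E)$-conjugate $P'$ of $P$ is an $\Ff$-conjugate, so $C_{C_S(E)}(P') \leq C_S(P') \leq P'$), whereas you route it back through Lemma \ref{CentricInCentralizer}; your version is also valid since $E \leq C_S(P) \leq P$ forces $PE = P$.
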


\begin{proof}
It is clear that centric subgroups in $\Ff$ are centric in $C_{\Ff}(E)$. Let $P$ be centric in $C_{\Ff}(E)$.
By Lemma \ref{CentricInCentralizer}, we have $ P \geq E $ and $ PE = P $ is $\Ff$-centric.
\end{proof}

\begin{lemma}
\label{RadicalCentralizer}
Let $\Ff_i$ be the fusion system over $S_i$ associated to the Aguad\'e-Zabrodsky space $X_i$. Let $Q \neq T_i$ be an $\Ff_i$-centric
subgroup of $S_i$. Then $Q$ is radical in $ C_{\Ff_i}(Z(S_i))$ if and only if it is radical in $\Ff_i$.
\end{lemma}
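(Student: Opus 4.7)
The plan is to identify $\Out_{C_{\Ff_i}(Z(S_i))}(Q)$ as a normal subgroup of $\Out_{\Ff_i}(Q)$ whose quotient has order coprime to $p_i$. A standard group-theoretic fact will then finish the job: if $H \trianglelefteq G$ with $[G:H]$ prime to $p_i$, then $O_{p_i}(G) = O_{p_i}(H)$, since $O_{p_i}(G)$ must embed into the $p_i$-free quotient $G/H$ (forcing $O_{p_i}(G) \leq H$), and $O_{p_i}(H)$ is characteristic in $H$ hence normal in $G$.

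First I would unpack the structure of $Q$. Since $Q$ is $\Ff_i$-centric, $Z(S_i) \leq C_{S_i}(Q) \leq Q$. The hypotheses $Q \neq T_i$ and centricity force $Q \not\leq T_i$: otherwise $C_{S_i}(Q) \supseteq T_i \supsetneq Q$. Hence $Q$ surjects onto $S_i/T_i \cong \Z/p_i$, and $Q = A \rtimes \langle \sigma_Q \rangle$ with $A := Q \cap T_i$ and $\sigma_Q = (t_0,\sigma)$ lifting a generator. The key point is then to show every $\phi \in \Aut_{\Ff_i}(Q)$ preserves $Z(S_i)$ setwise. Because $T_i$ is abelian, the component $t_0$ contributes trivially to conjugation on $T_i$, so $\sigma_Q$ acts on $A$ as $\sigma$, giving $Z(S_i) = T_i^{\sigma} \cap A = A^{\sigma_Q}$. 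Provided $A$ is $\phi$-stable, $\phi(\sigma_Q)$ also generates the cyclic quotient $Q/A \cong \Z/p_i$ and so acts on $A$ as $\sigma_Q^{a}$ for some $a \in (\Z/p_i)^{\times}$, which has the same fixed-point set on $A$ as $\sigma_Q$; hence $\phi(Z(S_i)) = A^{\phi(\sigma_Q)} = Z(S_i)$. The $\phi$-stability of $A$ comes from characteristicness in the main cases: when the identity component $Q_0 \neq 0$, the element $\sigma_Q$ acts faithfully on $Q_0$ (any $\sigma$-invariant subtorus would have to lie in the finite set $Z(S_i)$), so $A = C_Q(Q_0)$ is characteristic; when $Q_0 = 0$ and $Q$ is non-abelian, $A$ is the unique maximal abelian normal subgroup; the degenerate case $Q \cong Z(S_i) \times \langle \sigma_Q \rangle \cong (\Z/p_i)^2$ I would treat separately, using the explicit description of $\Aut_{\Ff_i}(Q)$ coming from the Weyl group $G_i$ of $X_i$ (equivalently, via Proposition \ref{Centralizers} which identifies $C_{\Ff_i}(Z(S_i))$ with the fusion system of $SU(p_i)$).

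With the invariance established, restriction to $Z(S_i)$ defines a homomorphism $\Aut_{\Ff_i}(Q) \to \Aut(Z(S_i))$ whose kernel is precisely $\Aut_{C_{\Ff_i}(Z(S_i))}(Q)$: since $Z(S_i) \leq Q$, the definition of the centralizer fusion system reduces to the condition $\phi|_{Z(S_i)} = \id$. As $Z(S_i) \leq Z(Q)$, inner automorphisms act trivially, and the map descends to a short exact sequence
\[ 1 \to \Out_{C_{\Ff_i}(Z(S_i))}(Q) \to \Out_{\Ff_i}(Q) \to A' \to 1 \]
with $A' \leq \Aut(Z(S_i)) \cong (\Z/p_i)^{\times}$, a group of order dividing $p_i - 1$, hence coprime to $p_i$. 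The group-theoretic fact from the first paragraph then yields $O_{p_i}(\Out_{\Ff_i}(Q)) = O_{p_i}(\Out_{C_{\Ff_i}(Z(S_i))}(Q))$, so $Q$ is $\Ff_i$-radical if and only if it is $C_{\Ff_i}(Z(S_i))$-radical. The main obstacle will be completing the case analysis of $\phi$-stability of $A$ in the degenerate elementary abelian case, where no intrinsic group-theoretic characterization of $A$ inside $Q$ is available and one must invoke the concrete Aguad\'e-Zabrodsky data.
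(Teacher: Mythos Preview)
Your overall strategy---restriction to $Z(S_i)$ gives a homomorphism $\Out_{\Ff_i}(Q) \to \Aut(Z(S_i))$ with kernel $\Out_{C_{\Ff_i}(Z(S_i))}(Q)$ and image of order prime to $p_i$, then $O_{p_i}(G)=O_{p_i}(H)$ for $H\trianglelefteq G$ of prime-to-$p_i$ index---is exactly the paper's. The difference is in how you establish that $Z(S_i)$ is $\Aut_{\Ff_i}(Q)$-invariant, and here your detour through the characteristicness of $A = Q\cap T_i$ has a genuine gap. In your Case~2 ($Q_0=0$, $Q$ non-abelian) the claim that $A$ is the unique maximal abelian normal subgroup of $Q$ fails: the centric radical $\Gamma_i$ is a non-abelian group of order $p_i^3$, and in any such group each of the $p_i+1$ subgroups of order $p_i^2$ is abelian and normal, so $A$ is not characteristic there.

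The paper sidesteps this entirely. First it disposes of your ``degenerate'' case by observing that if $A=Z(S_i)$ then $Q\cong(\Z/p_i)^2$ is $\Ff_i$-subconjugate to $T_i$ and hence not centric; thus $A\supsetneq Z(S_i)$ always. It then computes $Z(Q)$ directly: elements of $Q$ outside $A$ act on $A$ via a nontrivial power of $\sigma$, and since $T_i^{\sigma}=Z(S_i)\subsetneq A$ they cannot centralize $A$; hence $Z(Q)=A^{\sigma}=Z(S_i)$. Because $Z(Q)$ is characteristic in $Q$, the invariance of $Z(S_i)$ under $\Aut_{\Ff_i}(Q)$ is immediate, with no case analysis and no appeal to the explicit Aguad\'e--Zabrodsky data. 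Your argument is easily repaired by replacing the $A$-stability discussion with this observation.
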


\begin{proof}
If $Q$ is centric, $Q$ can not be contained in $T_i$. Therefore, $Q$ must be a semidirect product of 
$ Q \cap T_i \leq T_i $ and $ \Z / p_i $. Since $Q$ is centric, we must have $Z(S_i) \leq Q $ and so $ Z(S_i) \leq
Q \cap T_i $. Recall that $ Z(S_i) \cong \Z / p_i$. If we had $ Z(S_i) = Q \cap T_i$, then $Q = \Z / p_i \times \Z /p_i$, 
but in this case $Q$ is $\Ff$-subconjugate to $T_i$ and it can not be centric. Therefore $ Z(S_i) \lneq Q \cap T_i$ 
and so $ Z(Q) = (Q \cap T_i)^{\Z /p_i} = Z(S_i) $. 

Restriction induces a map $ \Aut _{\Ff_i}(Q) \to \Aut(Z(Q)) = \Aut (Z(S_i)) = \Out (Z(S_i)) $, whose kernel 
is $ \Aut _{C_{\Ff_i}(Z(S_i))}(Q) $. Since $\Inn(Q)$ is in the kernel of this map, there is an induced map 
$ \Out _{\Ff_i}(Q) \to \Out (Z(S_i)) $ with kernel $ \Out _{C_{\Ff_i}(Z(S_i))}(Q) $. By Lemma 3.12 in \cite{CLN}, 
if $Q$ is radical in $\Ff_i$, then it is radical in $C_{\Ff_i}(Z(S_i))$. On the other hand, if $Q$ is not 
radical in $\Ff_i$, let $B$ be a nontrivial normal $p$-subgroup of $\Out_{\Ff_i}(Q)$. Its restriction to 
$ \Out (Z(S_i))$ must be trivial because $ \Out (Z(S_i)) \cong \Z / (p_i-1) $, therefore $B$ is a nontrivial 
normal $p$-subgroup of $\Out_{C_{\Ff_i}(Z(S_i))}(Q)$.
\end{proof}

Now we use Lemma \ref{CentricCentralizer} with $ E = Z(S_i)$ and Lemma \ref{RadicalCentralizer}, and so the only 
other centric radical subgroups of $\Ff _i $ must be centric radical in the fusion system $C_{\Ff_i}(Z(S_i))$. 
This fusion system coincides with the fusion system of the $p$-compact group $C_{X_i}(Z(S_i))$ by Proposition 
\ref{Centralizers}.

By Example 4.14.(1) in \cite{M1}, $C_{N(T_i)}(Z(S_i))$ is 
the normalizer of the maximal torus of $C_{X_i}(Z(S_i))$. Since $p_i$ is odd, $N(T_i)$ is 
a semi-direct product and so $C_{N(T_i)}(Z(S_i))$ must be $T_i \rtimes W(SU(p_i)) = N(SU(p_i))$. 
Corollary 7.21 in \cite{M2} says that polynomial $p$-compact 
groups are determined by their maximal torus normalizers and so the $p$-compact group 
$C_{X_i}(Z(S_i))$ is $SU(p_i)$.

From \cite{O}, we know that the 
only other stubborn subgroup of $SU(p_i)$ is the subgroup $ \Gamma_i \leq S_i $ generated by $Z(S_i)$, 
$(1,\xi _i,\xi _i^2, \ldots, \xi _i^{p_i-1})$ and $ \sigma $. Therefore, up to $\Ff_i$-conjugacy, 
the centric radical subgroups of the fusion system $\Ff_i$ associated to the Aguad\'e-Zabrodsky 
$p$-compact group $X_i$ are $T_i$, $S_i$ and $\Gamma_i$. Hence the length of $\Or (\Ff_i ^{cr})$ 
is one. By Corollary \ref{LowDepth}, there is a unitary embedding of $ BX_i $.

We collect the results of this section about unitary embeddings of $p$-compact
groups in the following theorem.

\begin{theorem}
Let $(X,BX,e)$ be a Clark-Ewing space or an Aguad\'e-Zabrodsky space. Then
$BX$ has a unitary embedding.
\end{theorem}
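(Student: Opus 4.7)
The plan is simply to collect the two computations of $l(\Or(\Ff^{cr}))$ already carried out in the two subsections and feed them into Corollary \ref{LowDepth}.

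First, for a Clark-Ewing space $X$, I would recall that since $|W|$ is prime to $p$, a discrete approximation $T$ to the maximal torus $\hat T$ is already a maximal discrete $p$-toral subgroup of $X$, so the Sylow $S$ of the associated $p$-local compact group equals $T$ and is abelian. Every subgroup of an abelian group equals its $S$-centralizer, so $T$ itself is the unique $\Ff$-centric subgroup, and it is trivially $\Ff$-radical; thus $\Or(\Ff^{cr})$ has one isomorphism class of objects and $l(\Or(\Ff^{cr}))=0<3$. Corollary \ref{LowDepth} then produces a unitary embedding of $BX\simeq |\Ll|\pcom$.

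Second, for an Aguadé-Zabrodsky space $X_i$, I would invoke the identification already established in the preceding paragraphs: up to $\Ff_i$-conjugacy the $\Ff_i$-centric radical subgroups of $S_i$ are exactly $T_i$, $S_i$ and $\Gamma_i$. From the inclusions $T_i\lneq S_i$ and $\Gamma_i\lneq S_i$ (there are no non-iso morphisms between $T_i$ and $\Gamma_i$, as they are not $\Ff_i$-subconjugate in either direction), the longest strictly increasing chain in $\Or(\Ff_i^{cr})$ has the form $T_i<S_i$ or $\Gamma_i<S_i$, so $l(\Or(\Ff_i^{cr}))=1<3$. Applying Corollary \ref{LowDepth} again yields a unitary embedding of $BX_i\simeq |\Ll_i|\pcom$.

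There is no real obstacle at this stage: both verifications of the length bound have been done in the two subsections, and the existence of a faithful $\Ff$-invariant representation—the other ingredient hidden inside Corollary \ref{LowDepth}—is provided in full generality by Theorem \ref{FaithfulFusion}. If anywhere deserves a line of care, it is the chain-length check for the Aguadé-Zabrodsky case, where one should briefly note that $T_i$ and $\Gamma_i$ are not related by a non-isomorphism in $\Or(\Ff_i^{cr})$ (which is clear from the fact that $T_i$ is abelian of rank $p_i-1$ while $\Gamma_i$ contains the non-toral element $\sigma$, so neither is $\Ff_i$-subconjugate to the other), so that $l(\Or(\Ff_i^{cr}))$ is genuinely $1$ rather than $2$; in any case both values are $<3$, so the conclusion of Corollary \ref{LowDepth} applies.
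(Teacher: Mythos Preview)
Your proposal is correct and matches the paper's approach exactly: the theorem is stated in the paper as a summary of the two subsections, each of which computes $l(\Or(\Ff^{cr}))$ (zero for Clark--Ewing, one for Aguad\'e--Zabrodsky) and then invokes Corollary \ref{LowDepth}. Your extra remark that $T_i$ and $\Gamma_i$ are not $\Ff_i$-subconjugate is a harmless refinement---the paper simply asserts $l(\Or(\Ff_i^{cr}))=1$---but either way the bound $l<3$ is all that is needed.
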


%%%%%%%%%%%%%%%%%%%%%%%%%%%%%%%%%%%%%%%%%%%%%%%%%%%%%%%%%%%%%%%%%%
\section{Unitary embeddings of $p$-local compact groups}
\label{Examples}
%%%%%%%%%%%%%%%%%%%%%%%%%%%%%%%%%%%%%%%%%%%%%%%%%%%%%%%%%%%%%%%%%%

A recent preprint \cite{BLO4} shows that $p$-completions of 
finite loop spaces are also modeled by $p$-local compact groups. 
More generally, if $ f \colon X \to Y$ is a finite regular covering 
space, where $X$ is the classifying space of a $p$-local compact 
group, then $Y \pcom$ is the classifying space of a $p$-local 
compact group.

If $G$ is a group, we use $ G \wr \Sigma_n$ to denote the semidirect
product of $G^n$ and $\Sigma_n$, where $\Sigma_n$ acts by
permuting the coordinates of $G^n$. Following the notation in \cite{CL}, 
if $X$ is a space we denote by $ X \wr \Sigma_n$ the space 
$X^n \times_{\Sigma_n} E\Sigma_n$ where $\Sigma_n$ acts by permuting the 
coordinates of $X^n$. Recall that $ B(G \wr \Sigma_n) \simeq BG \wr \Sigma_n$.

\begin{proposition}
\label{RegularCovering}
Let $f \colon X \to Y$ be a finite regular covering of a connected
space $Y$, where $X$ is the classifying space of a $p$-local compact 
group. If $X$ has a unitary embedding, then $Y$ has a unitary embedding.
\end{proposition}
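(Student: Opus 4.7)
The plan is to build the desired unitary embedding of $Y$ by a wreath-product construction applied to the given embedding $g \colon X \to BU(n) \pcom$. Let $G = \pi_1(Y)/f_*\pi_1(X)$ be the (finite) deck transformation group, of order $m$, acting freely on $X$ with $X/G \simeq Y$. Define the $G$-equivariant map
\[ \tilde g \colon X \to (BU(n) \pcom)^m, \qquad \tilde g(x)_\sigma = g(\sigma^{-1} x), \]
where $G$ acts on the target by permuting the $m$ factors via its regular representation $G \hookrightarrow \Sigma_m$. Passing to homotopy orbits, using $X_{hG} \simeq Y$ (since the $G$-action on $X$ is free), and composing with the natural maps
\[ \bigl( (BU(n) \pcom)^m \bigr)_{hG} \to \bigl( (BU(n) \pcom)^m \bigr)_{h\Sigma_m} \to BU(nm) \pcom \]
produces $\phi \colon Y \to BU(nm) \pcom$. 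The last map extends the block-permutation classifying map $B(U(n) \wr \Sigma_m) \to BU(nm)$ across the $p$-equivalence $B(U(n) \wr \Sigma_m) \to \bigl( (BU(n) \pcom)^m \bigr)_{h\Sigma_m}$ to the $p$-complete target.

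To show $\phi$ is a homotopy monomorphism I invoke Theorem \ref{HomotopyMonomorphism}: surjectivity on $\pi_1$ holds automatically as $BU(nm) \pcom$ is simply connected, so it suffices to verify that the induced Sylow homomorphism $\rho_\phi \colon S_Y \to S_{U(nm)}$ is injective. A direct unwinding of the construction shows that the composite $\phi \circ f \colon X \to BU(nm) \pcom$ equals the direct sum $\bigoplus_{\sigma \in G} g \circ \sigma^{-1}$. Its associated Sylow homomorphism is the direct sum of the maps $\rho_g \circ \sigma^{-1}_*$, where $\sigma^{-1}_*$ is the automorphism of $S_X$ induced by the self-equivalence $\sigma^{-1}$ of $X$; each summand is injective because $\rho_g$ is, so this Sylow map is injective, and it factors through the Sylow map $\rho_f \colon S_X \to S_Y$ of $f$, forcing $\rho_f$ to be injective as well.

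To promote injectivity on $\rho_f(S_X)$ to injectivity on all of $S_Y$, I identify $\rho_\phi$ with the representation induced from $S_X$ to $S_Y$ of the faithful representation $\rho_g$, which is the Sylow-level counterpart of the wreath-product construction carried out above. Induction preserves injectivity in general (the kernel of an induced representation is contained in the core of the kernel of the original), so $\rho_\phi$ is injective and $\phi$ is a homotopy monomorphism by Theorem \ref{HomotopyMonomorphism}. The main technical obstacle is making this identification precise: one must analyse the model of the $p$-local compact group structure on $Y \pcom$ furnished by \cite{BLO4} to see that $S_X$ embeds as a normal subgroup of $S_Y$ of $p$-power index, and verify that the wreath-product construction restricts on Sylows to the classical induced representation.
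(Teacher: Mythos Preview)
Your wreath-product strategy is essentially the same construction the paper uses, and the reduction via Theorem \ref{HomotopyMonomorphism} to injectivity of the induced Sylow map is exactly right. However, you have correctly identified, and then left open, the only genuinely difficult step: the identification of $\rho_\phi$ with $\ind_{S_X}^{S_Y}\rho_g$ and the attendant claim that $S_X \trianglelefteq S_Y$ with $p$-power index. Neither of these follows formally from the covering-space picture alone; you need to know concretely how the Sylow $S_Y$ of $Y\pcom$ sits over $S_X$, and self-equivalences $\sigma$ of $X$ do not in general induce honest automorphisms of $S_X$ (only fusion-isomorphisms to conjugates), so the phrase ``the automorphism $\sigma^{-1}_*$ of $S_X$'' already hides a coherence problem.

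The paper circumvents this by working one step back from the $p$-completion. From \cite{BLO4} one extracts an honest group extension $1 \to \widetilde{\Gamma} \to \Gamma \to G \to 1$ with $\widetilde{\Gamma} = \Aut_{\widetilde{\Ll}}(\widetilde{S})$, and a maximal discrete $p$-toral $S \leq \Gamma$ sitting in $1 \to \widetilde{S} \to S \to \pi \to 1$ with $\pi \leq G$ a $p$-group. The Kahn--Priddy pretransfer then gives a strictly commutative square relating $BS \to E$ to $B(\widetilde{S} \wr \Sigma_{|\pi|}) \to |\widetilde{\Ll}| \wr \Sigma_{|G|}$, where the top map is the Kaloujnine--Krasner monomorphism. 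From there one only has to chase injectivity through a chain of explicit group monomorphisms $S \hookrightarrow \widetilde{S} \wr \Sigma_{|\pi|} \hookrightarrow S_1 \wr \Sigma_{|\pi|} \hookrightarrow U(N) \wr \Sigma_{|G|} \hookrightarrow U(|G|N)$, each of which is visibly injective. This replaces your proposed direct identification of $\rho_\phi$ as an induced representation, which is morally the same statement but hard to make precise at the level of homotopy-theoretic Sylows.
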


\begin{proof}
Let $ X = | \widetilde{\Ll} | \pcom$ for some $p$-local compact group $(\widetilde{S},\widetilde{\Ff},\widetilde{\Ll})$
and let $G$ be the fundamental group of $Y$. By Proposition 7.1 of \cite{BLO4}, there is a fibration 
$ | \widetilde{\Ll} | \to E \to BG $ such that its fiberwise $p$-completion is the fibration 
$ X \to Y \to BG$ associated to the covering $f$. If $\widetilde{\Gamma} = \Aut_{\widetilde{\Ll}}(\widetilde{S})$,
then the classifying map of the fibration $ | \widetilde{\Ll} | \to E \to BG $ defines an extension of groups
\[ 1 \to \widetilde{\Gamma} \to \Gamma \to G \to 1 \]
by the proof of Theorem 7.3 in \cite{BLO4}. By Proposition 5.4 of \cite{BLO4} 
there is a maximal discrete $p$-toral subgroup $S$ of $\Gamma$ which fits into 
an extension of groups $ 1 \to \widetilde{S} \to S \to \pi \to 1 $, where $\pi$ is a finite 
$p$-subgroup of $G$. 

Therefore there is a map of fibrations
\[
\diagram
B\widetilde{S} \rto \dto & BS \rto \dto & B\pi \dto \\
| \widetilde{\Ll} | \rto & E \rto & BG
\enddiagram
\]
where the rightmost vertical map is induced by the inclusion $\pi \to G$. 
By the naturality of the Kahn-Priddy pretransfer constructed in Section 
1 of \cite{KP} there is a commutative diagram
\[ 
\diagram
BS \rto \dto & B\widetilde{S} \wr \Sigma_n \dto \simeq B(\widetilde{S} \wr \Sigma_n) \\
E \rto & | \widetilde{\Ll} | \wr \Sigma_m
\enddiagram
\]
where $n$ and $m$ are the orders of $\pi$ and $G$, respectively. By construction, 
the upper horizontal map is induced by the Kaloujnine-Krasner monomorphism 
$ S \to \widetilde{S} \wr \Sigma_n$ (see \cite{KK} or page 74 of \cite{AM}). 
Let $R_1$ be the image of this monomorphism and note that $ E \pcom \simeq Y \pcom$
by Theorem F.1 of \cite{F}. Then there is a commutative diagram up to homotopy
\[ 
\diagram
BS \rto \dto & BR_1 \dto \\
Y \pcom \rto & ( X \wr \Sigma_m ) \pcom
\enddiagram
\]
where the vertical map on the left is the inclusion of the Sylow $S$
in the $p$-local compact group $Y \pcom$.

Let $ X \to BU(N) \pcom$ be a unitary embedding. By Theorem 6.3(a) of
\cite{BLO3}, it induces a homomorphism $ \rho \colon \widetilde{S} \to S_1$ 
between their Sylows which must be a monomorphism by Theorem 
\ref{HomotopyMonomorphism}. Therefore we have a commutative diagram up to homotopy
\[
\diagram
B(\widetilde{S} \wr \Sigma_n) \rto^{B\alpha} \dto & B(S_1 \wr \Sigma_n) \dto \\
( X \wr \Sigma_m) \pcom \rto & ( BU(N) \pcom \wr \Sigma_m ) \pcom \simeq B(U(N) \wr \Sigma_m) \pcom
\enddiagram
\]
where $ \alpha \colon \widetilde{S} \wr \Sigma_n \to S_1 \wr \Sigma_n$ is induced
by $\rho$ and in particular, it is a monomorphism. Let $ R_2 = \alpha(R_1)$. 
The monomorphism $ U(N) \wr \Sigma_m \to U(mN) $ determines a map 
$ B( U(N) \wr \Sigma_m ) \pcom  \to BU(mN) \pcom$. By Theorem 6.3(a) of \cite{BLO3} 
applied to the composition
\[ BR_2 \to B(S_1 \wr \Sigma_n) \to B(U(N) \wr \Sigma_m) \pcom \to BU(mN) \pcom \]
there is a commutative diagram up to homotopy
\[
\diagram
BR_2 \rto^{B\beta} \dto & BS_2 \dto \\
B(U(N) \wr \Sigma_m) \pcom \rto & BU(mN) \pcom
\enddiagram
\]
for a certain homomorphism $ \beta \colon R_2 \to S_2$, where $S_2$ is a Sylow
of $BU(mN) \pcom$. Since $R_2$ and $S_2$ are subgroups of $U(N) \wr \Sigma_m$ and $U(mN)$, respectively
(see Definition 9.1 of \cite{BLO3}), $\beta$ is a monomorphism.

The composition $ g \colon Y \pcom \to BU(mN) \pcom$ is a map of $p$-local compact groups 
and the induced homomorphism between their Sylows is injective. By Theorem 
\ref{HomotopyMonomorphism}, $g$ is a homotopy monomorphism, that is, a unitary embedding of $Y \pcom$.
\end{proof}

Note that the following theorem depends on the existence of unitary
embeddings for any $p$-compact group.

\begin{theorem}
Let $BX$ be any path connected space such that $ \Omega BX$ is 
$\F_p$-finite. Then $ BX \pcom $ has a unitary embedding.
In particular, if $(X,BX,e)$ is a finite loop space, then $BX \pcom$ has
a unitary embedding.
\end{theorem}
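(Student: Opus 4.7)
The plan is to reduce to the $p$-compact group case and then apply Proposition \ref{RegularCovering}. The ``in particular'' clause follows from the general statement: for a finite loop space $(X,BX,e)$ the space $\Omega BX \simeq X$ is a finite CW complex, so $H^*(\Omega BX;\F_p)$ is finite and the hypothesis of the general theorem is satisfied. Hence it suffices to prove the first assertion.

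Set $Y = BX\pcom$. Since $\Omega BX$ is $\F_p$-finite, the set $\pi_0(\Omega BX) = \pi_1(BX)$ is finite, and after $p$-completion $\pi_1(Y)$ is a finite $p$-group. Let $q\colon \widetilde{Y} \to Y$ be the universal covering; it is a finite regular covering with deck group $\pi_1(Y)$. The loop space $\Omega \widetilde{Y}$ is the identity component of $\Omega Y$, and a direct check using the hypothesis on $\Omega BX$ shows that $\widetilde{Y}$ is a simply connected $p$-complete space whose loop space is $\F_p$-finite. Therefore $\widetilde{Y}$ is the classifying space of a connected $p$-compact group, and by Section 10 of \cite{BLO3} of a $p$-local compact group $(\widetilde{S},\widetilde{\Ff},\widetilde{\Ll})$.

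By the classification of connected $p$-compact groups in \cite{AG} and \cite{AGMV}, together with the Peter-Weyl theorem and the existence of unitary embeddings for the irreducible exotic $p$-compact groups (generalized Grassmannians from \cite{C1}, $DI(4)$ from \cite{Z1}, \cite{Z15}, \cite{Z2}, and the Clark-Ewing and Aguad\'e-Zabrodsky spaces from Section \ref{pcompact}), every connected $p$-compact group admits a unitary embedding. Applying this to $\widetilde{Y}$ produces such an embedding, and Proposition \ref{RegularCovering} applied to the finite regular covering $q$ then yields a unitary embedding of $Y\pcom \simeq Y = BX\pcom$. The main obstacle is the middle step: verifying that the universal cover of $BX\pcom$ is again the classifying space of a $p$-compact group relies on the interaction of $p$-completion with the universal covering, and this behaves well here precisely because the hypothesis on $\Omega BX$ forces $\pi_1(BX\pcom)$ to be a finite $p$-group.
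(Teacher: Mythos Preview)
Your overall strategy---pass to a universal cover that is the classifying space of a connected $p$-compact group, invoke the Peter--Weyl theorem there, and then apply Proposition~\ref{RegularCovering}---is exactly the paper's strategy. The difference is in \emph{where} you take the universal cover, and this is where your argument has a real gap.

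You take the universal cover of $Y=BX\pcom$ and then assert that ``a direct check'' shows $\widetilde{Y}$ is simply connected, $p$-complete, and has $\F_p$-finite loop space. None of these is direct. That $\pi_1(BX\pcom)$ is a finite $p$-group is not a formal consequence of $\pi_1(BX)$ being finite; that the universal cover of a $p$-complete space is again $p$-complete requires a fibre lemma whose hypotheses (typically simply connected base, or a nilpotency condition on the fibration) you have not verified; and finally, identifying $\Omega\widetilde{Y}$ with something $\F_p$-finite requires knowing how $p$-completion interacts with the covering, which you have not established. The phrase ``direct check'' papers over precisely the delicate points.

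The paper sidesteps all of this by taking the universal cover $BZ$ of $BX$ \emph{before} completing. Then $\Omega BZ$ is visibly a connected component of $\Omega BX$ and hence $\F_p$-finite, $BZ$ is simply connected hence $p$-good, and $BZ\pcom$ is immediately the classifying space of a connected $p$-compact group. One then applies Proposition~\ref{RegularCovering} to the fiberwise $p$-completion $BZ\pcom \to \widetilde{BX} \to B\pi_1(BX)$, noting that $\widetilde{BX}\pcom \simeq BX\pcom$. Observe also that the deck group here is $\pi_1(BX)$, which is merely finite---Proposition~\ref{RegularCovering} does not require it to be a $p$-group, so your detour through $\pi_1(BX\pcom)$ is unnecessary even if it could be justified. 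The cleanest repair of your argument is simply to move the universal cover before the completion, which is precisely what the paper does.
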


\begin{proof}
Let $X = \Omega BX $. Since $X$ is $\F_p$-finite, $H_0(X;\F_p)$ is finite
and therefore the group $\pi_1(BX) \cong \pi_0(X)$ is finite. So $BX$ is $p$-good 
by Proposition VII.5.1 in \cite{BK}. Consider the universal cover $BZ$ of $BX $ 
which fits into a homotopy fibration sequence $ BZ \to BX \to B\pi_1(X) $ and
the fiberwise $p$-completion $ BZ \pcom \to \widetilde{BX} \to B\pi_1(X)$.

Note that $ Z = \Omega BZ$ is a connected component of $X$ and thus it is $\F_p$-finite.
The space $BZ$ is $p$-good because it is simply connected, therefore $BZ \pcom$ and 
$Z \pcom$ are $p$-complete. Hence $Z$ is $p$-good and $Z \pcom$ is $\F_p$-finite.  
So $BZ \pcom$ is the classifying space of a $p$-compact group. Since $BZ \pcom$ has a unitary embedding, 
so does $ \widetilde{BX} \pcom \simeq BX \pcom$ by Proposition \ref{RegularCovering}.         
\end{proof}

Now we study the two exotic $3$-local compact groups constructed in Section 5.4.3 of \cite{G} 
from the exotic $3$-local finite groups of \cite{DRV}. We will use the results in Section \ref{Embeddings} 
to show the existence of unitary embeddings.

For each $k \geq 1$, consider the finite $3$-group
\[ S_k = \langle s,s_1,s_2 \mid s^3 = s_1^{3^k} = s_2^{3^k} =1, [s_1,s_2] = 1, [s, s_1] = s_2, [s, s_2] = (s_1s_2)^{-3} \rangle \]
and form the union $ S = \cup_{k \geq 1} S_k $ with respect to the inclusions $S_k \to S_{k+1} $ 
that take $s$ to $t$ and $s_i$ to $t_i^3$, where $t$ and $t_i$
are the generators for the analogous presentation of $S_{k+1}$. The group $S$ is a discrete 
$3$-toral group, and it is an extension of $ T = (\Z / 3^{\infty})^2 $ by $ \Z /3 $. Let
$z_1$ and $z_2$ be the elements of order $3$ in $T$ corresponding to $s_1$ and $s_2$
via the inclusion $S_1 \to S $. 

Section 5.4 of \cite{G} shows the existence of two saturated fusion systems $\Ff(2)$ and 
$\Ff(3)$ over $S$, and respective centric linking systems $\Ll(2)$ and $\Ll(3)$.
These fusion systems have three isomorphism classes of centric radical subgroups,
two of which are given by $T$ and $S$. The third is given by the subgroup $E_0 = \langle z_2, s \rangle$ in the
case of $\Ff(2)$ and $V_0 = \langle z_1, z_2, s \rangle $ in the case of $\Ff(3)$. Note that neither $E_0$ nor $V_0$ are
subgroups of $T$, therefore the length of $\Or(\Ff(i)^{cr})$ is one.

\begin{corollary}
There exist unitary embeddings of the $3$-local compact groups $(S,\Ff(2),\Ll(2))$ and $(S,\Ff(3),\Ll(3))$.
\end{corollary}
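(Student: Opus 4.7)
The plan is to apply Corollary~\ref{LowDepth} directly, since the essential data needed (length of the orbit category of centric radical subgroups) has already been recorded in the paragraph immediately preceding the statement.

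First, I would fix $i \in \{2,3\}$ and list the three $S$-conjugacy classes of centric radical subgroups of $\Ff(i)$, namely $T$, $S$, and $E_0$ (respectively $V_0$), as recalled from Section 5.4 of \cite{G}. To bound $l(\Or(\Ff(i)^{cr}))$, I need to determine the poset structure of this finite EI-category in the sense of Definition~\ref{Length}, i.e.\ decide between which pairs of these objects there exists a non-isomorphism. The obvious non-isomorphism is the inclusion $T \hookrightarrow S$, so $T < S$ in the sense of the definition. Similarly, $E_0 \hookrightarrow S$ (resp.\ $V_0 \hookrightarrow S$) gives $E_0 < S$ (resp.\ $V_0 < S$).

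The crucial point is that no chain of length $2$ exists among these three classes. To see this, I would use the observation, already stated in the excerpt, that neither $E_0$ nor $V_0$ is contained in $T$; combined with the fact that $T$ and $E_0$ (resp.\ $V_0$) are not $\Ff(i)$-conjugate (they have different isomorphism types as abstract groups and different orders in the sense of $|\cdot|$), this means there are no non-isomorphisms in $\Or(\Ff(i)^{cr})$ between $T$ and $E_0$ in either direction, and similarly for $V_0$. Thus the longest strictly increasing chain in the poset is of length $1$, and $l(\Or(\Ff(i)^{cr})) = 1 < 3$.

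Having established this bound, the conclusion follows immediately from Corollary~\ref{LowDepth}: both $(S,\Ff(2),\Ll(2))$ and $(S,\Ff(3),\Ll(3))$ admit unitary embeddings. The main (and only) obstacle I foresee is the verification that there are no hidden $\Ff(i)$-morphisms raising the length above $1$ — but this reduces to the non-conjugacy and non-containment assertions that are already part of the structural description of $\Ff(2)$ and $\Ff(3)$ in \cite{G}, so the argument is essentially a bookkeeping reduction to Corollary~\ref{LowDepth}.
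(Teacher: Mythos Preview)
Your proposal is correct and follows exactly the paper's approach: the paper establishes in the paragraph preceding the corollary that $l(\Or(\Ff(i)^{cr}))=1$ (using that neither $E_0$ nor $V_0$ is a subgroup of $T$) and then the corollary is an immediate application of Corollary~\ref{LowDepth}. Your write-up is simply a more detailed unpacking of why no chain of length $2$ exists among the three isomorphism classes, which the paper leaves implicit.
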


There is work in progress of A. Gonz\'alez and A. Ruiz to construct generalizations
of these $3$-local compact groups for any other prime $p$, which by construction also
satisfy that the length of the corresponding orbit categories is one.

%%%%%%%%%%%%%%%%%%%%%%%%%%%%%%%%%%%%%%%%%%%%%%%%%%%%%%%%%%%%%%%%%%
\section{Homological consequences}
\label{Consequences}
%%%%%%%%%%%%%%%%%%%%%%%%%%%%%%%%%%%%%%%%%%%%%%%%%%%%%%%%%%%%%%%%%%

The existence of unitary embeddings of a $p$-local compact group $(S,\Ff,\Ll)$
has consequences on the $p$-local cohomology of $|\Ll| \pcom$ and the Grothendieck ring
of maps from $|\Ll| \pcom $ to $BU(n) \pcom$ for $n \geq 1$, which we discuss in this short section.

\begin{proposition}
\label{Noetherian}
Let $(S,\Ff,\Ll)$ be a $p$-local compact group with a faithful $\Ff$-invariant
representation that satisfies the hypothesis of Theorem \ref{AlmostSurjective}.
Then the rings $ H^*(| \Ll | \pcom ; \Z \pcom ) $ and $ H^*(| \Ll | \pcom ; \F _p) $ 
are Noetherian.
\end{proposition}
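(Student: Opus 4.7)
The plan is to exploit the unitary embedding $f \colon |\Ll|\pcom \to BU(N)\pcom$ supplied by Theorem \ref{FaithfulMonomorphism}. Since $H^*(BU(N)\pcom;\F_p) \cong \F_p[c_1,\ldots,c_N]$ is a Noetherian ring, it suffices to prove that the induced map $f^*$ realizes $H^*(|\Ll|\pcom;\F_p)$ as a finitely generated module over $H^*(BU(N)\pcom;\F_p)$. Standard commutative algebra then implies that a commutative ring which is module-finite over a Noetherian subring is itself Noetherian, giving the ring-theoretic statement.

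For the module-finiteness, I would analyze the Serre spectral sequence of the homotopy fibration $F \to |\Ll|\pcom \to BU(N)\pcom$, where $F$ is $B\Z/p$-null by the definition of homotopy monomorphism. Since $BU(N)\pcom$ is simply connected, the local coefficients are trivial and $E_2 \cong H^*(BU(N)\pcom;\F_p) \otimes H^*(F;\F_p)$. The crucial reduction is to show $H^*(F;\F_p)$ is finite-dimensional over $\F_p$: granted this, the $E_2$-page is a finitely generated module over the Noetherian ring $H^*(BU(N)\pcom;\F_p)$, hence Noetherian, and the associated graded of $H^*(|\Ll|\pcom;\F_p)$ inherits finite generation, yielding the desired module-finiteness via $f^*$ by a standard filtration argument.

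The main obstacle is establishing finiteness of $H^*(F;\F_p)$. Following the strategy of the second half of the proof of Proposition \ref{PCompactMono}, the $B\Z/p$-nullity of $F$ together with $p$-completeness and finite type yields, via Lannes' theorem, the isomorphism $T_{\Z/p}(H^*(F;\F_p)) \cong H^*(F;\F_p)$; Schwartz's characterization (Theorem 6.2.1 of \cite{S}) then forces every element of $H^*(F;\F_p)$ to be nilpotent. Paired with finite generation of $H^*(F;\F_p)$ as an $\F_p$-algebra, which I would extract from the Eilenberg-Moore spectral sequence $\Tor_{H^*(BU(N)\pcom;\F_p)}(H^*(|\Ll|\pcom;\F_p),\F_p) \Rightarrow H^*(F;\F_p)$ using Noetherianness of the base ring, and with finite type of $F$ inherited from the homotopy colimit presentation of $|\Ll|\pcom$ in Proposition \ref{Rigidification} together with finite type of $BU(N)\pcom$, local finiteness forces $H^*(F;\F_p)$ to be finite-dimensional. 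For the $\Z\pcom$-coefficient statement, I would then deduce Noetherianness from the $\F_p$-case via the Bockstein spectral sequence, exploiting the $p$-completeness of $|\Ll|\pcom$ and the fact that finite generation of the mod $p$ reduction lifts to $H^*(|\Ll|\pcom;\Z\pcom)$ by a Nakayama-type argument.
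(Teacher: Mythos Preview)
Your overall strategy matches the paper's: exploit the unitary embedding $f$, show the homotopy fiber $F$ is $\F_p$-finite, and then conclude module-finiteness of $H^*(|\Ll|\pcom;\F_p)$ over the polynomial ring $H^*(BU(N)\pcom;\F_p)$ via the Serre spectral sequence. The divergence, and the gap, lies in how you establish $\F_p$-finiteness of $F$.

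Your Lannes--Schwartz argument genuinely needs $H^*(F;\F_p)$ to be finitely generated as an $\F_p$-algebra: local finiteness together with finite type alone does \emph{not} force finite total dimension (an exterior algebra on infinitely many odd-degree generators of increasing degree is a counterexample). You propose to extract this finite generation from the Eilenberg--Moore spectral sequence with $E_2 = \Tor_{R}\bigl(H^*(|\Ll|\pcom;\F_p),\F_p\bigr)$, invoking only Noetherianness of the base $R = H^*(BU(N)\pcom;\F_p)$. But Noetherianness of $R$ gives no control over $\Tor_R(M,\F_p)$ for an arbitrary $R$-module $M$; to conclude that this Tor is a finitely generated algebra one needs $M$ to be finitely generated over $R$ (or at least finitely generated as an $\F_p$-algebra), and that is exactly the statement you are trying to prove. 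In Proposition~\ref{PCompactMono} this circularity was broken because $H^*(BX;\F_p)$ was already known to be a finitely generated algebra, by Dwyer--Wilkerson; no such input is available for $|\Ll|\pcom$ here --- indeed, that is what the proposition is meant to establish.

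The paper breaks the circle differently: it never touches $H^*(|\Ll|\pcom;\F_p)$ until $F$ is already known to be $\F_p$-finite. Using a result of Farjoun, the fiber $F$ itself decomposes as a homotopy colimit over $\Or(\Ff^c)$ of the fibers $F_P$ of the restrictions $\widetilde{B}P\pcom \to BU(N)\pcom$. Each such restriction is a monomorphism of $p$-compact groups, so each $F_P$ is $\F_p$-finite. Since the orbit category has bounded higher limits at $p$ (Proposition~\ref{BoundedLimits}), the Bousfield--Kan spectral sequence then forces $H^*(F;\F_p)$ to be finite outright. The $\Z\pcom$ statement follows by Nakayama applied to $H^*(F;\Z\pcom)$ and a second run of the Serre spectral sequence, essentially as in your last sentence.
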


\begin{proof}
Let $f\colon |\Ll| \pcom \to BU(n) \pcom $ be a unitary embedding, 
which exists by Theorem \ref{FaithfulMonomorphism}.
Note that the restriction of $f$ to $\widetilde{B}P \pcom$
for each $P \leq S $ is a monomorphism of $p$-compact groups
and so the homotopy fiber $F_P$ of this restriction is $\F_p$-finite. 
By the Proposition in page 180 of \cite{F}, the homotopy fiber $F$ of $f$ is the homotopy colimit
of the spaces $F_P$ over the orbit category $\Or (\Ff)$. This category
has bounded limits at $p$ by Proposition \ref{BoundedLimits}, 
and so the Bousfield-Kan spectral sequence
shows that $F$ is $\F_p$-finite.

$ H^*(|\Ll| \pcom ; \F_p) $ becomes a finitely generated 
$ H^*(BU(n) \pcom ; \F_p)$-module via $f$ by a 
Serre spectral sequence argument. Since $ H^*(BU(n) \pcom ; \F_p)$
is Noetherian, so is $ H^*(|\Ll| \pcom ; \F_p) $. By the Nakayama
lemma, since $H^j(F;\Z \pcom) \otimes _{\Z \pcom} \F_p $ is zero
for $j$ large enough, the same happens for $H^j(F;\Z \pcom)$.
Using again the same Serre spectral sequence argument,
$ H^*(|\Ll| \pcom; \Z \pcom) $ becomes a finitely generated 
$ H^*(BU(n) \pcom ; \Z \pcom)$-module via $f$. 
Since $ H^*(BU(n) \pcom ; \Z \pcom ) $ is Noetherian, so
is $ H^*(X; \Z \pcom ) $.
\end{proof}

\begin{corollary}
Let $(S,\Ff,\Ll)$ be a $p$-local compact group such that at least 
one the following conditions hold:
\begin{enumerate}
\item $l(\Or(\Ff^{cr})) < 3 $.
\item $(S,\Ff,\Ll)$ models a finite loop space or a $p$-compact group. 
\item $(S,\Ff,\Ll)$ is one of the exotic $3$-local compact groups of \cite{G}.
\end{enumerate}
Then $H^*(|\Ll| \pcom; \Z \pcom)$ is Noetherian.
\end{corollary}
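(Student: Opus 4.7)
The plan is to show that in each of the three cases $(S,\Ff,\Ll)$ admits a unitary embedding $f\colon |\Ll|\pcom \to BU(n)\pcom$, and then to conclude by invoking the argument of Proposition \ref{Noetherian}. Inspection of the proof of that proposition reveals that its stated hypotheses (a faithful $\Ff$-invariant representation meeting the vanishing conditions of Theorem \ref{AlmostSurjective}) are used only to produce a unitary embedding via Theorem \ref{FaithfulMonomorphism}; everything that follows in the proof depends only on having such an embedding in hand.

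For case (1), Corollary \ref{LowDepth} directly provides a unitary embedding whenever $l(\Or(\Ff^{cr})) < 3$. For case (3), Section \ref{Examples} shows that the two exotic $3$-local compact groups of \cite{G} have $l(\Or(\Ff^{cr})) = 1$, so this case reduces to case (1). Case (2) splits into two subcases. For $p$-compact groups, the existence of a unitary embedding is the content of the third introductory theorem, which combines the classification of connected $p$-compact groups (\cite{AG}, \cite{AGMV}), the Peter-Weyl theorem for compact Lie groups, the embeddings for generalized Grassmannians \cite{C1} and $DI(4)$ (\cite{Z1}, \cite{Z15}, \cite{Z2}), and the Clark-Ewing and Aguad\'e-Zabrodsky results of Section \ref{pcompact}. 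For finite loop spaces, the existence follows from Proposition \ref{RegularCovering} applied to the universal cover, which is the classifying space of a $p$-compact group by the argument given at the end of Section \ref{Examples}.

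Once a unitary embedding $f$ has been produced, the Noetherian conclusion follows verbatim from the cohomological argument of Proposition \ref{Noetherian}: for each $\Ff$-centric $P$, the restriction $\widetilde{B}P\pcom \to BU(n)\pcom$ of $f$ is a monomorphism of $p$-compact groups with $\F_p$-finite fiber $F_P$; the homotopy fiber $F$ of $f$ is then the homotopy colimit of the $F_P$ over $\Or(\Ff)$, which is $\F_p$-finite by the Bousfield-Kan spectral sequence together with the bounded-higher-limits result of Proposition \ref{BoundedLimits}; the Serre spectral sequence of $F \to |\Ll|\pcom \to BU(n)\pcom$ with $\F_p$-coefficients then exhibits $H^*(|\Ll|\pcom;\F_p)$ as a finitely generated module over the Noetherian ring $H^*(BU(n)\pcom;\F_p)$; and a Nakayama-lemma argument upgrades this to the Noetherianness of $H^*(|\Ll|\pcom;\Z\pcom)$. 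The only subtlety I anticipate is a bookkeeping one: the statement of Proposition \ref{Noetherian} refers to a specific hypothesis that is not literally met in case (2), so the corollary is best read as a consequence of a slightly reformulated version of Proposition \ref{Noetherian} asserting that any $p$-local compact group admitting a unitary embedding has Noetherian $p$-adic cohomology. No new obstruction-theoretic input is needed.
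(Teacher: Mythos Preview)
Your proposal is correct and matches the paper's (implicit) approach: the corollary is stated without proof in the paper, and your reconstruction of why it follows---cases (1) and (3) satisfy the hypothesis of Proposition \ref{Noetherian} vacuously (there being no odd integers in the range $3 \leq 2i+1 \leq l$ when $l<3$), while case (2) uses the existence of a unitary embedding together with the observation that the proof of Proposition \ref{Noetherian} only needs such an embedding---is exactly what is intended. The one minor difference is that for $p$-compact groups the paper simply cites \cite{DW} together with \cite{ACFJS} rather than routing through the unitary-embedding argument, but your approach is equally valid.
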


For a $p$-compact group, this is the main result in \cite{DW} (combined with \cite{ACFJS}). 
\newline

Let $\K(X)$ be the Grothendieck ring of maps from $X$ to $BU(n) \pcom$ for $n \geq 1$. We make 
the observation that the maps 

\[ \Psi_n \colon [ |\Ll| \pcom, BU(n) \pcom] \to \higherlim{\Or(\Ff ^c)}{} \Rep (P,U(n)) \cong \Rep ^{\Ff}(S,U(n)) \]
from Section \ref{Embeddings} assemble to form a map 
\[ \Psi \colon \K (|\Ll| \pcom) \to \higherlim{\Or(\Ff ^c)}{} R(P) . \]
We denote this limit by $R^{\Ff}(S)$ because it coincides with the Grothendieck ring of $\Ff$-invariant 
representations, as we show now. Given $\chi = \alpha_1 - \alpha _2$ in $R^{\Ff}(S)$,
by Proposition \ref{DirectSummand}, there is a faithful $\Ff$-invariant representation
$\rho$ such that $\rho = \alpha_1 \oplus \beta $. But then
\[ \chi = \alpha_1 - \alpha_2 = (\alpha_1 + \beta) - (\alpha_2 + \beta) = \rho - (\alpha_2 + \beta) \]
Since $\rho$ and $\chi$ are $\Ff$-invariant, so is $\alpha_2 + \beta$, and this proves
our claim.

\begin{theorem}
\label{VectorBundles}
Let $(S,\Ff,\Ll)$ be a $p$-local compact group. If $l(\Or(\Ff^{cr}))<3$, then
$\Psi$ is surjective. If $l(\Or(\Ff^{cr}))<2$, then $\Psi$ is an isomorphism.
\end{theorem}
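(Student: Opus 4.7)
The plan is to deduce both claims from the obstruction theory of Section \ref{Embeddings}, specifically Remark \ref{SubrepresentationsExtend} for surjectivity and the corresponding uniqueness statement behind Remark \ref{Uniqueness} for injectivity, combined with Proposition \ref{DirectSummand}.

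For surjectivity, given $\chi \in R^{\Ff}(S)$, I would use the argument immediately preceding the theorem to write $\chi = \rho_+ - \rho_-$, where $\rho_+$ is a faithful $\Ff$-invariant representation and $\rho_-$ is $\Ff$-invariant. Setting $\tau = \rho_+ \oplus \rho_-$ gives a faithful $\Ff$-invariant representation with $\Irr(S,\rho_{\pm}) \subseteq \Irr(S,\tau)$. Since $l(\Or(\Ff^{cr})) < 3$, the range $3 \leq 2i+1 \leq l(\Or(\Ff^{cr}))$ in the hypothesis of Theorem \ref{AlmostSurjective} is empty, so the relevant obstruction groups vanish outright. Remark \ref{SubrepresentationsExtend} then yields a $k$ large enough that both $k\tau \oplus \rho_+$ and $k\tau \oplus \rho_-$ lift to maps $f_{\pm} \colon |\Ll|\pcom \to BU(N_{\pm})\pcom$, whence $\Psi([f_+]-[f_-]) = (k\tau+\rho_+)-(k\tau+\rho_-) = \chi$.

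For injectivity, suppose $[f_1]-[f_2] \in \ker\Psi$. Because $R^{\Ff}(S)$ is generated as a group by $\Ff$-invariant isomorphism classes of irreducibles, the restrictions $f_i|_{BS}$ are isomorphic as representations; let $\rho$ denote this common restriction, so that $f_1, f_2 \colon |\Ll|\pcom \to BU(n)\pcom$ are two lifts of $B\rho$. I would pick a faithful $\Ff$-invariant representation $\sigma$ containing every irreducible summand of $\rho$ and use the surjectivity just proved to produce a lift $\tilde\sigma \colon |\Ll|\pcom \to BU(M)\pcom$ of $k\sigma$, choosing $k$ large enough that every irreducible summand of $\rho + k\sigma$ lies in the stable range required by Proposition \ref{NaturalIsomorphism}. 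Both $f_1 \oplus \tilde\sigma$ and $f_2 \oplus \tilde\sigma$ then lift $\rho+k\sigma$, and the uniqueness obstructions at stage $n$ live in $\higherlim{\Or(\Ff^{cr})}{n} F^{\rho+k\sigma}_n$; Proposition \ref{NaturalIsomorphism} makes these functors zero for odd $n$ and isomorphic to the projective-valued $R(-,\rho+k\sigma)\pcom$ for even $n>0$. The $n=1$ group vanishes at once, and the groups with $n \geq 2$ vanish by Proposition \ref{BoundedLimits}, since $n > l(\Or(\Ff^{cr}))$. Thus all obstructions vanish, $f_1 \oplus \tilde\sigma \simeq f_2 \oplus \tilde\sigma$, and $[f_1] = [f_2]$ in $\K(|\Ll|\pcom)$.

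The hard part will be to extract the direct homotopy $f_1 \oplus \tilde\sigma \simeq f_2 \oplus \tilde\sigma$ without the multiplicative stabilization that appears in Remark \ref{Uniqueness}. That multiplier $m$ arises from the exponents of the torsion obstruction groups and is not needed when those groups vanish outright; unwinding the proof of Remark \ref{Uniqueness} under this stronger hypothesis yields the homotopy on the nose, mirroring the strengthening of Theorem \ref{AlmostSurjective} recorded in Remark \ref{SubrepresentationsExtend}. The auxiliary stabilization by $\tilde\sigma$ plays only the subsidiary role of placing $\rho+k\sigma$ in the range where Proposition \ref{NaturalIsomorphism} applies, so that the obstruction functors can be identified with $R(-,\rho+k\sigma)\pcom$ and Proposition \ref{BoundedLimits} can be invoked.
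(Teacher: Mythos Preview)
Your proposal is correct and follows essentially the same route as the paper. Both arguments write an arbitrary $\chi$ as a difference of $\Ff$-invariant representations, embed the summands in a single faithful $\Ff$-invariant $\tau$ (or $\rho$), and invoke Remark \ref{SubrepresentationsExtend} to lift $k\tau \oplus \rho_{\pm}$ and $k\tau$ for large $k$; the injectivity argument likewise adds a lift of a large multiple of a faithful representation and uses that the uniqueness obstructions vanish when $l(\Or(\Ff^{cr}))<2$.

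Two small points of phrasing. First, in the injectivity step you write ``use the surjectivity just proved to produce a lift $\tilde\sigma$ of $k\sigma$'': surjectivity of $\Psi$ only gives a \emph{virtual} preimage in $\K(|\Ll|\pcom)$, whereas you need an honest map $\tilde\sigma\colon |\Ll|\pcom \to BU(M)\pcom$. This actual lift comes directly from Remark \ref{SubrepresentationsExtend} (applied to $\sigma$ itself), not from the surjectivity statement for $\Psi$; the paper invokes that remark at this point as well. Second, your justification that $f_1|_{BS}$ and $f_2|_{BS}$ are isomorphic is simply that two honest representations equal in the Grothendieck group are isomorphic; the remark about generation by irreducibles is not needed. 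Neither point affects the validity of the argument.
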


\begin{proof}
Let us assume $l(\Or(\Ff^{cr}))<3$ and let $\chi \in R^{\Ff}(S)$. Then $\chi
= \alpha_1 - \alpha_2$, where $\alpha_i$ is an $\Ff$-invariant. By Proposition
\ref{DirectSummand}, there exist faithful $\Ff$-invariant representations $\rho_i$ 
such that $\alpha_i$ is a subrepresentation of $\rho_i$. 
Let $\rho = \rho_1 \oplus \rho_2$. By Theorem \ref{AlmostSurjective} and Remark 
\ref{SubrepresentationsExtend}, there is an
integer $M_0$ such that $\alpha_i \oplus M \rho $ and $M \rho$ belong to the image 
of $\Psi$ if $M \geq M_0$. Then
\[ \alpha_1 - \alpha_2 = (\alpha_1 + M_0 \rho) - (\alpha_2 + 2M_0 \rho) + M_0 \rho \]
belongs to the image of $\psi$.

Now let $l(\Or(\Ff^{cr})) < 2 $ and consider $f$, $g \colon |\Ll| \pcom \to BU(n) \pcom $ such
that $ f _{|BS} \simeq g_{|BS} \simeq B\alpha$, where $\alpha$ is some $\Ff$-invariant representation
of $S$. By Proposition \ref{DirectSummand}, there is a faithful $\Ff$-invariant representation $\rho$ such
that $\alpha$ is a subrepresentation of $\rho$. By Remark \ref{SubrepresentationsExtend}, there is an
integer $N$ such $ \alpha \oplus N \rho $ and $N \rho$ belong to the image of $\Psi$ 
and have a unique preimage. Let $ N\rho = \Psi(h) $. Then $\Psi(f+h)=\Psi(g+h) = \alpha \oplus N\rho$, 
from where $ f + h \simeq g + h $ and so $f = g $ in $\K(|\Ll| \pcom)$.
\end{proof}

\begin{remark}
This theorem applies to the $p$-local compact groups
which model the Clark-Ewing or the Aguad\'e-Zabrodsky $p$-compact groups
and to the exotic $3$-local compact groups of \cite{G}. 
\end{remark}

%%%%%%%%%%%%%%%%%%%%%%%%%%%%%%%%%%%%%%%%%%%%%%%%%%
%%%%%%%%%%%%%%%%%%% REFERENCES %%%%%%%%%%%%%%%%%%%
%%%%%%%%%%%%%%%%%%%%%%%%%%%%%%%%%%%%%%%%%%%%%%%%%%

\bibliographystyle{plainnat}

%%%%%%%%%%%%%%%%%%%%%%%%%%%%%%%%%%%%%%

\end{document}